\providecommand{\U}[1]{\protect\rule{.1in}{.1in}}
\newtheorem{theorem}{Theorem}
\newtheorem{condition}[theorem]{Condition}
\newtheorem{corollary}[theorem]{Corollary}
\newtheorem{definition}[theorem]{Definition}
\newtheorem{lemma}[theorem]{Lemma}
\newtheorem{proposition}[theorem]{Proposition}
\newtheorem{remark}[theorem]{Remark}
\newenvironment{proof}[1][Proof]{\noindent\textbf{#1.} }{\ \rule{0.5em}{0.5em}}
\newcommand{\B}{\mathbb{B}}
\newcommand{\E}{\mathbb{E}}
\newcommand{\N}{\mathbb{N}}
	\renewcommand{\P}{\mathbb{P}}
\newcommand{\R}{\mathbb{R}}
\newcommand{\wh}{\widehat}
\newcommand{\wt}{\widetilde}
\newcommand{\ovl}{\overline}
\newcommand{\ep}{\epsilon}
\newcommand{\cA}{\mathcal{A}}
\newcommand{\cD}{\mathcal{D}}
\newcommand{\cF}{\mathcal{F}}
\newcommand{\cG}{\mathcal{G}}
\newcommand{\cL}{\mathcal{L}}
\newcommand{\cM}{\mathcal{M}}
\newcommand{\cN}{\mathcal{N}}
\newcommand{\bx}{\mathbf{x}}
\newcommand{\bp}{\mathbf{p}}
\newcommand{\bP}{\mathbf{P}}
\newcommand{\bE}{\mathbf{E}}
\begin{document}
\title{Coagulation dynamics under environmental noise: \\
Scaling limit to SPDE
\footnote{Keywords and phrases: scaling limits; coagulation dynamics; stochastic PDE; environmental noise; interacting diffusions; rainfall formation.} 
\footnote{AMS subject classification (2020): 60K35, 82C21, 60H15, 86A08.}}
\author{Franco Flandoli and Ruojun Huang}
\maketitle

\begin{abstract}
We prove that a system of locally interacting diffusions carrying discrete masses, subject to an environmental noise and undergoing mass coagulation, converges to a system of Stochastic Partial Differential Equations (SPDEs) with Smoluchowski-type nonlinearity. Existence, uniqueness and regularity of the SPDEs are also proven.
\end{abstract}

\section{Introduction}
Environmental noise in deterministic or stochastic interacting particle systems is a space-dependent noise acting on all particles, opposite to the more common independent noise for each particle. For particles in a fluid, the environmental noise may be an idealized description of a turbulent fluid. An example which motivates the model studied in the present work are the small rain droplets of which clouds are made. Droplets move in the cloud due to the force exerted on them by the surrounding turbulent air, and coagulate when they become sufficiently close to each other. We introduce a particle system modeling these two phenomena and investigate its scaling limit to a continuous density model, which is a stochastic Smoluchowski system. Opposite to independent noise for each particle which becomes a Laplacian in the scaling limit, the environmental noise yields a stochastic transport term in the continuous limit.
Raindrop formation in turbulent fluids has been studied in the Physics literature \cite{ST, FFS, Bode, PW}; our paper provides foundational results on the particle system viewpoint and its continuum limit to a Stochastic Partial Differential Equation (SPDE).

We model the individual rain droplets as diffusions on $\mathbb{R}^{d}$,
$d\geq1$, with a small molecular diffusivity, and subject to a
\textit{{common}} Stratonovich transport-type noise. Any pair of particles has
a propensity to coagulate into one, combining their masses, when their
positions get locally close to each other. Without the common noise, this is
in the spirit of classical Smoluchowski coagulation model which leads to his
famous PDEs \cite{Smo, Smo2}, mathematically derived before in the kinetic limit from interacting particle systems in \cite{LN, HR, HR3}. We prove in this work that as the total number of particles in our
system tends to infinity, the empirical measures as indexed by mass parameter,
converge to a system of SPDEs with the same Stratonovich transport-type noise.
From Stratonovich-to-It\^{o} correction, we obtain an extra second-order
divergence-form operator, the \textquotedblleft eddy
diffusion\textquotedblright. This opens the door to the investigation of
diffusion and coagulation enhancement, along the lines of \cite{Fla11,
Flandoli LNM, FGP, DFV, Ga, FL18, FGL} and references therein (also for other
independent works on mixing or diffusion enhancement). We stress that the
philosophy underlying the emergence of enhanced diffusion starting from a
transport-type noise and through a suitable scaling limit was first discovered
by Galeati \cite{Ga}. Numerical results are recently obtained for a closely
related coagulation model in \cite{AP}.

Given a filtered probability space $(\Omega,\cF,\{\cF_t\},\P)$, for each $N\in\N$ consider an interacting particle system in $\R^d$, $d\ge 1$, consisting initially of $N(0)=N$ particles. Each particle $i$ has a position $x_i^N(t)\in\R^d$ and carries an integer mass $m_i^N(t)$, which takes values in
\begin{align*}
\mathbb S:=\{1,2,...,M,\emptyset\}, 
\end{align*}
where $M\in\N$ is the largest possible mass, and $\emptyset$ is a fictitious element. The particles are subject to pairwise coagulation, at which time some particles may cease to be active in the system. So long as particle $i$ is still active (how the index set changes when a coagulation event happens will be explained below), its position  $x_i^N(t)$ obeys the SDE:
\begin{align}\label{SDE}
dx_i^N(t)=\sum_{k\in K}\sigma_k\left(x_i^N(t)\right)\,\circ\,dW^k_t+ \lambda \, d\beta_i(t),\quad i\in\cN(t)
\end{align}
where $\circ$ denotes Stratonovich integration, scalar $\lambda>0$, $\cN(t)\subset\{1,...,N\}$ denotes the set of indices of active particles at time $t$, whose cardinality $|\cN(t)|=N(t)\le N$, $K$ is a {\it{finite}} set, $\{W_t^k\}_{k\in K}$ is a given finite collection of independent standard Brownian motions in $\R$, and $\{\beta_i(t)\}_{i=1}^\infty$ are given independent standard Brownian motions in $\R^d$, $\{\sigma_k(x)\}_{k\in K}$ are given divergence-free vector field of class $C_b^\infty(\R^d;\R^d)$, where $C_b^\infty(\cdot)$ denotes the space of smooth functions with derivatives of all orders uniformly bounded. Following \cite{CF}, we call 
\begin{align*}
\frac{d\mathscr W}{dt}(t,x):=\sum_{k\in K}\sigma_k(x)\frac{dW^k_t}{dt}
\end{align*}
the environmental noise or common noise acting simultaneously on all particles. 
We denote the $d\times d$ spatial covariance matrix of $\mathscr W$ by
\begin{align*}
Q(x,y):=\sum_{k\in K}\sigma_k(x)\otimes\sigma_k(y).
\end{align*} 
As the set $K$ is finite, we do not assume $Q$ is spatially homogeneous as in some prior studies using such transport noise.

From Stratonovich to It\^o, \eqref{SDE} can be equivalently written as 
\begin{align*}
dx_i^N(t)=\sum_{k\in K}\sigma_k\left(x_i^N(t)\right)dW^k_t+\frac{1}{2}\sum_{k\in K}\left(\nabla\sigma_k\cdot\sigma_k\right)\left(x_i^N(t)\right)dt
+ \lambda\,  d\beta_i(t),\quad i\in\cN(t)
\end{align*}
where component-wise, 
\begin{align}\label{strat-corr}
\left(\nabla\sigma_k\cdot\sigma_k\right)^\alpha(x):=\sum_{\beta=1}^d\sigma_k^\beta(x)\partial_\beta\sigma_k^\alpha(x), \quad \alpha=1,...,d.
\end{align}

The initial masses $\{m_i(0)\}_{i=1}^N$ are chosen i.i.d. from $\{1,...,M\}$ with probability $r_m$ to be mass $m$, so that $\sum_{m=1}^M r_m=1$; and once the masses are determined, the distributions of $x_i(0), i=1,2,..$ are independent with density $p_{m_i(0)}(x), i=1,2,..$, for some given probability density functions $p_m(x): \R^d\to\R_+$, $m=1,...,M$, all of which satisfy
\begin{condition}\label{cond:init}
\begin{enumerate}
\item
They are compactly supported in the Euclidean ball $\B(0,R)$ of finite radius $R$ centered at the origin; 
\item
They are uniformly bounded above by some finite constant $\Gamma$, i.e. $\|p_m\|_\infty\le \Gamma$;
\item
There exists some integer $n>d/4$ such that
\[
p_m(x)\in W^{2n,2}(\R^d),
\]
where $W^{k,p}(\R^d)$ are standard Sobolev spaces.
\end{enumerate}
\end{condition}
Note that the initial conditions for different $N\in\N$ are naturally coupled together, hence we do not stress the dependence of $x_i(0), m_i(0)$ on $N$. 
The probability space is endowed with the canonical filtration
\[
\cF_t:=\sigma\left\{\{m_i(0)\}_{i=1}^\infty, \{x_i(0)\}_{i=1}^\infty,\{\beta_i(s)\}_{i=1}^\infty, \{W^k_s\}_{k\in K}: s\in[0,t]\right\}, \quad t\ge0.
\]

The interaction between particles is by means of coagulation of masses, heuristically described below. It was studied, without the environmental noise, by Hammond and Rezakhanlou in \cite{HR, HR3} (however the limit there is a system of PDEs instead of SPDEs, among other differences). Let $\theta:\R^d\to\R_+$ be nonnegative, of class $C^\alpha(\R^d)$ (the space of H\"older continuous functions on $\R^d$) for some $\alpha\in(0,1)$, compactly supported in $\B(0,C_0)$ for some finite constant $C_0$, with $\theta(0)=0$ and  $\int_{\R^d} \theta =1$, where we denote by $\B(x,r)$ the open Euclidean ball of radius $r$ centered at $x\in\R^d$. For every $\ep>0$, we denote 
\begin{align*}
\theta^\ep(x):=\ep^{-d}\theta(\ep^{-1}x). 
\end{align*} 
Throughout we impose the following relation between $N$ and $\ep$:
\begin{equation}\label{local}
\begin{aligned}
\ep=\ep(N)\to0 \quad &\text{as}\quad  N\to\infty, \\
\limsup_{N\to\infty}\frac{\ep^{1-d}}{N}<\infty, \quad d\ge 2; \quad &
\limsup_{N\to\infty}\frac{|\log\ep|}{N}<\infty, \quad d=1.
\end{aligned}
\end{equation}
This regime includes local interaction (when $\ep^{-d}\asymp N$), see discussion below, which is the scaling of interest for modelling raindrop formations. A configuration of the stochastic system is a finite string of positions and masses  (whose length is at most $2N$)
\begin{align*}
\eta=(x_1,x_2,...; m_1,m_2,..)\in (\R^d)^N\times \mathbb S^N.
\end{align*}
On top of individual diffusions \eqref{SDE}, we impose the following rule of mass coagulation. Suppose the current configuration is $\eta$. For each pair of particles $(i,j)$ active in the system (i.e. $m_i,m_j\neq \emptyset$), with positions $(x_i,x_j)$ and masses $(m_i,m_j)$, where $i\neq j$ each ranges over the index set, with a rate 
\begin{align}\label{range}
\frac{1}{N}\theta^\ep(x_i-x_j)
\end{align}
we remove both particles from the system, and then add a new particle with mass $(m_i+m_j)$ at either $x_i$ or $x_j$ chosen with probability $\frac{m_i}{m_i+m_j}$ for the former (and $\frac{m_j}{m_i+m_j}$ for the latter). However, we do this only if $m_i+m_j\le M$, otherwise after the pair is removed, no new particle is added to the system, or equivalently we add a fictitious particle with mass $\emptyset$ at the origin. We denote the new configuration obtained from $\eta$ this way by $S_{i,j}^1\eta$, and respectively $S_{i,j}^2\eta$. For labelling purposes, in the new configuration $S_{i,j}^1\eta$, we call particle $i$ the new mass-combined particle found at position $x_i$, whereas there is no longer a particle $j$ in the system; and analogously for $S_{i,j}^2\eta$. We will only be tracking the empirical measures of particles with mass $\le M$. The interpretation is that, in terms of raindrop formations, when the mass of a raindrop exceeds a certain threshold, it falls. 

\medskip
For $N$ diffusion particles evolving in a unit-order volume in $\R^d$, the typical inter-particle distance is on the order of $N^{-1/d}$. With $\ep$ representing the typical length scale over which pairs of particles can interact, see \eqref{range}, the relation \eqref{local} includes the most relevant case $\ep\asymp N^{-1/d}$ for our modelling of raindrops formation, namely when the interaction is so-called ``local''. In such a regime, each particle typically interacts with a bounded number of others at any given time, which is an analog of the nearest-neighbor, or finite-range, interactions widely studied in the discrete setting, e.g. on lattices \cite{DMP, KL}. If $\ep^{-d}\ll N$, then the interaction is no longer local, but more spread-out, in particular when $\ep$ is independent of $N$ it is the ``mean-field'' case, and the regime between mean-field and local we can term ``moderate'' following Oelschläger. In those regimes, each particle interacts with a diverging (in $N$) number of others, see \cite{FLO, FLR, FH} for more discussion on local versus moderate or mean-field interactions for diffusion systems. In the latter cases techniques are much more developed, see e.g. \cite{Oe85, Oe, Sz, MC, JM, FLO}. Also in those regimes, particle systems subject to environmental noise, with even ``singular'' interactions, have been studied, see \cite{CF, FL, GL} among others, where the interaction can occur in the drift of the SDEs and not just in auxiliary variables (like mass). Localizing the range of interaction for diffusions is a non-trivial task, in particular, in this work (as well as in \cite{FH}) we have to utilize some techiniques developed in \cite{HR, HR3} in the spirit of the classical It\^o-Tanaka trick, to handle the convergence of the nonlinear terms, see Section \ref{ito-tanaka}. In those papers \cite{HR, HR3}, a system of Smochulowski PDEs is rigorously derived from interacting particles in the so-called ``mean-free path'' regime, which corresponds to diluted gases (not covered by our result). Some of the other classical works on diffusions with local interactions (that occur in the drift) are \cite{Var, OV, OVY, Uch}.

Now we can formally give the infinitesimal generator of the system 
\begin{align*}
\cL^NF(\eta):=\cL^N_DF(\eta)+\cL^N_JF(\eta)
\end{align*}
where the first, diffusion part of the generator is
\begin{equation}\label{diff-gen}
\begin{aligned}
&\cL^N_DF(\eta)\\
&:=\frac{\lambda^2}{2}\sum_{i\in\cN(\eta)}\Delta_{x_i} F(\eta)+\frac{1}{2}\sum_{i,j\in\cN(\eta)}\sum_{\alpha,\beta=1}^d\frac{\partial^2F}{\partial x_i^\alpha\partial x_j^\beta}(\eta)\sum_{k\in K}\sigma_k^\alpha(x_i)\sigma_k^\beta(x_j)+\frac{1}{2}\sum_{i\in\cN(\eta)}\sum_{\alpha,\beta=1}^d\partial_{x_i^\alpha}F(\eta)\sum_{k\in K}\sigma_k^\beta(x_i)\partial_\beta\sigma_k^\alpha(x_i)\\
&=\frac{\lambda^2}{2}\sum_{i\in\cN(\eta)}\Delta_{x_i} F(\eta)+\frac{1}{2}\sum_{i,j\in\cN(\eta)}\sum_{\alpha,\beta=1}^d\frac{\partial^2F}{\partial x_i^\alpha\partial x_j^\beta}(\eta)Q^{\alpha\beta}(x_i,x_j)+\frac{1}{2}\sum_{i\in\cN(\eta)}\sum_{\alpha,\beta=1}^d\partial_{x_i^\alpha}F(\eta)\partial_{x_i^\beta}\left(Q^{\alpha\beta}(x_i,x_i)\right)
\end{aligned}
\end{equation}
using $\operatorname{div}\sigma_k(x)=0$ in the last line, 
and $\cN(\eta)$ denotes the set of indices of active particles in configuration $\eta$; and the second, jump (or coagulation) part of the generator is
\begin{align}\label{jump-gen}
\cL^N_JF(\eta):=\frac{1}{N}\sum_{i\neq j\in\cN(\eta)}\theta^\ep(x_i-x_j)\left[\frac{m_i}{m_i+m_j}F(S_{i,j}^1\eta)+\frac{m_j}{m_i+m_j}F(S_{i,j}^2\eta)-F(\eta)\right].
\end{align}
We henceforth denote by $\eta(t)$ the random configuration at time $t$. Note that if at time $t$, a coagulation event happens for some pair of particles, the cardinality of active particles $N(t)$ decreases either by one or by two.

For each $N\in\N$ and $1\le m\le M$, we denote the empirical measure of mass-$m$ particles
\begin{align}\label{empirical}
\mu^{N,m}_t(dx):=\frac{1}{N}\sum_{i\in\cN(t)}\delta_{x_i^N(t)}(dx)1_{\{m_i^N(t)=m\}}, \quad t\ge 0.
\end{align}
For each $t$, these are nonnegative random measures on $\R^d$ with total mass bounded above by $1$. Denote by $\cM_{+,1}(\R^d)$ the space of subprobability measures on $\R^d$ endowed with the weak topology, and by $\cD([0,T];\cM_{+,1}(\R^d))\equiv \cD_T(\cM_{+,1})$ the space of c\`adl\`ag functions taking values in $\cM_{+,1}(\R^d)$, endowed with the Skorohod topology. Then, $\{\mu^{N,m}_t:t\in[0,T]\}_{m\le M}$ are $\cD_T(\cM_{+,1})^M$-valued random variables.

Let us also introduce a finite system of SPDEs
\begin{align}\label{spde}
\begin{cases}
d u_m(t,x)&=\frac{1}{2}\lambda^2\Delta u_m(t,x)dt+\sum_{n=1}^{m-1}u_n(t,x)u_{m-n}(t,x)dt-2\sum_{n=1}^Mu_n(t,x)u_m(t,x)dt\\\\
&+\frac{1}{2}\operatorname{div}\left(Q(x,x)\nabla u_m(t,x)\right)dt
+\sum_{k\in K}\sigma_k(x)\cdot \nabla u_m(t,x) \; dW_t^k,\quad (t,x)\in[0,T]\times\R^d, \\\\
u_m(0,x)&=r_mp_m(x), \quad m=1,...,M
\end{cases}
\end{align}
where by convention, when $m=1$, set $\sum_{n=1}^{m-1}[...]=0$. For our notion of (analytically) weak solutions of the system \eqref{spde}, see Definition \ref{def:spde-w}. The SPDE can be equivalently written as 
\begin{align*}
\begin{cases}
d u_m(t,x)&=\frac{1}{2}\lambda^2\Delta u_m(t,x)dt+\sum_{n=1}^{m-1}u_n(t,x)u_{m-n}(t,x)dt-2\sum_{n=1}^Mu_n(t,x)u_m(t,x)dt\\\\
&+\sum_{k\in K}\sigma_k(x)\cdot \nabla u_m(t,x) \,\circ\, dW_t^k,\quad (t,x)\in[0,T]\times\R^d, \\\\
u_m(0,x)&=r_mp_m(x), \quad m=1,...,M
\end{cases}
\end{align*}
since by $\operatorname{div}\sigma_k(x)=0$, the Stratonovich-to-It\^o correction takes the form
\begin{align*}
&\frac{1}{2}\sum_{k\in K}\sigma_k(x)\cdot\nabla\left(\sigma_k(x)\cdot\nabla u_m(t,x)\right)dt=\frac{1}{2}\sum_{k\in K}\sum_{\alpha,\beta=1}^d\sigma_k^\alpha(x)\partial_\alpha\left(\sigma_k^\beta(x)\partial_\beta u_m(t,x)\right)dt\\
&=\frac{1}{2}\sum_{k\in K}\sum_{\alpha,\beta=1}^d\partial_\alpha\left(\sigma_k^\alpha(x)\sigma_k^\beta(x)\partial_\beta u_m(t,x)\right)dt
=\frac{1}{2}\operatorname{div}\left(Q(x,x)\nabla u_m(t,x)\right)dt.
\end{align*}

The main result of the article is the following.
\begin{theorem}
Assume that $N$ and $\ep$ satisfy \eqref{local}, and Condition \ref{cond:init} holds. Then, for every finite $T$ and $d\ge 1$, the empirical measure of the particle system $\{\mu^{N,m}_t(dx):t\in[0,T]\}_{m\le M}$ defined in \eqref{empirical} converges in probability as $N\to\infty$, in the space $\cD_T(\cM_{+,1})^M$ to a limit $\{\ovl \mu_t^m(dx):t\in[0,T]\}_{m\le M}$. The latter random measure is absolutely continuous with respect to Lebesgue measure, with a uniformly bounded density $\{u_m(t,x): t\in[0,T], x\in\R^d\}_{m\le M}$ that is the pathwise unique weak solution to the SPDE \eqref{spde}.
\end{theorem}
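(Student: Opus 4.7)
The plan is to follow the standard three-step scheme for scaling limits of interacting diffusions: derive a semimartingale decomposition for the empirical measures tested against smooth functions, prove tightness of $\{\mu^{N,m}\}_N$ in $\cD_T(\cM_{+,1})^M$, identify every subsequential limit as a weak solution of the SPDE \eqref{spde}, and then invoke pathwise uniqueness for \eqref{spde} (proved elsewhere in the paper) to upgrade distributional convergence of subsequences to convergence in probability of the full sequence. The last step is legitimate because the particle system and the SPDE are both driven by the same environmental Brownian motions $\{W^k\}_{k\in K}$, so a Gy\"ongy--Krylov-type argument applies to the joint law of pairs of subsequences.

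For $\phi\in C_c^\infty(\R^d)$, Dynkin's formula applied to $F_{m,\phi}(\eta):=\frac{1}{N}\sum_{i\in\cN(\eta)}\phi(x_i)1_{\{m_i=m\}}$, together with $\operatorname{div}\sigma_k=0$, yields a decomposition
\[
\langle\mu^{N,m}_t,\phi\rangle=\langle\mu^{N,m}_0,\phi\rangle+\int_0^t\langle\mu^{N,m}_s,L_0\phi\rangle\,ds+J^{N,m}_t(\phi)+M^{N,m,1}_t(\phi)+M^{N,m,2}_t(\phi),
\]
where $L_0\phi:=\tfrac{\lambda^2}{2}\Delta\phi+\tfrac{1}{2}\operatorname{div}(Q(x,x)\nabla\phi)$, $J^{N,m}(\phi)$ is the coagulation drift involving pair sums weighted by $\theta^\ep(x_i-x_j)$, $M^{N,m,1}(\phi)$ is the continuous common-noise martingale with bracket $\int_0^t\sum_k\langle\mu^{N,m}_s,\sigma_k\cdot\nabla\phi\rangle^2\,ds$, and $M^{N,m,2}(\phi)$ is the independent-noise martingale with bracket $O(N^{-1})$. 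Since the total mass of $\mu^{N,m}_t$ is bounded by $1$, uniform second-moment control on the coagulation drift, via an a priori $L^\infty$ estimate on the one-particle densities propagated by the parabolic part of $\cL^N$, combined with Aldous' criterion, gives tightness.

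The main obstacle is identifying the limit of the nonlinear coagulation drift: weak convergence of $\mu^{N,m}$ does not permit passing to the limit in terms of the form $\frac{1}{N^2}\sum_{i\ne j}\theta^\ep(x_i-x_j)\phi(x_i)1_{\{m_i=n,m_j=m-n\}}$, because in the local regime \eqref{local} the kernel $\theta^\ep$ degenerates to a Dirac mass at $0$. I would adapt the It\^o--Tanaka-type trick of \cite{HR,HR3} alluded to in the reference to Section \ref{ito-tanaka}: introduce an auxiliary potential $\Phi^{\ep,\ep_0}$ satisfying $\Delta\Phi^{\ep,\ep_0}=\theta^\ep-\theta^{\ep_0}$ for a fixed mollifier scale $\ep_0>0$, apply It\^o's formula to $\frac{1}{N^2}\sum_{i\ne j}\Phi^{\ep,\ep_0}(x_i-x_j)1_{\{m_i=n,m_j=m-n\}}$, and thereby trade the singular kernel $\theta^\ep$ for the smooth kernel $\theta^{\ep_0}$ modulo martingale and error terms that are controlled through the $W^{2n,2}$ regularity of the one-particle densities guaranteed by Condition \ref{cond:init}. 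The twist relative to \cite{HR,HR3} is that the common noise modifies the cross-variation of $x_i-x_j$ by a smooth term $Q(x_i,x_i)+Q(x_j,x_j)-Q(x_i,x_j)-Q(x_j,x_i)$ that vanishes on the diagonal $\{x_i=x_j\}$, so the ellipticity at contact is still $2\lambda^2 I$ and the estimate closes as before. Sending $N\to\infty$ first and $\ep_0\to 0$ second recovers the Smoluchowski-type product $\int\phi(x)u_n(s,x)u_{m-n}(s,x)\,dx$.

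Once the coagulation drift is identified, a martingale-CLT / Kurtz criterion argument shows that $M^{N,m,1}(\phi)$ converges in law to $\sum_k\int_0^t\langle u_m(s)\sigma_k,\nabla\phi\rangle\,dW^k_s$ while $M^{N,m,2}(\phi)\to 0$, so every subsequential limit is a weak solution of \eqref{spde}; absolute continuity and the uniform $L^\infty$ bound on $u_m$ follow by weak-$*$ lower-semicontinuity from the corresponding bounds on the particle densities. Pathwise uniqueness for \eqref{spde} then identifies all subsequential limits as the same measurable functional of $\{W^k\}_{k\in K}$, completing the convergence in probability in $\cD_T(\cM_{+,1})^M$.
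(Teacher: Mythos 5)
Your overall architecture (semimartingale decomposition, tightness, identification of subsequential limits as weak solutions via an It\^o--Tanaka argument for the coagulation drift, then pathwise uniqueness plus Gy\"ongy--Krylov) matches the paper's. Your It\^o--Tanaka step is a legitimate variant: you propose an \emph{elliptic} potential in the difference variable, $\Delta\Phi^{\ep,\ep_0}=\theta^\ep-\theta^{\ep_0}$, arguing that the common-noise contribution to the generator of $x_i-x_j$, namely $\tfrac12\mathrm{tr}\bigl[(Q(x,x)+Q(y,y)-Q(x,y)-Q(y,x))D^2\bigr]$, degenerates on the diagonal and is therefore harmless. The paper instead solves a \emph{parabolic} terminal-value problem \eqref{aux-pde} in $\R^{2d}$ for the full pair generator $\cA_\bx$ (including $\wh Q$), so that $H_1+H_3$ reproduces $-\theta^\ep$ exactly and no remainder from the common noise appears; the price is heat-kernel estimates for a variable-coefficient operator (Proposition \ref{bounds}). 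Your route would need to verify that the cross terms $|D^2\Phi^{\ep,\ep_0}(w)|\cdot|w|^2$ and the Stratonovich first-order corrections are summable over pairs in the local regime \eqref{local}; this is plausible but not automatic, and you do not supply the estimates.

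The genuine gap is the absolute continuity and the uniform bound $\|u_m\|_\infty\le\Gamma$ of the limit density. You assert these follow ``by weak-$*$ lower-semicontinuity from the corresponding bounds on the particle densities,'' but the empirical measures $\mu^{N,m}_t$ are purely atomic and carry no densities, so there is nothing to pass to the limit by lower semicontinuity. Nor does a bound on the \emph{one-particle marginal} density suffice: because of the common noise, the limit $\ovl\mu^m_t$ is a \emph{random} measure (morally the conditional law given $\{W^k\}$), not the unconditional one-particle law, so boundedness of the marginal density does not transfer. The paper's mechanism is essential and nontrivial: one couples the true system with a ``free'' system \eqref{SDE-free} without coagulation driven by the same noises, obtains the pathwise domination $\mu^{N,m}_t\le\mu^{N,f}_t$ from \eqref{lifespan} (this is precisely why the coefficients may not depend on the mass), shows that $\mu^{N,f}$ converges to the unique measure-valued solution of the linear transport SPDE \eqref{equation for measures}, and proves via $W^{2n,2}$ regularity theory and a maximum-principle argument (Theorem \ref{thm:reg-free} and Lemma \ref{lemma uniform upper bound}) that this solution has a density bounded by $\Gamma$; the domination then transfers the bound to each $\ovl\mu^m$. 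This occupies Sections \ref{exist-density}--\ref{sec:free} of the paper and also feeds into the identification step (the $L^\infty$ bound is used in Lemma \ref{lem:nonlin}), so your proof cannot close without an equivalent argument. Your tightness and martingale-convergence steps, and the final Gy\"ongy--Krylov upgrade, are consistent with the paper.
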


Besides pathwise uniqueness (Corollary \ref{Coroll uniqueness}), we also obtain regularity results for solutions of such nonlinear SPDEs \eqref{spde}, see Proposition \ref{Proposition regularity nonlinear}, that may be of independent interest.

\medskip
Regarding some of our assumptions, we have the following remarks.
\begin{remark}
(a). We have assumed $\sigma_{k}(x)\in C_{b}^{\infty}(\mathbb{R}%
^{d};\mathbb{R}^{d})$ for $k\in K$ to facilitate the nontrivial SPDE arguments
in Section \ref{sec:free}. While we did not try to optimize the regularity and
perhaps there exists a different proof that imposes less regularity (see some
remarks on this issue in Section \ref{sec:free}), we believe that our
arguments herein are of independent interest.

(b). The reason we chose finitely many mass levels (instead of infinite, i.e. $M=\infty$), is in order to have a finite system of SPDEs \eqref{spde} in the limit, whose pathwise uniqueness of solution can be proved, see Section \ref{unique}. Note that $M=\infty$ is assumed in \cite{HR, HR3}, with uniqueness for their PDE system proved in \cite{HR2} under appropriate assumptions.

(c). The assumption $\theta(0)=0$ seems to be necessary in order to rewrite certain quantity by means of the empirical measure, see \eqref{theta-redef}. A potential criticism is that it is unrealistic: $\theta(0)=0$ and smooth implies $\theta(\cdot)$ is very small near the origin, hence two particles which are ``too close'' to each other have also little rate to coagulate. However, ``physics'' is saved since: (i) the region, call it $\B(0,\iota\ep)$, where $\theta^\ep(\cdot)$ is very small near the origin can be extremely small compared to $\B(0,C_0\ep)$, and (ii) before getting $(\iota\ep)$-close, two particles have to be $(C_0\ep)$-close for a while, hence the loss of rate in $\B(0,\iota\ep)$ is practically not influential.

(d). Our dynamics \eqref{SDE} is less general than the setting of \cite{HR, HR3} in that the coefficients $\sigma_k$ and $\lambda$ do not depend on the mass parameter $m_i^N$. This is for technical reasons, with the need to couple with an auxiliary free system in Sections \ref{sec:bounds} and \ref{exist-density}, notably Proposition \ref{bdd-density}. How to incorporate the additional mass dependence in \eqref{SDE} is an open problem.
\end{remark}

The strategy of proof is summarised as follows. In Section \ref{tightness}, we show that the sequence of probability laws $\{\mathscr P^N\}_N$ induced by the $\cD_T(\cM_{+,1})^M$-valued random variables
\begin{align*}
\big\{\mu^{N,m}_t(dx): t\in[0,T]\big\}_{m\le M}, \quad N\in\N
\end{align*}
is tight hence weakly relatively compact. Fix any weak subsequential limit $\ovl{\mathscr P}$ of $\{\mathscr P^{N_j}\}_{j\in\N}$ along a subsequence $N_j$. By Skorohod's representation theorem, we can construct on an auxiliary probability space $(\wh\Omega, \wh\cF, \bP)$ (that depends on the subsequence) random variables $\{\wh\mu^{N_j,m} \}_{m\le M}$, $j\ge 1$, and $\{\ovl \mu ^m\}_{m\le M}$, having the laws $\mathscr P^{N_j}$, $j\ge 1$, and $\ovl{\mathscr P}$, respectively, such that  $\bP$-a.s.
\[
\{\wh\mu^{N_j,m} \}_{m\le M} \to \{\ovl \mu ^m\}_{m\le M}, \quad j\to\infty.
\]
In Sections \ref{exist-density}-\ref{sec:free}, we show that any subsequential limit measure $\{\ovl \mu ^m_t(dx): t\in[0,T]\}_{m\le M}$ is supported on the subset of measures that are absolutely continuous with respect to Lebesgue measure, with density
\begin{align*}
\left\{u_m(t,x),t\in[0,T]\right\}_{m\le M}  
\end{align*}
uniformly bounded by the deterministic constant $\Gamma$ in Condition \ref{cond:init}. This is achieved by considering an auxiliary (``free'') particle system without mass-coagulation, that dominates our true system, and studying the regularity and boundedness of solutions of its associated SPDE \eqref{equation for measures}, see in particular Theorem \ref{thm:reg-free} and Lemma \ref{lemma uniform upper bound}. In Sections \ref{sec:emp-mea}-\ref{sec:bounds}, we show that on $(\wh\Omega, \wh\cF, \bP)$, there exist a finite collection of independent Brownian motions $(\ovl W_t^k)_{k\in K}$ such that $\left(\{u_m\}_{m\le M}, (\ovl W^k)_{k\in K}\right)$ is a weak solution to the finite system of SPDEs \eqref{spde}. Here we need to deal with the difficulty posed by local interaction.

In Section \ref{unique} we show that (analytically) weak solutions to \eqref{spde} are pathwise unique. By a well-known theorem of Gyöngy-Krylov \cite[Lemma 1.1, Theorem 2.4]{Gyon Krylov}, see also \cite[Chapter 2]{Flandoli LNM}, \cite[Appendix C]{FGP}, having pathwise uniqueness, one can strengthen the convergence in law along subsequences to convergence in probability along the full sequence, of $\{\mu^{N,m}\}_{m\in M}$, $N\in\N$, on the original probability space $(\Omega, \cF, \P)$. The details involve representing two copies of the empirical measures by Skorohod's representation, and showing that any subsequential limit pair is concentrated on the diagonal. As this is similar to the Skorohod argument we already elaborate in Section \ref{sec:emp-mea} as well as those in the above references, we omit the proof of the Gyöngy-Krylov step.

\section{Identity involving the empirical measure}\label{sec:emp-mea}
We start by deriving an identity involving the empirical measure $\{\mu_t^m(dx)\}_{m\le M}$. Taking any $\phi\in C_c^\infty(\R^d)$, we consider the functional 
\begin{align*}
F_1(\eta):=\frac{1}{N}\sum_{i\in\cN(\eta)}\phi(x_i)1_{\{m_i=m\}}, 
\end{align*}
for any fixed $1\le m\le M$. Applying It\^o formula to the process 
\begin{align*}
F_1(\eta(t))=\frac{1}{N}\sum_{i\in\cN(t)}\phi(x_i^N(t))1_{\{m_i^N(t)=m\}}=\left\langle \phi(x), \mu^{N,m}_t(dx)\right\rangle, \quad t\ge0
\end{align*}
where the notation $\langle f, \nu\rangle$ denotes integrating a function $f$ against a measure $\nu$,
in view of \eqref{diff-gen}-\eqref{jump-gen}, we get that for every finite $T$,
\begin{equation*}
\begin{aligned}
\left\langle \phi(x), \mu^{N,m}_T(dx)\right\rangle 
&= \left\langle \phi(x), \mu^{N,m}_0(dx)\right\rangle
+\int_0^Tdt\frac{\lambda^2}{2N}\sum_{i\in\cN(t)}\Delta \phi(x_i^N(t))1_{\{m_i^N(t)=m\}}\\
&+\int_0^Tdt\frac{1}{2N}\sum_{i\in\cN(t)}\sum_{\alpha,\beta=1}^dQ^{\alpha\beta}\left(x_i^N(t),x_i^N(t)\right)\left(\partial^2_{\alpha\beta}\phi\right)(x_i^N(t))1_{\{m_i^N(t)=m\}}\\
&+\int_0^Tdt\frac{1}{2N}\sum_{i\in\cN(t)}\sum_{\alpha,\beta=1}^d\partial_\alpha\phi\left(x_i^N(t)\right)\partial_\beta\left(Q^{\alpha\beta}\left(x_i^N(t),x_i^N(t)\right)\right)1_{\{m_i^N(t)=m\}}\\
&+\int_0^Tdt \frac{1}{N}\sum_{i\neq j\in\cN(t)}\frac{1}{N}\theta^\ep(x_i^N(t)-x_j^N(t))\Big[\frac{m_i^N(t)}{m_i^N(t)+m_j^N(t)}\phi(x_i^N(t))1_{\{m_i^N(t)+m_j^N(t)=m\}}\\
&\qquad +\frac{m_j^N(t)}{m_i^N(t)+m_j^N(t)}\phi(x_j^N(t))1_{\{m_i^N(t)+m_j^N(t)=m\}} -\phi(x_i^N(t))1_{\{m_i^N(t)=m\}}-\phi(x_j^N(t))1_{\{m_j^N(t)=m\}}\Big]\\
& +M^{1,D,\phi}_T + M^{2,D,\phi}_T+ M_T^{J,\phi}.
\end{aligned}
\end{equation*}
Since for every $\alpha,\beta=1,...,d$,
\[
Q^{\alpha\beta}\left(x,x\right)\left(\partial^2_{\alpha\beta}\phi\right)(x)+\partial_\alpha\phi\left(x\right)\partial_\beta\left(Q^{\alpha\beta}\left(x,x\right)\right)=\partial_\beta\left(Q^{\alpha\beta}\left(x,x\right)\partial_\alpha\phi(x)\right),
\]
the above identity can be written as
\begin{equation}\label{key-identity}
\begin{aligned}
\left\langle \phi(x), \mu^{N,m}_T(dx)\right\rangle 
= & \left\langle \phi(x), \mu^{N,m}_0(dx)\right\rangle + 
\int_0^T\left\langle \mu_t^{N,m}(dx), \frac{\lambda^2}{2}\Delta \phi(x)\right\rangle dt\\
&+\int_0^T\left\langle\mu^{N,m}_t(dx),\frac{1}{2}\operatorname{div}\left(Q(x,x)\nabla \phi(x)\right)\right\rangle dt\\
&+\int_0^Tdt \sum_{n=1}^{m-1}\frac{1}{N^2}\sum_{i\neq j\in\cN(t)}\theta^\ep(x_i^N(t)-x_j^N(t))\frac{n}{m}\phi(x_i^N(t))1_{\{m_i^N(t)=n\}}1_{\{m_j^N(t)=m-n\}}\\
&+\int_0^Tdt \sum_{n=1}^{m-1}\frac{1}{N^2}\sum_{i\neq j\in\cN(t)}\theta^\ep(x_i^N(t)-x_j^N(t))\frac{m-n}{m}\phi(x_j^N(t))1_{\{m_i^N(t)=n\}}1_{\{m_j^N(t)=m-n\}}\\
&-\int_0^Tdt \sum_{n=1}^{M}\frac{1}{N^2}\sum_{i\neq j\in\cN(t)}\theta^\ep(x_i^N(t)-x_j^N(t))\phi(x_i^N(t))1_{\{m_i^N(t)=m\}}1_{\{m_j^N(t)=n\}}\\
&-\int_0^Tdt \sum_{n=1}^{M}\frac{1}{N^2}\sum_{i\neq j\in\cN(t)}\theta^\ep(x_i^N(t)-x_j^N(t))\phi(x_j^N(t))1_{\{m_j^N(t)=m\}}1_{\{m_i^N(t)=n\}}\\
&+M_T^{1,D,\phi}+M_T^{2,D,\phi}+M_T^{J,\phi}
\end{aligned}
\end{equation}
where $\{M^{J,\phi}_t\}_{t\ge0}$ denotes the martingale associated with jumps (which we do not write out explicitly), and there are two martingales associated with diffusions
\begin{align*}
M_T^{1,D,\phi}&:=\int_0^T\frac{\lambda}{N}\sum_{i\in\cN(t)}\nabla \phi(x_i^N(t))1_{\{m_i^N(t)=m\}}\cdot d\beta_i(t)\\
M_T^{2,D,\phi}&:=\int_0^T\sum_{k\in K}\sigma_k(x_i^N(t))\cdot \frac{1}{N}\sum_{i\in\cN(t)}\nabla \phi(x_i^N(t))1_{\{m_i^N(t)=m\}}dW_t^k  \\
&=\int_0^T \sum_{k\in K}\left\langle \mu_t^{N,m}(dx),\sigma_k(x)\cdot\nabla\phi(x)\right\rangle dW_t^k.
\end{align*}
Firstly, by It\^o isometry and the independence among $\beta_i(t)$, we have that 
\begin{align}\label{mg-van-1}
\E\left[\sup_{t\in[0,T]}\left|M_t^{1,D,\phi}\right|^2\right]&\le4 \E\left[\int_0^Tdt\frac{\lambda^2}{N^2}\sum_{i\in\cN(t)} |\nabla\phi(x_i^N(t))|^21_{\{m_i^N(t)=m\}}dt\right]\le \frac{C\left(\|\phi\|_{C^1}, T\right)}{N}.
\end{align}
The second diffusion martingale $M^{2,D,\phi}$ is not negligible, and is responsible for the stochastic term in the limit SPDE.
Secondly, we bound the jump martingale by (cf. \cite[Proposition 8.7]{DN})
\begin{align}
&\E\left[\sup_{t\in[0,T]}\left|M_t^{J,\phi}\right|^2\right]  \nonumber\\
&\le 4\E \int_0^Tdt \frac{1}{N^2}\sum_{i\neq j\in\cN(t)}\frac{1}{N}\theta^\ep(x_i^N(t)-x_j^N(t))\Big[\frac{m_i^N(t)}{m}\phi(x_i^N(t))1_{\{m_i^N(t)+m_j^N(t)=m\}}+\frac{m_j^N(t)}{m}\phi(x_j^N(t))1_{\{m_i^N(t)+m_j^N(t)=m\}}  \nonumber\\
&\quad -\phi(x_i^N(t))1_{\{m_i^N(t)=m\}}- \phi(x_j^N(t))1_{\{m_j^N(t)=m\}}\Big]^2 \nonumber\\
&\le 64\|\phi\|^2_\infty\E\Big[\int_0^Tdt\frac{1}{N^3}\sum_{i\neq j\in\cN(t)}\theta^\ep(x_i^N(t)-x_j^N(t))\Big]\le \frac{C(\|\phi\|_\infty)}{N}, \label{mg-van-2}
\end{align}
by Lemma \ref{cardinality} below. 
\begin{lemma}\label{cardinality}
We have that 
\begin{align*}
\E\int_0^Tdt\sum_{i\neq j\in\cN(t)}\theta^\ep(x_i^N(t)-x_j^N(t))\le N^2.
\end{align*}
\end{lemma}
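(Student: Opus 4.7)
The plan is to relate the left-hand side to the expected number of coagulation events on $[0,T]$, which is bounded deterministically by the initial number of particles.

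Concretely, let $J_T$ denote the total number of coagulation events occurring in $[0,T]$ for the particle system. By the structure of the jump generator $\cL^N_J$ in \eqref{jump-gen}, each pair of indices $(i,j)$ with $i\ne j$ contributes to the instantaneous jump rate out of the current configuration with rate $\frac{1}{N}\theta^\ep(x_i^N(t)-x_j^N(t))$, regardless of whether the transition is of type $S^1_{i,j}$ or $S^2_{i,j}$ (the weights $\frac{m_i}{m_i+m_j}$ and $\frac{m_j}{m_i+m_j}$ simply sum to one). Hence the standard compensator identity for pure-jump Markov processes yields
\begin{equation*}
\E[J_T]=\frac{1}{N}\,\E\int_0^T\!\sum_{i\ne j\in\cN(t)}\theta^\ep\bigl(x_i^N(t)-x_j^N(t)\bigr)\,dt.
\end{equation*}

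The crucial geometric observation is that every coagulation event strictly decreases the cardinality of the active index set $\cN(t)$: in the case $m_i+m_j\le M$ the pair $\{i,j\}$ is replaced by a single combined particle, reducing $|\cN(t)|$ by one, while in the case $m_i+m_j>M$ both particles are removed, reducing $|\cN(t)|$ by two. Since $|\cN(0)|=N$ and $|\cN(t)|\ge 0$ for all $t$, we have the deterministic pathwise bound $J_T\le N$.

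Substituting this bound gives
\begin{equation*}
\E\int_0^T\!\sum_{i\ne j\in\cN(t)}\theta^\ep\bigl(x_i^N(t)-x_j^N(t)\bigr)\,dt=N\,\E[J_T]\le N^2,
\end{equation*}
which is the desired estimate. There is no real obstacle here; the only point requiring a little care is the compensator identity, which is justified either by the definition of the jump generator together with optional stopping applied to the martingale $J_t-\int_0^t\cL^N_J(|\cN|)(\eta(s))\,ds$ (suitably rewritten), or by simply applying Dynkin's formula to the functional $F(\eta)=|\cN(\eta)|$, which decreases by exactly one or two at each event.
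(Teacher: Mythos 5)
Your proof is correct and uses essentially the same argument as the paper: the paper applies Dynkin's formula directly to the cardinality process $N(t)=|\cN(t)|$, noting that the jump generator decreases it by at least one per event and $N(t)\ge 0$, while you phrase the same idea through the jump counter $J_T$ and its compensator, then bound $J_T\le N$ from the same monotonicity observation. The two formulations are equivalent and both hinge on the single key fact that each coagulation strictly reduces the active particle count from the initial value $N$.
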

\begin{proof}
We apply the generator to the process of cardinality $N(t)$ of active particles. The diffusion part of the generator does not affect $N(t)$, whereas the coagulation part decreases cardinality by either one or two (depending on if the combined mass exceeds $M$ or not). Hence, by It\^o formula and taking expectation, we have that 
\begin{align*}
\E N(T) \le \E N(0)-\E\int_0^Tdt\frac{1}{N}\sum_{i\neq j\in\cN(t)}\theta^\ep(x_i^N(t)-x_j^N(t)).
\end{align*}
Since $N(0)=N$, this completes the proof.
\end{proof}

\medskip
We observe that the middle four nonlinear terms of \eqref{key-identity} can also be written by means of empirical measure, since for every $m,n,t$,
\begin{equation}
\begin{aligned}\label{theta-redef}
&\frac{1}{N^2}\sum_{i\neq j\in\cN(t)}\theta^\ep(x_i^N(t)-x_j^N(t))\phi(x_i^N(t))\psi(x_j^N(t))1_{\{m_i^N(t)=m\}}1_{\{m_j^N(t)=n\}}\\
&=\left\langle\theta^\ep(x-y)\phi(x)\psi(y), \mu^{N,m}_t(dx)\mu^{N,n}_t(dy)\right\rangle, 
\end{aligned}
\end{equation}
where due to $\theta^\ep(0)=0$ we can include the terms with repeated indices $i=j$ to the LHS of \eqref{theta-redef}.

By \eqref{key-identity},\eqref{mg-van-1}, \eqref{mg-van-2},\eqref{theta-redef} for every $\phi\in C_c^\infty(\R^d)$ and $1\le m\le M$, we have that
 
\begin{equation}\label{key-refrased}
\begin{aligned}
\limsup_{N\to\infty}\E\Big|\left\langle \phi(x), \mu^{N,m}_T(dx)\right\rangle&-  \left\langle \phi(x), \mu^{N,m}_0(dx)\right\rangle - 
\int_0^Tdt\left\langle \mu_t^{N,m}(dx), \frac{\lambda^2}{2}\Delta \phi(x)\right\rangle\\
&-\int_0^Tdt\left\langle\mu^{N,m}_t(dx),\frac{1}{2}\operatorname{div}\left(Q(x,x)\nabla \phi(x)\right)\right\rangle\\
&-\int_0^Tdt \sum_{n=1}^{m-1}\frac{n}{m}\left\langle\theta^\ep(x-y)\phi(x), \mu^{N,m-n}_t(dy)\mu^{N,n}_t(dx)\right\rangle\\
&-\int_0^Tdt \sum_{n=1}^{m-1}\frac{m-n}{m}\left\langle\theta^\ep(x-y)\phi(y), \mu^{N,m-n}_t(dy)\mu^{N,n}_t(dx)\right\rangle\\
&+\int_0^Tdt \sum_{n=1}^{M}\left\langle\theta^\ep(x-y)\phi(x), \mu^{N,n}_t(dy)\mu^{N,m}_t(dx)\right\rangle\\
&+\int_0^Tdt \sum_{n=1}^M\left\langle\theta^\ep(x-y)\phi(y), \mu^{N,m}_t(dy)\mu^{N,n}_t(dx)\right\rangle\\
&-\int_0^T\sum_{k\in K}\left\langle \mu^{N,m}_t(dx),\sigma_k(x)\cdot\nabla \phi(x)\right\rangle dW_t^k\Big|=0.
\end{aligned}
\end{equation}
In Section \ref{tightness}, we show that the laws $\{\mathscr P_*^{N}\}_N$ of the sequence of $\cD_T(\cM_{+,1})^M\times C([0,T];\R)^{|K|}$-valued random variables
\[
\big\{\big\{\mu^{N,m}_t:t\in[0,T]\big\}_{m\le M},\big\{W_t^k:t\in[0,T]\big\}_{k\in K}\big\}, \quad N\in \N
\] 
are tight hence relatively compact (cf. Remark \ref{joint-tight}), where the space $C([0,T];\R)$ is endowed with the uniform topology. Fix any weak subsequential limit $\ovl{\mathscr P}_*$ of $\{\mathscr P_*^{N_j}\}_{j\in\N}$ along a subsequence $N_j$. By Skorohod's representation theorem, we can construct on an auxiliary probability space $(\wh\Omega, \wh\cF, \bP)$ (that depends on the subsequence) random variables $\big\{\{\wh\mu^{N_j,m} \}_{m\le M},\{\wh{W}^{N_j,k}\}_{k\in K}\big\}$, $j\ge 1$, and $\big\{\{\ovl \mu ^m\}_{m\le M},\{\ovl{W}^k\}_{k\in K}\big\}$, having the laws $\mathscr P_*^{N_j}$, $j\ge 1$, and $\ovl{\mathscr P}_*$, respectively, such that $\bP$-a.s. 
\[
\big\{\{\wh\mu^{N_j,m} \}_{m\le M},\{\wh{W}^{N_j,k}\}_{k\in K}\big\} \to \big\{\{\ovl \mu ^m\}_{m\le M},\{\ovl{W}^k\}_{k\in K}\big\}, \quad j\to\infty.
\]
Further, the limit measure has a uniformly bounded density $\{u_m(t,x):t\in[0,T]\}_{m\le M}$, i.e. for every $m$
\[
\ovl{\mu}^m_t(dx)=u_m(t,x)dx, \quad \|u_m\|_\infty\le\Gamma,
\]
as shown in Sections \ref{exist-density} and \ref{sec:free}, and we also have that $u_m(0,x)=r_mp_m(x)$.

By \eqref{key-refrased} and the representation, on $(\wh\Omega, \wh\cF, \bP)$ we have that for every $\phi\in C_c^\infty(\R^d)$ and $1\le m\le M$, 

\begin{equation}\label{key-on-new}
\begin{aligned}
\lim_{j\to\infty}\bE\Big|\left\langle \phi(x), \wh{\mu}^{N_j,m}_T(dx)\right\rangle&-  \left\langle \phi(x), \wh\mu^{N_j,m}_0(dx)\right\rangle - 
\int_0^Tdt\left\langle \wh\mu_t^{N_j,m}(dx), \frac{\lambda^2}{2}\Delta \phi(x)\right\rangle\\
&-\int_0^Tdt\left\langle\wh\mu^{N_j,m}_t(dx),\frac{1}{2}\operatorname{div}\left(Q(x,x)\nabla \phi(x)\right)\right\rangle\\
&-\int_0^Tdt \sum_{n=1}^{m-1}\frac{n}{m}\left\langle\theta^\ep(x-y)\phi(x),\wh \mu^{N_j,m-n}_t(dy)\wh\mu^{N_j,n}_t(dx)\right\rangle\\
&-\int_0^Tdt \sum_{n=1}^{m-1}\frac{m-n}{m}\left\langle\theta^\ep(x-y)\phi(y),\wh \mu^{N_j,m-n}_t(dy)\wh\mu^{N_j,n}_t(dx)\right\rangle\\
&+\int_0^Tdt \sum_{n=1}^{M}\left\langle\theta^\ep(x-y)\phi(x), \wh\mu^{N_j,n}_t(dy)\wh\mu^{N_j,m}_t(dx)\right\rangle\\
&+\int_0^Tdt \sum_{n=1}^M\left\langle\theta^\ep(x-y)\phi(y), \wh\mu^{N_j,m}_t(dy)\wh\mu^{N_j,n}_t(dx)\right\rangle\\
&-\int_0^T\sum_{k\in K}\left\langle \wh{\mu}^{N_j,m}_t(dx),\sigma_k(x)\cdot\nabla \phi(x)\right\rangle d\wh{W}_t^{N_j,k}\Big|=0.
\end{aligned}
\end{equation}
Since $\wh\mu^{N_j,m} \to \ovl \mu ^m$ in $\cD_T(\cM_{+,1})$ under Skorohod topology, $\bP$-a.s.  for every $1\le m\le M$, we have that $\bP$-a.s. for every $\phi\in C_c^\infty(\R^d)$ 
\begin{equation}\label{linear-terms}
\begin{aligned}
 &\sup_{t\in[0,T]}\left\langle \wh \mu^{N_j,m}_t(dx)- \ovl\mu^{m}_t(dx), \phi(x) \right\rangle \to 0 \\ 
&\sup_{t\in[0,T]}\left\langle \wh\mu_t^{N_j,m}(dx)- \ovl\mu_t^{m}(dx), \frac{\lambda^2}{2}\Delta \phi(x)\right\rangle \to 0\\
&\sup_{t\in[0,T]}\left\langle \wh\mu_t^{N_j,m}(dx)- \ovl\mu_t^{m}(dx), \frac{1}{2}\operatorname{div}\left(Q(x,x)\nabla \phi(x)\right)\right\rangle \to 0
\end{aligned}
\end{equation}
(cf. \cite[Ch. 3, Proposition 5.3]{EK}). 
The convergences also hold in $L^1(\bP)$ by dominated convergence, since the variables in \eqref{linear-terms} are all uniformly bounded. The middle four nonlinear terms in \eqref{key-on-new} also converge in $L^1(\bP)$ by Lemma \ref{lem:nonlin} below.

We now argue that the last martingale term in \eqref{key-on-new} also converges in $L^1(\bP)$, i.e. for every $1\le m\le M$,
\begin{align}\label{env-mg-conv}
\int_0^T\sum_{k\in K}\left\langle \wh{\mu}^{N_j,m}_t(dx),\sigma_k(x)\cdot\nabla \phi(x)\right\rangle d\wh{W}_t^{N_j,k}- \int_0^T\sum_{k\in K}\left\langle \ovl{\mu}^{m}_t(dx),\sigma_k(x)\cdot\nabla \phi(x)\right\rangle d\ovl{W}_t^{k}\to0, \quad j\to\infty.
\end{align}
Indeed, by Burkholder-Davis-Gundy inequality, we first have that for every $j\in\N$,
\begin{align}\label{BDG}
&\bE \left|\int_0^T\sum_{k\in K}\left\langle\wh \mu_t^{N_j,m}(dx)-\ovl \mu_t^m(dx),\nabla\phi(x)\cdot\sigma_k(x)\right\rangle d\wh{W}_t^{N_j,k}\right| \nonumber\\
&\le C_1\bE\left[\left(\int_0^T\sum_{k\in K}\left|\left\langle \wh\mu_t^{N_j,m}(dx)-\ovl \mu_t^m(dx),\nabla\phi(x)\cdot\sigma_k(x)\right\rangle\right|^2dt\right)^{1/2}\right]    \nonumber\\
&\le C_1\sqrt{T}\bE\sum_{k\in K}\sup_{t\in[0,T]}\left|\left\langle \wh\mu_t^{N_j,m}(dx)-\ovl \mu_t^m(dx),\nabla\phi(x)\cdot\sigma_k(x)\right\rangle\right|.
\end{align}
Since the variable inside the expectation in the last line is uniformly bounded (by $2\sum_{k\in K}\|\phi\|_{C^1}\|\sigma_k\|_\infty$), and converges to zero $\bP$-a.s. as $j\to\infty$, we have that \eqref{BDG} converges to zero. 

Secondly, since $\wh{W}_t^{N_j,k}-\ovl{W}_t^{k}$ is a $\bP$-martingale on $t\in[0,T]$ for every $k\in K$, we denote by $\big[\wh{W}^{N_j,k}-\ovl{W}^{k}\big]_t$ its quadratic variation. By Burkholder-Davis-Gundy inequality, 
\begin{align}\label{BDG-2}
&\bE \left|\int_0^T\sum_{k\in K}\left\langle \ovl \mu_t^m(dx),\nabla\phi(x)\cdot\sigma_k(x)\right\rangle d\left(\wh{W}_t^{N_j,k}-\ovl{W}_t^{k}\right)\right| \nonumber\\
&\le\sum_{k\in K}\bE \left|\int_0^T\left\langle \ovl \mu_t^m(dx),\nabla\phi(x)\cdot\sigma_k(x)\right\rangle d\left(\wh{W}_t^{N_j,k}-\ovl{W}_t^{k}\right)\right| \nonumber\\
&\le C_1'\sum_{k\in K}\bE\left[\left(\int_0^T\left|\left\langle \ovl{\mu}^m_t,\nabla\phi(x)\cdot\sigma_k(x)\right\rangle\right|^2d\big[\wh{W}^{N_j,k}-\ovl{W}^{k}\big]_t\right)^{1/2}\right]  \nonumber\\
&\le C'_1\|\phi\|_{C^1}\sum_{k\in K}\|\sigma_k\|_\infty\bE\left[\big[\wh{W}^{N_j,k}-\ovl{W}^{k}\big]_T^{1/2}\right].
\end{align}
Now by the definition of quadratic variation,
\begin{align}\label{quad-var}
\bE \big[\wh{W}^{N_j,k}-\ovl{W}^{k}\big]_T = \bE \left[\big|\wh{W}^{N_j,k}_T-\ovl{W}^{k}_T\big|^2\right].
\end{align}
Since $\wh{W}^{N_j,k}-\ovl{W}^{k}\to 0$ in the uniform topoplogy of $C([0,T];\R)$, $\bP$-a.s. as $j\to\infty$, we have that $\wh{W}^{N_j,k}_T\to\ovl{W}^{k}_T$, $\bP$-a.s.; and besides, for all $j\in\N$ and $p>2$,
\[
\bE\big[|\wh{W}_T^{N_j,k}|^p\big]=\bE\big[|\ovl{W}^{k}_T|^p\big]<\infty.
\]
By Vitali convergence theorem, RHS of \eqref{quad-var} converges to zero as $j\to\infty$, and as a consequence \eqref{BDG-2} also converges to zero. 

Combining \eqref{BDG}, \eqref{BDG-2} yields our claim \eqref{env-mg-conv} by the triangle inquality. Then, combining \eqref{key-on-new}, \eqref{linear-terms}, \eqref{env-mg-conv} and Lemma \ref{lem:nonlin}, we conclude that for every $\phi\in C_c^\infty(\R^d)$ and $1\le m\le M$, it holds that $\bP$-a.s.
\begin{align*}
\left\langle \phi(x), u_m(T,x)\right\rangle&-  \left\langle \phi(x), r_mp_m(x)\right\rangle - 
\int_0^Tdt\left\langle u_m(t,x), \frac{\lambda^2}{2}\Delta \phi(x)\right\rangle\\
&-\int_0^Tdt\left\langle u_m(t,x),\frac{1}{2}\operatorname{div}\left(Q(x,x)\nabla \phi(x)\right)\right\rangle\\
&-\int_0^Tdt \sum_{n=1}^{m-1}\left\langle u_{ m-n}(t,x)u_{ n}(t,x), \phi(x)\right\rangle\\
&+2\int_0^Tdt \sum_{n=1}^{M}\left\langle u_{ n}(t,x)u_m(t,x), \phi(x)\right\rangle\\
&-\int_0^T\sum_{k\in K}\left\langle \ovl{\mu}^{m}_t(dx),\sigma_k(x)\cdot\nabla \phi(x)\right\rangle d\ovl{W}_t^{k}=0.
\end{align*}
(We used $n/m+(m-n)/m=1$ for every $1\le n\le m-1$ in \eqref{key-on-new}.)
By the separability of the space $C_c^\infty(\R^d)$, we can combine countably many null sets such that $\bP$-a.s. for all $\phi\in C_c^\infty(\R^d)$, the preceding identity holds, which means that $\big(\{u_m\}_{m\le M}, \{\ovl{W}^k\}_{k\in K}\big)$ is a (both analytically and probabilistically) weak solution to the SPDE \eqref{spde} on $(\wh\Omega,\wh\cF,\bP)$ endowed with the filtration
\begin{align*}
\cG_t:=\sigma\big\{\{u_m(s,\cdot)\}_{m\le M}, \{\ovl{W}^k_s\}_{k\in K}: s\in[0,t]\big\}, \quad t\ge0.
\end{align*}

To treat the nonlinear terms in \eqref{key-on-new}, in Sections \ref{ito-tanaka}-\ref{sec:bounds} we use It\^o-Tanaka trick to show that
\begin{proposition}\label{ppn:tanaka}
On the original probability space $(\Omega,\cF,\P)$, for every $T$ finite, $1\le m,n\le M$ and $\phi,\psi\in C_c^\infty(\R^d)$, we have that 
\begin{equation}\label{key}
\begin{aligned}
\lim_{|z|\to0}\limsup_{N\to\infty}\E&\sup_{t\in[0,T]}\Big|\int_0^tds\frac{1}{N^2}\sum_{i\neq j\in\cN(s)}\phi\left(x_i^N(s)\right)\psi\left(x_j^N(s)\right)1_{\{m_i^N(s)=m,m_j^N(s)=n\}}\\
&\cdot\left[\theta^\ep\left(x_i^N(s)-x_j^N(s)+z\right)-\theta^\ep\left(x_i^N(s)-x_j^N(s)\right)\right]\Big|=0.
\end{aligned}
\end{equation}
\end{proposition}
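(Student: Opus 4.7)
I would establish Proposition \ref{ppn:tanaka} via an It\^o--Tanaka trick, following the strategy of \cite{HR, HR3}. The plan is to introduce an auxiliary function $V^{z,\ep}$ whose diffusion generator, applied via It\^o's formula to a pairwise functional of the configuration, exactly reproduces the shifted-mollifier difference appearing in \eqref{key}. The remaining error terms then admit norms that vanish uniformly in $N$ and $\ep$ as $|z|\to 0$.

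First, let $G_\lambda$ denote the Bessel-type kernel of $(1-\lambda^2\Delta)^{-1}$ on $\R^d$, and set
\begin{align*}
V^{z,\ep}:=-G_\lambda\ast[\theta^\ep(\cdot+z)-\theta^\ep],\qquad\text{so that}\qquad \lambda^2\Delta V^{z,\ep}-V^{z,\ep}=\theta^\ep(\cdot+z)-\theta^\ep.
\end{align*}
The crucial property is
\begin{align*}
\|V^{z,\ep}\|_{L^1(\R^d)}\;\le\;\|G_\lambda(\cdot+z)-G_\lambda\|_{L^1}\cdot\|\theta^\ep\|_{L^1}\;=\;\|G_\lambda(\cdot+z)-G_\lambda\|_{L^1}\longrightarrow 0\quad\text{as }|z|\to 0,
\end{align*}
\emph{uniformly in $\ep$}, by $L^1$-continuity of translation. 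Companion bounds on $\|\nabla V^{z,\ep}\ast\varphi\|_{L^1}$ (for smooth, compactly supported $\varphi$) and on $\||x|^2\nabla^2V^{z,\ep}\|_{L^1}$ follow analogously, using the smoothness of the test functions $\phi,\psi$ and interpolation between $\|V^{z,\ep}\|_{L^1}\to 0$ and uniform Sobolev bounds on $V^{z,\ep}$ to tame the mild singularities of $\nabla^k G_\lambda$ at the origin.

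I then apply the generator $\cL^N=\cL^N_D+\cL^N_J$ from \eqref{diff-gen}--\eqref{jump-gen} to the pairwise functional
\begin{align*}
F(\eta):=\frac{1}{N^2}\sum_{i\neq j\in\cN(\eta)}V^{z,\ep}(x_i-x_j)\,\phi(x_i)\psi(x_j)\,1_{\{m_i=m\}}1_{\{m_j=n\}}.
\end{align*}
Acting on $V^{z,\ep}(x_i-x_j)$, the leading diffusion piece $\tfrac{\lambda^2}{2}(\Delta_{x_i}+\Delta_{x_j})V^{z,\ep}(x_i-x_j)=\lambda^2\Delta V^{z,\ep}(x_i-x_j)$ equals, by the auxiliary PDE, $V^{z,\ep}(x_i-x_j)+[\theta^\ep(x_i-x_j+z)-\theta^\ep(x_i-x_j)]$. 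Integrating the It\^o identity $F(\eta(t))-F(\eta(0))=\int_0^t\cL^NF(\eta(s))ds+M_t$ therefore isolates the quantity inside the supremum in \eqref{key} as
\begin{align*}
F(\eta(t))-F(\eta(0))-\int_0^tR^N(\eta(s))ds-M_t,
\end{align*}
where $R^N$ gathers the residual drifts: the $V^{z,\ep}$-self correction, common-noise cross terms $Q(x_i,x_j):\nabla^2V^{z,\ep}(x_i-x_j)$, first-order couplings of $\nabla V^{z,\ep}$ with $\nabla\phi,\nabla\psi$, and the jump contribution from $\cL^N_JF$.

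Finally I bound each piece so that its $\E\sup_{t\in[0,T]}$ vanishes as $|z|\to 0$, uniformly in $N$. The boundary and $V^{z,\ep}$-integrated terms satisfy $\E|F(\eta(t))|\le C\Gamma^2\|\phi\|_\infty\|\psi\|_\infty\|V^{z,\ep}\|_{L^1}\to 0$: the empirical two-point correlator is controlled via the coupling to the free particle system and the uniform-in-$N$ conditional density bound $\Gamma$ supplied by Proposition \ref{bdd-density}. For the common-noise cross term I exploit the algebraic identity
\begin{align*}
Q(x_i,x_i)+Q(x_j,x_j)-Q(x_i,x_j)-Q(x_j,x_i)=\sum_{k\in K}(\sigma_k(x_i)-\sigma_k(x_j))^{\otimes 2},
\end{align*}
whose operator norm is $\lesssim|x_i-x_j|^2$ by Lipschitz continuity of $\sigma_k$; this damping kills the worst singularity of $\nabla^2V^{z,\ep}$ at the origin and reduces the task to the weighted bound $\||x|^2\nabla^2V^{z,\ep}\|_{L^1}\to 0$. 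The first-order couplings with $\nabla\phi,\nabla\psi$ reduce, after one integration by parts, to $\|V^{z,\ep}\|_{L^1}$-type estimates; the jump contribution is controlled by Lemma \ref{cardinality} combined with an empirical-measure bound on $V^{z,\ep}$; the martingale is handled by Burkholder--Davis--Gundy and Doob, its quadratic variation dominated by the same $L^1$-type norms. The hard step will be the common-noise cross term, where, unlike the independent-noise Laplacian, no exact cancellation is available, and uniform-in-$\ep$ control of $\nabla^2V^{z,\ep}$---for which naive Calder\'on--Zygmund estimates blow up as $\|\theta^\ep\|_{L^p}$ does---requires genuinely leveraging the $|x_i-x_j|^2$ cancellation of $\sigma_k(x_i)-\sigma_k(x_j)$; a variant that may circumvent this difficulty is to use a parabolic backward auxiliary problem with vanishing terminal condition.
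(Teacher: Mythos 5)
Your overall architecture (couple to the free system, use the two\-particle joint density of Proposition \ref{bdd-density} to convert empirical sums into $L^1$-type norms of the auxiliary function, control the boundary, jump and martingale terms by $\|V^{z,\ep}\|_{L^1}$- and $\|\nabla V^{z,\ep}\|_{L^1}$-type quantities plus the crude sup bounds and the scaling \eqref{local}) matches the paper's, and your observation that $\|V^{z,\ep}\|_{L^1}\le\|G_\lambda(\cdot+z)-G_\lambda\|_{L^1}\to0$ uniformly in $\ep$ is a clean way to get the zeroth- and first-order estimates. But the step you yourself flag as hard is a genuine gap, and your proposed reduction for it fails. Writing $V^{z,\ep}(w)=W^\ep(w+z)-W^\ep(w)$ with $W^\ep:=-G_\lambda*\theta^\ep$, one has $|\nabla^2W^\ep(u)|\asymp(|u|\vee\ep)^{-d}$ near the origin, so
\begin{equation*}
\int_{\R^d}|w|^2\left|\nabla^2V^{z,\ep}(w)\right|dw\;\ge\;\int_{|u|\le|z|/2}|u-z|^2\left|\nabla^2W^\ep(u)\right|du-O(|z|^2)\;\gtrsim\;|z|^2\log\!\big(|z|/\ep\big),
\end{equation*}
because the weight $|w|^2$ vanishes at the singularity $w=0$ of $\nabla^2W^\ep(\cdot)$ but equals $\approx|z|^2$ at the singularity $w=-z$ of $\nabla^2W^\ep(\cdot+z)$. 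Hence $\||x|^2\nabla^2V^{z,\ep}\|_{L^1}$ is not even bounded under $\limsup_{\ep\to0}$ for fixed $z$, and the iterated limit $\lim_{|z|\to0}\limsup_{N\to\infty}$ required by \eqref{key} cannot be closed this way. The Lipschitz damping $\|\sum_k(\sigma_k(x_i)-\sigma_k(x_j))^{\otimes2}\|\lesssim|x_i-x_j|^2$ (your identity for which is correct) therefore does not suffice once absolute values are taken; any rescue would need a further integration by parts against the two\-particle density, which in turn requires gradient bounds on $\bp_t$ that Proposition \ref{bdd-density} does not supply.

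The paper avoids this entirely by the device you mention only in your last sentence: the auxiliary function is taken to solve the \emph{backward parabolic} problem \eqref{aux-pde-simp} on $[0,T]\times\R^{2d}$ for the \emph{full} two\-particle generator $\cA_\bx=\frac{\lambda^2}{2}\Delta_\bx+\frac12\mathrm{tr}(\wh Q(\bx)D^2_\bx)$, i.e.\ with the environmental covariance $Q(x,x),Q(y,y),Q(x,y),Q(y,x)$ built into the operator (this forces $r^{\ep,z}$ to depend on $(x,y)$ and not just on $x-y$, since $Q$ is not spatially homogeneous). Then the terms $H_1+H_3$ containing \emph{all} second derivatives of $v^{\ep,z}$ collapse exactly onto $-[\theta^\ep(\cdot+z)-\theta^\ep(\cdot)]$ via the PDE, no second derivative of the auxiliary function survives among the error terms, and only $v^{\ep,z}$ and $\nabla v^{\ep,z}$ need to be estimated — which is done through Gaussian bounds for the heat kernel of the uniformly elliptic $\cA_\bx$ (Proposition \ref{bounds}) and a Vitali/Fatou argument in place of your translation\-continuity step. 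You have correctly identified both the obstruction and its resolution, but the resolution is not carried out, and the route you do carry out breaks at exactly that point.
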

Given \eqref{key}, we can show the convergence in $L^1(\bP)$ of each of the nonlinear terms in \eqref{key-on-new} as in the following lemma.
\begin{lemma}\label{lem:nonlin}
Granted Proposition \ref{ppn:tanaka}. On $(\wh\Omega,\wh\cF,\bP)$, for every $T$ finite, $1\le m,n\le M$ and $\phi,\psi\in C_c^\infty(\R^d)$, we have that 

\begin{align}\label{conv-nonlin}
\lim_{j\to\infty}\bE\sup_{t\in[0,T]}\Big|\int_0^t \left\langle\theta^\ep(x-y)\phi(x)\psi(y),\wh\mu_s^{N_j,m}(dx)\wh\mu^{N_j,n}_s(dy)\right\rangle ds- \int_0^tds\int_{\R^d}dw\phi(w)\psi(w)u_m(s,w)u_n(s,w)\Big|=0.
\end{align}
\end{lemma}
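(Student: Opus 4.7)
The main difficulty is that $\theta^\ep(x-y)$ is a \emph{singular} test function which concentrates as $\ep\to 0$, and hence cannot be directly paired with the Skorohod-type weak convergence $\wh\mu^{N_j,\cdot}\to\ovl\mu^{\cdot}$ in $\cD_T(\cM_{+,1})^M$. The plan is to insert a smooth mollifier at an intermediate scale $\delta\gg\ep$, pass to the limit $j\to\infty$ with $\delta$ fixed, and only then let $\delta\to 0$. Let $\rho^\delta\in C_c^\infty(\R^d)$ be a nonnegative symmetric mollifier supported in $\B(0,\delta)$ with $\int\rho^\delta=1$, and set $\theta^{\ep,\delta}:=\theta^\ep*\rho^\delta$. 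Define
\begin{equation*}
A_j^\delta := \int_0^t \bigl\langle \theta^{\ep,\delta}(x-y)\phi(x)\psi(y), \wh\mu_s^{N_j,m}(dx)\wh\mu_s^{N_j,n}(dy)\bigr\rangle\, ds,
\end{equation*}
\begin{equation*}
B^\delta := \int_0^t \int\!\!\int \rho^\delta(x-y)\phi(x)\psi(y)\, u_m(s,x)u_n(s,y)\, dx\, dy\, ds,
\end{equation*}
and let $A_j, B$ denote the corresponding unmollified quantities in \eqref{conv-nonlin}. By the triangle inequality it suffices to show: (i) $\limsup_j \bE\sup_t|A_j-A_j^\delta|\to 0$ as $\delta\to 0$; (ii) $\bE\sup_t|A_j^\delta-B^\delta|\to 0$ as $j\to\infty$ for each fixed $\delta>0$; and (iii) $\bE\sup_t|B^\delta-B|\to 0$ as $\delta\to 0$.

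For (i), the identity $\theta^{\ep,\delta}(x-y)-\theta^\ep(x-y)=\int\rho^\delta(z)[\theta^\ep(x-y-z)-\theta^\ep(x-y)]\,dz$ together with Fubini and Jensen gives
\begin{equation*}
\bE\sup_t|A_j^\delta-A_j|\le \int\rho^\delta(z)\, f_j(z)\, dz,
\end{equation*}
where $f_j(z)$ is precisely the expression inside $\limsup_N$ on the left-hand side of \eqref{key} (after transporting to the original probability space via the law identity of $\wh\mu^{N_j,\cdot}$ with $\mu^{N_j,\cdot}$, and using \eqref{theta-redef}). A uniform-in-$(j,z)$ bound $f_j(z)\le C(\phi,\psi)$ holds, whose $z=0$ part follows from Lemma \ref{cardinality} and whose shifted part follows from the free-system density estimates of Section \ref{exist-density}. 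By reverse Fatou, $\limsup_j\int\rho^\delta f_j\, dz\le \int\rho^\delta(z) \limsup_j f_j(z)\, dz$; since Proposition \ref{ppn:tanaka} forces $\limsup_j f_j(z)\to 0$ as $|z|\to 0$ while remaining uniformly bounded, the right-hand side tends to $0$ as $\delta\to 0$. This is the heart of the argument and the point where the It\^o-Tanaka-type estimate \eqref{key} is essential.

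For (ii), split $A_j^\delta-B^\delta=\mathrm{I}_j+\mathrm{II}_j$, where $\mathrm{I}_j$ is the integral of $[\theta^{\ep,\delta}-\rho^\delta](x-y)\phi(x)\psi(y)$ against $\wh\mu^{N_j,m}_s\otimes\wh\mu^{N_j,n}_s$ and $\mathrm{II}_j$ is the integral of $\rho^\delta(x-y)\phi(x)\psi(y)$ against $\wh\mu_s^{N_j,m}\otimes\wh\mu_s^{N_j,n}-\ovl\mu_s^m\otimes\ovl\mu_s^n$. Using $\int\theta^\ep=1$ and $\mathrm{supp}(\theta^\ep)\subset\B(0,C_0\ep)$, the mean value theorem gives $\|\theta^{\ep,\delta}-\rho^\delta\|_\infty\le C_0\ep\|\nabla\rho^\delta\|_\infty\to 0$ as $\ep=\ep(N_j)\to 0$, which together with $\wh\mu_s^{N_j,\cdot}(\R^d)\le 1$ handles $\mathrm{I}_j$. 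For $\mathrm{II}_j$, the test function $\rho^\delta(x-y)\phi(x)\psi(y)$ is bounded and continuous on $\R^d\times\R^d$; the $\bP$-a.s.\ Skorohod convergence of the marginals implies weak convergence of the product measures $\wh\mu_s^{N_j,m}\otimes\wh\mu_s^{N_j,n}\to\ovl\mu_s^m\otimes\ovl\mu_s^n$ at every continuity point $s$ of the limit (hence Lebesgue-a.e.\ $s\in[0,T]$), and dominated convergence in $(s,\bP)$ gives $\bE\sup_t|\mathrm{II}_j|\to 0$. Finally for (iii),
\begin{equation*}
B^\delta-B=\int_0^t\int \phi(x) u_m(s,x)\bigl[\rho^\delta\ast(\psi u_n(s,\cdot))(x)-\psi(x) u_n(s,x)\bigr] dx\, ds;
\end{equation*}
since $\psi u_n(s,\cdot)\in L^1(\R^d)$, standard mollifier convergence yields $\rho^\delta\ast(\psi u_n(s,\cdot))\to\psi u_n(s,\cdot)$ in $L^1$ for each $s$, and $\|u_m\|_\infty\le\Gamma$ together with dominated convergence in $s$ gives $\bE\sup_t|B^\delta-B|\to 0$. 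Combining (i), (ii) and (iii) through the triangle inequality proves \eqref{conv-nonlin}.
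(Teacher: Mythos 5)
Your proof is correct, and it follows the same overall strategy as the paper's: insert a mollifier at an intermediate scale $\delta$, use Proposition \ref{ppn:tanaka} to control the $\theta^\ep\rightsquigarrow$ mollified-kernel replacement, pass $j\to\infty$ at fixed $\delta$, and only then send $\delta\to0$ using the uniform bound $\|u_m\|_\infty\le\Gamma$. The implementation of the middle step differs in a way worth noting. The paper mollifies \emph{both empirical measures}: it uses a double convolution $\chi^\delta(z_1)\chi^\delta(z_2)$, shifts the arguments of $\phi,\psi$, changes variables to produce the mollified fields $\langle\chi^\delta(\cdot-w),\wh\mu_s^{N_j,m}\rangle$, collapses $w_2\to w_1$ at cost $O(\ep\delta^{-2d-1})$, and then passes to the limit pointwise in $w$. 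You instead mollify the \emph{kernel} once, replace $\theta^\ep\ast\rho^\delta$ by $\rho^\delta$ at cost $O(\ep\|\nabla\rho^\delta\|_\infty)$, and invoke weak convergence of the product measures $\wh\mu_s^{N_j,m}\otimes\wh\mu_s^{N_j,n}$ against the fixed $C_c(\R^{2d})$ test function $\rho^\delta(x-y)\phi(x)\psi(y)$. Your route avoids the double-mollification bookkeeping and the argument-shifting error term, at the price of having to justify weak convergence of product (sub-probability) measures at a.e.\ $s$ — which is standard for compactly supported test functions and which you handle correctly. The two essential ingredients are used identically in both proofs: the It\^o--Tanaka estimate \eqref{key} (transported to $(\wh\Omega,\wh\cF,\bP)$ via equality of laws, with the uniform-in-$z$ domination supplied by the free-system density bounds) for step (i), and the deterministic bound on the limit densities for step (iii).
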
 

\begin{proof}
First notice that the quantity inside the absolute value on the LHS of \eqref{key} is a function of the empirical measure (using $\theta^\ep(0)=0$)
\[
\int_0^t \left\langle\big[\theta^\ep(x-y+z)-\theta^\ep(x-y)\big]\phi(x)\psi(y),\mu_s^{N,m}(dx)\mu^{N,n}_s(dy)\right\rangle ds.
\]
Thus,  under Skorohod's representation, the same limit \eqref{key} holds on the auxiliary probability space $(\wh\Omega,\wh\cF,\bP)$ along the subsequence $N_j$, namely
\begin{align}\label{key-new}
\lim_{|z|\to0}\limsup_{j\to\infty}\bE\sup_{t\in[0,T]}\Big|\int_0^t \left\langle\big[\theta^\ep(x-y+z)-\theta^\ep(x-y)\big]\phi(x)\psi(y),\wh\mu_s^{N_j,m}(dx) \wh\mu^{N_j,n}_s(dy)\right\rangle ds\Big|=0.
\end{align}
The subsequent argument is similar to \cite[pages 42-43]{HR}. We introduce an auxiliary mollifier $\chi^\delta(x)=\delta^{-d}\chi(\delta^{-1}x)$ for some $C^\infty_b(\R^d)$ function $\chi:\R^d\to\R_+$, nonnegative, compactly supported in $\B(0,1)$,  with $\int\chi=1$. By \eqref{key-new}, we can write 
\begin{equation}\label{new-moll}
\begin{aligned}
&\sup_{t\in[0,T]}\Big|\int_0^t \left\langle\theta^\ep(x-y)\phi(x)\psi(y),\wh\mu_s^{N_j,m}(dx)\wh\mu^{N_j,n}_s(dy)\right\rangle ds\\
&- \iint_{\R^{2d}}dz_1dz_2\chi^\delta(z_1)\chi^\delta(z_2)\int_0^t \left\langle\theta^\ep(x-y+z_2-z_1)\phi(x)\psi(y),\wh\mu_s^{N_j,m}(dx) \wh\mu^{N_j,n}_s(dy)\right\rangle ds\Big|\\
&=:\text{error}(N,\delta)\\
&\quad \text{where }\quad \lim_{\delta\to0}\limsup_{N\to\infty}\bE|\text{error}(N,\delta)|=0.
\end{aligned}
\end{equation}
Shifting the arguments of $\phi$ and $\psi$ by $z_1$ and $z_2$ respectively,  with $|z_1|,|z_2|\le \delta$, we have that 
\begin{equation}
\begin{aligned}\label{shift}
&\Big|\iint_{\R^{2d}}dz_1dz_2\chi^\delta(z_1)\chi^\delta(z_2)\int_0^t \left\langle\theta^\ep(x-y+z_2-z_1)\phi(x)\psi(y),\wh\mu_s^{N_j,m}(dx)\wh\mu^{N_j,n}_s(dy)\right\rangle ds\\
- &\iint_{\R^{2d}}dz_1dz_2\chi^\delta(z_1)\chi^\delta(z_2)\int_0^t\left\langle\theta^\ep(x-y+z_2-z_1)\phi(x-z_1)\psi(y-z_2),\wh\mu_s^{N_j,m}(dx)\wh\mu^{N_j,n}_s(dy)\right\rangle ds\Big|\\
&\le C(\phi,\psi)\delta\iint_{\R^{2d}}dz_1dz_2\chi^\delta(z_1)\chi^\delta(z_2)\int_0^t  \left\langle\theta^\ep(x-y+z_2-z_1),\wh\mu_s^{N_j,m}(dx) \wh\mu^{N_j,n}_s(dy)\right\rangle ds.
\end{aligned}
\end{equation}
By a change of variables, the second term in \eqref{shift}
\begin{align*}
&\iint_{\R^{2d}}dz_1dz_2\chi^\delta(z_1)\chi^\delta(z_2)\int_0^t\left\langle \theta^\ep(x-y+z_2-z_1)\phi(x-z_1)\psi(y-z_2),\wh\mu_s^{N_j,m}(dx)\wh\mu^{N_j,n}_s(dy)\right\rangle ds\\
&= \iint_{\R^{2d}}dw_1dw_2\theta^\ep(w_1-w_2)\phi(w_1)\psi(w_2)\int_0^t\left\langle\chi^\delta(x-w_1),\wh\mu_s^{N_j,m}(dx)\right\rangle \left\langle \chi^\delta(y-w_2)\wh\mu^{N_j,n}_s(dy)\right\rangle ds.
\end{align*}
We now shift $w_2$ to $w_1$ in some arguments, using that $|w_1-w_2|\le 2C_0\ep$ being in the support of $\theta^\ep$, we get for every $t\in[0,T]$,
\begin{align*}
&\Big|\int_0^tds\iint_{\R^{2d}}dw_1dw_2\theta^\ep\left(w_1-w_2\right)\phi(w_1)\psi(w_2)\left\langle \chi^\delta(\cdot-w_1), \wh\mu_s^{N_j,m}\right\rangle\left\langle \chi^\delta(\cdot-w_2), \wh\mu_s^{N_j,n}\right\rangle\\
&- \int_0^tds\iint_{\R^{2d}}dw_1dw_2\theta^\ep\left(w_1-w_2\right)\phi(w_1)\psi(w_1)\left\langle \chi^\delta(\cdot-w_1), \wh\mu_s^{N_j,m}\right\rangle\left\langle \chi^\delta(\cdot-w_1), \wh\mu_s^{N_j,n}\right\rangle\Big|\\
&\le C(\phi,\psi, C_0,T)\ep\delta^{-2d-1}.
\end{align*}
Since $\int \theta^\ep(w_1-w_2)dw_2 =1$, the second term above equals
\begin{align*}
&\int_0^tds\iint_{\R^{2d}}dw_1dw_2\theta^\ep\left(w_1-w_2\right)\phi(w_1)\psi(w_1)\left\langle \chi^\delta(\cdot-w_1),\wh \mu_s^{N_j,m}\right\rangle\left\langle \chi^\delta(\cdot-w_1), \wh\mu_s^{N_j,n}\right\rangle\\
&=\int_0^tds\int_{\R^d}dw_1\phi(w_1)\psi(w_1)\left\langle \chi^\delta(\cdot-w_1), \wh\mu_s^{N_j,m}\right\rangle\left\langle \chi^\delta(\cdot-w_1), \wh\mu_s^{N_j,n}\right\rangle.
\end{align*}
In Section \ref{exist-density}, it is shown that $\{\wh\mu_t^{N_j,m}(dx):t\in[0,T]\}_{m\le M}\to \{u_m(t,x)dx: t\in[0,T]\}_{m\le M}$, $\bP$-a.s., we get that for fixed $\delta>0$, as $N_j\to\infty$ (hence $\ep=\ep(N_j)\to0$),  $\bP$-a.s. 
\begin{align*}
&\sup_{t\in[0,T]}\Big|\int_0^tds\int_{\R^d}dw_1\phi(w_1)\psi(w_1)\left\langle \chi^\delta(\cdot-w_1), \wh\mu_s^{N_j,m}\right\rangle\left\langle \chi^\delta(\cdot-w_1), \wh\mu_s^{N_j,n}\right\rangle\\
&-\int_0^tds\int_{\R^d}dw_1\phi(w_1)\psi(w_1)\left\langle \chi^\delta(\cdot-w_1), u_m(s,\cdot)\right\rangle\left\langle \chi^\delta(\cdot-w_1), u_n(s,\cdot)\right\rangle\Big|\to0.
\end{align*}
The convergence also holds in $L^1(\bP)$ by dominated convergence (note that at this step $\delta$ is fixed).
Then, since $\int\chi^\delta =1$ for any $\delta>0$ and $\{u_m\}_{m\le M}$ is bounded above uniformly by $\Gamma$, we have that 
\[
\left\langle \chi^\delta(\cdot-w_1), u_m(s,\cdot)\right\rangle\le \Gamma.
\]
By dominated convergence theorem, as $\delta\to0$ we get that $\bP$-a.s. 
\begin{align*}
&\sup_{t\in[0,T]}\Big|\int_0^tds\int_{\R^d}dw_1\phi(w_1)\psi(w_1)\left\langle \chi^\delta(\cdot-w_1), u_m(s,\cdot)\right\rangle\left\langle \chi^\delta(\cdot-w_1), u_n(s,\cdot)\right\rangle\\
&-\int_0^tds\int_{\R^d}dw_1\phi(w_1)\psi(w_1)u_m(s,w_1)u_n(s,w_1)\Big|\to0.
\end{align*}
The convergence also holds in $L^1(\bP)$. There is also the minor term on the RHS of \eqref{shift}
\[
C(\phi,\psi)\delta\sup_{t\in[0,T]}  \Big|\int_0^t\iint_{\R^{2d}}dz_1dz_2\chi^\delta(z_1)\chi^\delta(z_2)\left\langle\theta^\ep(x-y+z_2-z_1),\wh\mu_s^{N_j,m}(dx)\wh\mu^{N_j,n}_s(dy)\right\rangle ds\Big|
\]
that can be shown in a similar way to vanish in $L^1(\bP)$ as $j\to\infty$ followed by $\delta\to0$. 

By \eqref{new-moll}, \eqref{shift} and the previous chain of limits, we get \eqref{conv-nonlin}.
\end{proof}

\section{It\^o-Tanaka procedure}\label{ito-tanaka}
Our goal in this and  next sections is to prove Proposition \ref{ppn:tanaka}, which we argue on the original probability space $(\Omega,\cF,\P)$. 

The It\^o-Tanaka trick, well-known in the setting of SDEs, is a way to substitute a less regular function (here $\theta^\ep(\cdot)$) by a more regular one, via the application of It\^o formula. Fix $1\le m,n\le N$ and consider the (time-dependent) functional 
\begin{align*}
F_2(t,\eta):=\frac{1}{N^2}\sum_{i\neq j\in\cN(\eta)}v^{\ep,z}\left(t,x_i, x_j\right)\phi(x_i)\psi(x_j)1_{\{m_i=m,m_j=n\}}
\end{align*}
where $v^{\ep,z}(t,x,y):[0,T]\times\R^{d}\times\R^d\to\R$ also depends on $z\in\R^d$. In fact, it is of the form of a difference 
\begin{align*}
v^{\ep,z}(t,x,y)= r^{\ep,z}(t,x,y)-r^{\ep,0}(t,x,y)
\end{align*}
where $r^{\ep,z}(t,x,y)$ is a family (indexed by $z$) of nonnegative functions in the parabolic H\"older space $C^{1+\alpha/2, 2+\alpha}([0,T]\times\R^{2d})$ (cf. \cite{Kry}), for some $\alpha\in(0,1)$, defined in \eqref{aux-pde}. Applying It\^o formula to the process
\begin{align*}
F_2(t,\eta(t)):=\frac{1}{N^2}\sum_{i\neq j\in\cN(t)}v^{\ep,z}\left(t,x_i^N(t),x_j^N(t)\right)\phi(x_i^N(t))\psi(x_j^N(t))1_{\{m_i^N(t)=m,m_j^N(t)=n\}}, \quad t\ge0
\end{align*}
and integrating on $[0,T']$, for any $T'\le T$, we get the following terms  from the action of the diffusion generator $\cL^N_D$ \eqref{diff-gen}:
\begin{align}\label{H1}
H_{1}:=\int_0^{T'}dt \frac{1}{N^2}\sum_{i\neq j\in\cN(t)}\left(\left(\partial_t+\frac{\lambda^2}{2} \Delta_x+\frac{\lambda^2}{2}\Delta_y\right) v^{\ep,z}\right)\left(t,x_i^N(t),x_j^N(t)\right)\phi(x_i^N(t))\psi(x_j^N(t))1_{\{m_i^N(t)=m,m_j^N(t)=n\}}\;;
\end{align}
\begin{equation}
\begin{aligned}
H_{2}&:=\int_0^{T'}dt\frac{\lambda^2}{N^2}\sum_{i\neq j\in\cN(t)}\Big[\frac{1}{2}v^{\ep,z}\left(t,x_i^N(t),x_j^N(t)\right)\left(\Delta \phi(x_i^N(t))\psi(x_j^N(t))+\phi(x_i^N(t))\Delta \psi(x_j^N(t))\right)\\
&+\left(\nabla_x v^{\ep,z}\right)\left(t,x_i^N(t),x_j^N(t)\right)\cdot \nabla \phi(x_i^N(t))\psi(x_j^N(t))\\
&+\left(\nabla_y v^{\ep,z}\right)\left(t,x_i^N(t),x_j^N(t)\right)\cdot \nabla \psi(x_j^N(t))\phi(x_i^N(t))\Big]
1_{\{m_i^N(t)=m,m_j^N(t)=n\}} \;;
\end{aligned}
\end{equation}
where $\Delta_x$ denotes Laplacian with respect to the first $d$ spatial coordinates, and $\Delta_y$ with respect to the last $d$ spatial coordinates; the same interpretation applies for gradients $\nabla_x, \nabla_y$;
\begin{equation}\label{H3}
\begin{aligned}
H_{3}:=&\int_0^{T'}dt \frac{1}{N^2}\sum_{i\neq j\in\cN(t)}
\frac{1}{2}\sum_{\alpha,\beta=1}^d\Big[Q^{\alpha\beta}(x_i^N(t),x_i^N(t))\left(\partial^2_{x^\alpha x^\beta}v^{\ep,z}\right)\left(t,x_i^N(t),x_j^N(t)\right)\\
&+Q^{\alpha\beta}(x_j^N(t),x_j^N(t))\left(\partial^2_{y^\alpha y^\beta}v^{\ep,z}\right)\left(t,x_i^N(t),x_j^N(t)\right)\\
&+Q^{\alpha\beta}(x_i^N(t),x_j^N(t))\left(\partial^2_{x^\alpha y^\beta}v^{\ep,z}\right)\left(t,x_i^N(t),x_j^N(t)\right)\\
&+Q^{\alpha\beta}(x_j^N(t),x_i^N(t))\left(\partial^2_{y^\alpha x^\beta}v^{\ep,z}\right)\left(t,x_i^N(t),x_j^N(t)\right)\Big]
\phi(x_i^N(t))\psi(x_j^N(t))1_{\{m_i^N(t)=m,m_j^N(t)=n\}} \; ;
\end{aligned}
\end{equation}
\begin{equation}
\begin{aligned}
H_{4}:=&\int_0^{T'}dt \frac{1}{N^2}\sum_{i\neq j\in\cN(t)}
\frac{1}{2}\sum_{\alpha,\beta=1}^d  \Big[Q^{\alpha\beta}(x_i^N(t),x_i^N(t))v^{\ep,z}\left(t,x_i^N(t),x_j^N(t)\right)\partial^2_{\alpha \beta} \phi(x_i^N(t))\psi(x_j^N(t))\\
& +Q^{\alpha\beta}(x_j^N(t),x_j^N(t))v^{\ep,z}\left(t,x_i^N(t),x_j^N(t)\right)\phi(x_i^N(t))\partial^2_{\alpha \beta} \psi(x_j^N(t))\\
&+2Q^{\alpha\beta}(x_i^N(t),x_i^N(t)) \left(\partial_{x^\alpha}v^{\ep,z}\right) \left(t,x_i^N(t),x_j^N(t)\right)\partial_{\beta} \phi(x_i^N(t))\psi(x_j^N(t))\\
&+ 2Q^{\alpha\beta}(x_j^N(t),x_j^N(t)) \left(\partial_{y^\alpha}v^{\ep,z}\right)\left(t,x_i^N(t),x_j^N(t)\right)\partial_{\beta} \psi(x_j^N(t))\phi(x_i^N(t))\\
&+Q^{\alpha\beta}(x_i^N(t),x_j^N(t))v^{\ep,z}\left(t,x_i^N(t),x_j^N(t)\right)\partial_{\alpha}\phi(x_i^N(t))\partial_{\beta} \psi(x_j^N(t))\\
&+Q^{\alpha\beta}(x_i^N(t),x_j^N(t))\left(\partial_{y^\beta} v^{\ep,z}\right)\left(t,x_i^N(t),x_j^N(t)\right)\partial_{\alpha}\phi(x_i^N(t))\psi(x_j^N(t))\\
&+Q^{\alpha\beta}(x_i^N(t),x_j^N(t))\left(\partial_{x^\alpha}v^{\ep,z}\right) \left(t,x_i^N(t),x_j^N(t)\right)\phi(x_i^N(t))\partial_{\beta} \psi(x_j^N(t))\\
&+Q^{\alpha\beta}(x_j^N(t),x_i^N(t))v^{\ep,z}\left(t,x_i^N(t),x_j^N(t)\right)\partial_{\beta}\phi(x_i^N(t))\partial_{\alpha} \psi(x_j^N(t))\\
&+Q^{\alpha\beta}(x_j^N(t),x_i^N(t))\left(\partial_{x^\beta}v^{\ep,z}\right) \left(t,x_i^N(t),x_j^N(t)\right)\phi(x_i^N(t))\partial_{\alpha} \psi(x_j^N(t))\\
&+Q^{\alpha\beta}(x_j^N(t),x_i^N(t))\left(\partial_{y^\alpha}v^{\ep,z}\right) \left(t,x_i^N(t),x_j^N(t)\right)\partial_{\beta} \phi(x_i^N(t))\psi(x_j^N(t))\Big]
1_{\{m_i^N(t)=m,m_j^N(t)=n\}} \;;
\end{aligned}
\end{equation}
\begin{equation}\label{H5}
\begin{aligned}
&H_{5}:=\\
&\int_0^{T'}dt \frac{1}{2N^2}\sum_{i\neq j\in\cN(t)}\sum_{\alpha,\beta=1}^d\partial_\alpha\left(v^{\ep,z}\left(t,\cdot,x_j^N(t)\right)\phi(\cdot)\right) \left(x_i^N(t)\right)\partial_\beta\left(Q^{\alpha\beta}\left(x_i^N(t),x_i^N(t)\right)\right)\psi(x_j^N(t))1_{\{m_i^N(t)=m,m_j^N(t)=n\}}\\
&+\int_0^{T'}dt \frac{1}{2N^2}\sum_{i\neq j\in\cN(t)}\sum_{\alpha,\beta=1}^d\partial_\alpha\left(v^{\ep,z}\left(t,x_i^N(t),\cdot\right)\psi(\cdot)\right) \left(x_j^N(t)\right)\partial_\beta\left(Q^{\alpha\beta}\left(x_j^N(t),x_j^N(t)\right)\right)\phi(x_i^N(t))1_{\{m_i^N(t)=m,m_j^N(t)=n\}} \; ;
\end{aligned}
\end{equation}
we note here that only $H_1$ and $H_3$ involve second partial derivatives of $v^{\ep,z}$.
 
From the action of the coagulation generator $\cL^N_J$ \eqref{jump-gen} we get:
\begin{equation}\label{H6}
\begin{aligned}
H_{6}:=&\int_0^{T'}dt\frac{1}{N^2}\sum_{i\in\cN(t)}\sum_{k\in\cN(t), k\neq i}\frac{1}{N}\theta^\ep(x_i^N(t)-x_k^N(t))\\
&\cdot\Big[\sum_{j\in\cN(t), j\neq i,k}\frac{m_i^N(t)}{m_i^N(t)+m_k^N(t)}v^{\ep,z}(t,x_i^N(t),x_j^N(t))\phi(x_i^N(t))1_{\{m_i^N(t)+m_k^N(t)=m\}}\psi(x_j^N(t))1_{\{m_j^N(t)=n\}}\\
& + \sum_{j\in\cN(t), j\neq i,k}\frac{m_k^N(t)}{m_i^N(t)+m_k^N(t)}v^{\ep,z}(t,x_k^N(t),x_j^N(t))\phi(x_k^N(t))1_{\{m_i^N(t)+m_k^N(t)=m\}}\psi(x_j^N(t))1_{\{m_j^N(t)=n\}}\\
&-\sum_{ j\in\cN(t), j\neq i,k}v^{\ep,z}(t,x_i^N(t),x_j^N(t))\phi(x_i^N(t))1_{\{m_i^N(t)=m\}}\psi(x_j^N(t))1_{\{m_j^N(t)=n\}}\\
&-\sum_{j\in\cN(t), j\neq i,k}v^{\ep,z}(t,x_k^N(t),x_j^N(t))\phi(x_k^N(t))1_{\{m_i^N(t)=m\}}\psi(x_j^N(t))1_{\{m_j^N(t)=n\}}\Big]
\end{aligned}
\end{equation}
\begin{equation*}
\begin{aligned}
+&\int_0^{T'}dt\frac{1}{N^2}\sum_{i\in\cN(t)}\sum_{k\in\cN(t),k\neq i}\frac{1}{N}\theta^\ep(x_i^N(t)-x_k^N(t))\\
&\cdot\Big[\sum_{j\in\cN(t),j\neq i,k}\frac{m_i^N(t)}{m_i^N(t)+m_k^N(t)}v^{\ep,z}(t,x_j^N(t),x_i^N(t))\phi(x_j^N(t))1_{\{m_j^N(t)=m\}}\psi(x_i^N(t))1_{\{m_i^N(t)+m_k^N(t)=n\}}\\
& + \sum_{j\in\cN(t),j\neq i,k}\frac{m_k^N(t)}{m_i^N(t)+m_k^N(t)}v^{\ep,z}(t,x_j^N(t),x_k^N(t))\phi(x_j^N(t))1_{\{m_j^N(t)=m\}}\psi(x_k^N(t))1_{\{m_i^N(t)+m_k^N(t)=n\}}\\
&-\sum_{j\in\cN(t),j\neq i,k}v^{\ep,z}(t,x_j^N(t),x_i^N(t))\phi(x_j^N(t))1_{\{m_j^N(t)=m\}}\psi(x_i^N(t))1_{\{m_i^N(t)=n\}}\\
&-\sum_{j\in\cN(t),j\neq i,k}v^{\ep,z}(t,x_j^N(t),x_k^N(t))\phi(x_j^N(t))1_{\{m_i^N(t)=m\}}\psi(x_k^N(t))1_{\{m_k^N(t)=n\}}\Big] \; ;
\end{aligned}
\end{equation*}

\begin{align}\label{H7}
H_7:=&-\int_0^{T'}dt\frac{1}{N^3}\sum_{i\in\cN(t)}\sum_{k\in\cN(t),k\neq i}\theta^\ep(x_i^N(t)-x_k^N(t))v^{\ep,z}\left(t,x_i^N(t),x_k^N(t)\right)\phi(x_i^N(t))\psi(x_k^N(t))1_{\{m_i^N(t)=m,m_k^N(t)=n\}}.
\end{align}

\begin{remark}
This negative term $H_7$ arises because of the specific coagulation rule, namely if particles $(i,k)$ coagulate, then they are both removed from the system. There may be a new particle added, but it is of a different mass hence has to be reconsidered (in $H_6$). Unlike \cite{HR, HR3}, under our scaling \eqref{local} $H_7$ turns out to be negligible, see Lemma \ref{lem:H6}.
\end{remark}

Regarding the martingale terms, of which $M_1,M_2$ come from diffusion
\begin{align*}
M_1:=&\int_0^{T'}\frac{\lambda}{N^2}\sum_{i\in\cN(t)}\sum_{j\in\cN(t),j\neq i}\nabla\left[v^{\ep,z}\left(t,\cdot,x_j^N(t)\right)\phi(\cdot)\right]\left(x_i^N(t)\right)\psi(x_j^N(t))1_{\{m_i^N(t)=m,m_j^N(t)=n\}}\cdot d\beta_i(t)\\
&+\int_0^{T'}\frac{\lambda}{N^2}\sum_{j\in\cN(t)}\sum_{i\in\cN(t),i\neq j}\nabla\left[v^{\ep,z}\left(t,x_i^N(t),\cdot\right)\psi(\cdot)\right]\left(x_j^N(t)\right) \phi(x_i^N(t))1_{\{m_i^N(t)=m,m_j^N(t)=n\}}\cdot d\beta_j(t)
\end{align*}
whose quadratic variation is

\begin{equation}\label{B1}
\begin{aligned}
B_1&=\int_0^{T'}\frac{\lambda^2}{N^4}\sum_{i\in\cN(t)}\left(\sum_{j\in\cN(t),j\neq i}\nabla\left[v^{\ep,z}\left(t,\cdot,x_j^N(t)\right)\phi(\cdot)\right]\left(x_i^N(t)\right) \psi(x_j^N(t))1_{\{m_i^N(t)=m,m_j^N(t)=n\}}\right)^2dt\\
&+\int_0^{T'}\frac{\lambda^2}{N^4}\sum_{j=1}^{N(t)}\left(\sum_{i\in\cN(t),i\neq j}\nabla\left[v^{\ep,z}\left(t,x_i^N(t),\cdot\right)\psi(\cdot)\right]\left(x_j^N(t)\right)\phi(x_i^N(t))1_{\{m_i^N(t)=m,m_j^N(t)=n\}}\right)^2dt \;;
\end{aligned}
\end{equation}
and 
\begin{align*}
M_2&=\int_0^{T'}\sum_{k\in K}\sigma_k(x_i^N(t))\cdot\frac{1}{N^2}\sum_{i\in\cN(t)}\sum_{j\in\cN(t),j\neq i}\nabla\left[v^{\ep,z}\left(t,\cdot,x_j^N(t)\right)\phi(\cdot)\right]\left(x_i^N(t)\right) \psi(x_j^N(t))1_{\{m_i^N(t)=m,m_j^N(t)=n\}}dW_k(t)\\
&+\int_0^{T'}\sum_{k\in K}\sigma_k(x_j^N(t))\cdot\frac{1}{N^2}\sum_{j\in\cN(t)}\sum_{i\in\cN(t),i\neq j}\nabla\left[v^{\ep,z}\left(t,x_i^N(t),\cdot\right)\psi(\cdot)\right]\left(x_j^N(t)\right) \phi(x_i^N(t))1_{\{m_i^N(t)=m,m_j^N(t)=n\}}dW_k(t)
\end{align*}
whose quadratic variation is
\begin{equation}\label{B2}
\begin{aligned}
B_2&:=\int_0^{T'}\frac{1}{N^4}\sum_{k\in K}\left(\sigma_k(x_i^N(t))\cdot\sum_{i\neq j\in\cN(t)}\nabla\left[v^{\ep,z}\left(t,\cdot,x_j^N(t)\right)\phi(\cdot)\right]\left(x_i^N(t)\right) \psi(x_j^N(t))1_{\{m_i^N(t)=m,m_j^N(t)=n\}}\right)^2dt\\
&+\int_0^{T'}\frac{1}{N^4}\sum_{k\in K}\left(\sigma_k(x_j^N(t))\cdot\sum_{i\neq j\in\cN(t)}\nabla\left[v^{\ep,z}\left(t,x_i^N(t),\cdot\right)\psi(\cdot)\right]\left(x_j^N(t)\right)  \phi(x_i^N(t))1_{\{m_i^N(t)=m,m_j^N(t)=n\}}\right)^2dt \; ;
\end{aligned}
\end{equation}
and the jump part of the martingale $M_3$ (which we do not write explicitly) has its second moment bounded above by
\begin{equation}\label{B3}
\begin{aligned}
B_3:=&\int_0^{T'}dt\frac{4}{N^4}\sum_{i\in\cN(t)}\sum_{k\in\cN(t),k\neq i}\frac{1}{N}\theta^\ep(x_i^N(t)-x_k^N(t))\\
&\cdot\Big[\sum_{j\neq i,k}\frac{m_i^N(t)}{m_i^N(t)+m_k^N(t)}v^{\ep,z}(t,x_i^N(t),x_j^N(t))\phi(x_i^N(t))1_{\{m_i^N(t)+m_k^N(t)=m\}}\psi(x_j^N(t))1_{\{m_j^N(t)=n\}}\\
& + \sum_{j\neq i,k}\frac{m_k^N(t)}{m_i^N(t)+m_k^N(t)}v^{\ep,z}(t,x_k^N(t),x_j^N(t))\phi(x_k^N(t))1_{\{m_i^N(t)+m_k^N(t)=m\}}\psi(x_j^N(t))1_{\{m_j^N(t)=n\}}\\
&-\sum_{j\neq i,k}v^{\ep,z}(t,x_i^N(t),x_j^N(t))\phi(x_i^N(t))1_{\{m_i^N(t)=m\}}\psi(x_j^N(t))1_{\{m_j^N(t)=n\}}\\
&-\sum_{j\neq i,k}v^{\ep,z}(t,x_k^N(t),x_j^N(t))\phi(x_k^N(t))1_{\{m_i^N(t)=m\}}\psi(x_j^N(t))1_{\{m_j^N(t)=n\}}\\
&-v^{\ep,z}\left(t,x_i^N(t),x_k^N(t)\right)\phi(x_i^N(t))\psi(x_k^N(t))1_{\{m_i^N(t)=m,m_k^N(t)=n\}}\Big]^2
\end{aligned}
\end{equation}
\begin{equation*}
\begin{aligned}
&+\int_0^{T'}\frac{4}{N^4}\sum_{i\in\cN(t)}\sum_{k\in\cN(t),k\neq i}\frac{1}{N}\theta^\ep(x_i^N(t)-x_k^N(t))\\
&\cdot\Big[\sum_{j\neq i,k}\frac{m_i^N(t)}{m_i^N(t)+m_k^N(t)}v^{\ep,z}(t,x_j^N(t),x_i^N(t))\phi(x_j^N(t))1_{\{m_i^N(t)=m\}}\psi(x_i^N(t))1_{\{m_i^N(t)+m_k^N(t)=n\}}\\
& + \sum_{j\neq i,k}\frac{m_k^N(t)}{m_i^N(t)+m_k^N(t)}v^{\ep,z}(t,x_j^N(t),x_k^N(t))\phi(x_j^N(t))1_{\{m_j^N(t)=m\}}\psi(x_k^N(t))1_{\{m_i^N(t)+m_k^N(t)=n\}}\\
&-\sum_{j\neq i,k}v^{\ep,z}(t,x_j^N(t),x_i^N(t))\phi(x_j^N(t))1_{\{m_j^N(t)=m\}}\psi(x_i^N(t))1_{\{m_i^N(t)=n\}}\\
&-\sum_{j\neq i,k}v^{\ep,z}(t,x_j^N(t),x_k^N(t))\phi(x_j^N(t))1_{\{m_i^N(t)=m\}}\psi(x_k^N(t))1_{\{m_k^N(t)=n\}}\\
&-v^{\ep,z}\left(t,x_i^N(t),x_k^N(t)\right)\phi(x_i^N(t))\psi(x_k^N(t))1_{\{m_i^N(t)=m,m_k^N(t)=n\}}\Big]^2dt.
\end{aligned}
\end{equation*}
We also have the initial and terminal conditions
\begin{align*}
H_8:=&\frac{1}{N^2}\sum_{i\neq j\in\cN(0)}v^{\ep,z}\left(0,x_i(0),x_j(0)\right)\phi(x_i(0))\psi(x_j(0))1_{\{m_i(0)=m,m_j(0)=n\}};\\
H_9:=&\frac{1}{N^2}\sum_{i\neq j\in\cN(T')}v^{\ep,z}\left(T',x^N_i(T'),x^N_j(T')\right)\phi(x^N_i(T'))\psi(x^N_j(T'))1_{\{m^N_i(T')=m,m^N_j(T')=n\}}.
\end{align*}
We set up the following family (indexed by $z\in\R^d$) of auxiliary PDE terminal value problems, whose unique nonnegative solution is called $r^{\ep,z}(t,x,y):[0,T]\times\R^{d}\times\R^d\to\R_+$:
\begin{align}
\begin{cases}
\left[\partial_t+ \frac{\lambda^2}{2}\big(\Delta_x+\Delta_y\big)+\frac{1}{2}\sum_{\alpha,\beta=1}^d \big(Q^{\alpha\beta}(x,x)\partial^2_{x^\alpha x^\beta}+Q^{\alpha\beta}(y,y)\partial^2_{y^\alpha y^\beta}+Q^{\alpha\beta}(x,y)\partial^2_{x^\alpha y^\beta}+Q^{\alpha\beta}(y,x)\partial^2_{y^\alpha x^\beta}\big)\right]r^{\ep,z}\left(t,x,y\right)\\
=-\theta^\ep(x-y+z),
\quad  (t,x,y)\in[0,T]\times\R^{2d}\\\\
r^{\ep,z}(T,x,y)=0.\label{aux-pde}
\end{cases}
\end{align}
To be more transparant, if we denote $\bx:=(x,y)\in\R^{2d}$, $\Delta_\bx:=\Delta_x+\Delta_y$, 
\[
D^2_\bx:=\left(
\begin{matrix}
D^2_{xx}, D^2_{xy}\\
D^2_{yx}, D^2_{yy}
\end{matrix}\right)
\]
and the $(2d)\times(2d)$ non-negative definite matrix
\begin{align*}
\wh Q(\bx):=\left(
\begin{matrix}
Q(x,x), Q(x,y)\\
Q(y,x), Q(y,y)
\end{matrix}\right), \quad\bx=(x,y)
\end{align*}
then, \eqref{aux-pde} can be rewritten as 
\begin{align}\label{aux-pde-simp}
\begin{cases}
\left[\partial_t+\frac{\lambda^2}{2}\Delta_\bx+\frac{1}{2}\text{tr}\big(\wh Q(\bx)D^2_{\bx}\big)\right]r^{\ep,z}(t,\bx)=-\theta^\ep(x-y+z)\\
r^{\ep,z}(T,\bx)=0, \quad \bx=(x,y), t\in[0,T].
\end{cases}
\end{align}
To see that $\wh Q(\bx)$ is non-negative, take any $\xi=(\xi_1,\xi_2)\in\R^{2d}$, we have that 
\[
\xi\wh Q(\bx)\xi^T=\sum_{k\in K}|\xi_1\cdot\sigma_k(x)+\xi_2\cdot\sigma_k(y)|^2\ge0.
\]
Since $\theta^\ep\in C^\alpha(\R^d)$ for some $\alpha\in(0,1)$ and $\wh Q$ is smooth of class $C_b^\infty$, the solution $r^{\ep,z}(t,x,y)\in C^{1+\alpha/2, 2+\alpha}([0,T]\times\R^{2d})$, for every $\ep\in(0,1)$ and $z\in\R^d$ (cf. \cite[Theorem 8.10.1]{Kry}).
Let us denote the non-divergence form operator with $C^\infty_b(\R^{2d})$ coefficients
\begin{align}\label{UE}
\cA_\bx:= \frac{\lambda^2}{2}\Delta_\bx+\frac{1}{2}\text{tr}\left(\wh Q(\bx)D^2_\bx\right).
\end{align}
By the parabolic Maximum Principle, since $\theta^\ep\ge 0$, the unique solution $r^{\ep,z}(t,x,y)$ of \eqref{aux-pde-simp} is nonnegative. Since
\begin{align*}
v^{\ep,z}(t,x,y)=r^{\ep,z}(t,x,y)-r^{\ep,0}(t,x,y),
\end{align*}
by linearity it also follows that 
\begin{align}\label{eq:u}
\begin{cases}
\left[\partial_t+\frac{\lambda^2}{2}\Delta_\bx+\frac{1}{2}\text{tr}\big(\wh Q(\bx)D^2_\bx\big)\right]v^{\ep,z}(t,\bx)=-\theta^\ep(x-y+z)+\theta^\ep(x-y)\\
v^{\ep,z}(T,\bx)=0, \quad \bx=(x,y), t\in[0,T].
\end{cases}
\end{align}
From \eqref{H1}, \eqref{H3},  \eqref{eq:u}, we have that (recall $T'\le T$)
\begin{equation}
\begin{aligned}
H_{1}+H_{3}&=\int_0^{T'}dt \frac{1}{N^2}\sum_{i\neq j\in\cN(t)}\phi(x_i^N(t))\psi(x_j^N(t))1_{\{m_i^N(t)=m,m_j^N(t)=n\}}\\
&\qquad \cdot\left[\partial_t+ \frac{\lambda^2}{2}\Delta_\bx+\frac{1}{2}\text{tr}\big(\wh Q\big(x_i^N(t),x_j^N(t)\big)D^2_\bx\big)\right] v^{\ep,z}\left(t,x_i^N(t),x_j^N(t)\right)\\
&=- \int_0^{T'}dt\frac{1}{N^2}\sum_{i\neq j\in\cN(t)}\phi(x_i^N(t))\psi(x_j^N(t))1_{\{m_i^N(t)=m,m_j^N(t)=n\}} \\
&\qquad \cdot\left[\theta^\ep\left(x_i^N(t)-x_j^N(t)+z\right)-\theta^\ep\left(x_i^N(t)-x_j^N(t)\right)\right].
\end{aligned}
\end{equation}
In view of the identity from It\^o formula
\begin{align}\label{ito-terms}
H_1+...+H_9+M_1+M_2+M_3=0,
\end{align}
we can accomplish \eqref{key} if we can show that the rest of the terms in \eqref{ito-terms}, namely those apart from $H_1,H_3$, are all negligible in the sense that 
\begin{equation}\label{negligible}
\begin{aligned}
&\lim_{|z|\to0}\limsup_{N\to\infty}\E\sup_{T'\in[0,T]}|H_i|=0, \quad i\in\{2,4,5,6,7,8,9\}\\
&\lim_{|z|\to0}\limsup_{N\to\infty}\E\sup_{T'\in[0,T]}|M_i|=0, \quad i\in\{1,2,3\}.
\end{aligned}
\end{equation}
These terms only contain up to first partial derivatives of $v^{\ep,z}$, hence their regularity is strictly better than $\theta^\ep$. 

\medskip
The following key proposition provides uniform bounds on $r^\ep$ and its gradient. The proof is similar to \cite[Proposition 5]{FH}, due to which we omit some details. Recall that $C_0$ is the maximal radius of the compact support of $\theta$. 
\begin{proposition}\label{bounds}
Let $r^{\ep,z}(t,x,y)$ be the unique solution of the PDE \eqref{aux-pde-simp}. Then, there exists some finite constant $C(d, T, C_0, \\
\{\sigma_k\}_{k\in K}, \lambda)$ such that 
for any $x,y,z\in\R^d$, $t\in[0,T]$ and $\ep\le\ep_0$ for some small $\ep_0(C_0)$, we have that 
\begin{align}\label{unif-bd}
&r^{\ep,z}(t,x,y)\le 
\begin{cases}
Ce^{-C|y-x-z|^2}1_{\{|y-x-z|\ge 4\}}+C\left(|y-x-z|\vee \ep\right)^{2-d}1_{\{|y-x-z|<4\}}  ,    \quad d\ge3 \\\\
Ce^{-C|y-x-z|^2}1_{\{|y-x-z|\ge 4\}}+C\left|\log\left(|y-x-z|\vee \ep\right)\right|1_{\{|y-x-z|<4\}}, \quad d=2\\\\
Ce^{-C|y-x-z|^2}1_{\{|y-x-z|\ge 4\}}+C1_{\{|y-x-z|<4\}},   \quad d=1.
\end{cases}
\end{align}
\begin{equation}\label{unif-bd-grad}
\begin{aligned}
\left|\nabla_x r^{\ep,z}(t,x,y)\right|&\le
\begin{cases}
 Ce^{-C|y-x-z|^2}1_{\{|y-x-z|\ge 4\}}+C\left(|y-x-z|\vee \ep\right)^{1-d}1_{\{|y-x-z|<4\}},
\quad d\ge 2\\\\
 Ce^{-C|y-x-z|^2}1_{\{|y-x-z|\ge 4\}}+C\left|\log(|y-x-z|\vee \ep)\right|1_{\{|y-x-z|<4\}},
\quad d=1.
\end{cases}
\end{aligned}
\end{equation}
The same also holds for $\nabla_y$ in place of $\nabla_x$.
\end{proposition}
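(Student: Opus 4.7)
The plan is to represent $r^{\ep,z}$ by Feynman--Kac applied to the backward parabolic problem \eqref{aux-pde-simp}, and then extract the pointwise bounds from Aronson-type Gaussian heat-kernel estimates for the uniformly elliptic generator $\cA_\bx$. Let $\bX_s = (X_s, Y_s)$ be the $\R^{2d}$-valued Markov diffusion generated by $\cA_\bx$ in \eqref{UE}, with transition density $p(t,\bx;s,\bx')$. Because $\lambda>0$, the symbol satisfies $\lambda^2 I/2 + \wh Q(\bx)/2 \succeq \lambda^2 I/2$, so $\cA_\bx$ is uniformly elliptic on $\R^{2d}$ with smooth, bounded coefficients. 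Classical Aronson estimates then give
\[
p(t,\bx;s,\bx') \le C(s-t)^{-d}\exp\!\bigl(-c|\bx'-\bx|^2/(s-t)\bigr), \quad |\nabla_\bx p| \le C(s-t)^{-d-1/2}\exp\!\bigl(-c|\bx'-\bx|^2/(s-t)\bigr),
\]
and Feynman--Kac applied to \eqref{aux-pde-simp} yields
\[
r^{\ep,z}(t,x,y) = \int_t^T \!\!\iint_{\R^{2d}} p(t,(x,y);s,(x',y'))\,\theta^\ep(x'-y'+z)\,dx'\,dy'\,ds.
\]

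To extract the claimed scaling, set $\xi := x-y+z$ and change variables $(x',y')\mapsto(u,w)$ with $u := x'-x$ and $w := x'-y'+z$; the Jacobian is one and the quadratic exponent becomes $|u|^2 + |u-w+\xi|^2 = 2|u-(w-\xi)/2|^2 + |w-\xi|^2/2$. Integrating out $u$ against the Gaussian supplies a factor $C(s-t)^{d/2}$ and concentrates $w$ around $\xi$, leaving
\[
r^{\ep,z}(t,x,y) \le C \int_0^{T-t}(G_\tau \ast \theta^\ep)(\xi)\,d\tau, \qquad G_\tau(u) := \tau^{-d/2}\exp(-c|u|^2/\tau).
\]
For $|\xi|\ge 4$, since $\theta^\ep$ is supported in $\B(0,C_0\ep)$ one has $|w-\xi|\ge |\xi|/2$ on the support of $\theta^\ep$ (for $\ep\le\ep_0$ small), so $(G_\tau\ast\theta^\ep)(\xi)\le C\tau^{-d/2}e^{-c|\xi|^2/(4\tau)}$; the substitution $u = |\xi|^2/\tau$ together with $T-t\le T$ yields the exponential bound $Ce^{-C|\xi|^2}$. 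For $|\xi|<4$, use the Green's-function identity $\int_0^\infty G_\tau(u)\,d\tau = C|u|^{2-d}$ (for $d\ge 3$; with a logarithmic analogue for $d=2$ coming from the cutoff at $\tau=T-t$; bounded for $d=1$) to rewrite the bound as $C\int\theta^\ep(w)|w-\xi|^{2-d}dw$. For $|\xi|\ge 2C_0\ep$ the factor $|w-\xi|^{2-d}$ is bounded by $C|\xi|^{2-d}$ on the support of $\theta^\ep$; for $|\xi|\le 2C_0\ep$, scaling $w = \ep v$ gives $C\ep^{2-d}\int\theta(v)|v-\xi/\ep|^{2-d}dv$, and the inner integral is uniformly bounded since $|v|^{2-d}$ is locally integrable in $\R^d$. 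Combining these cases produces the unified bound $C(|\xi|\vee\ep)^{2-d}$, and similarly $C|\log(|\xi|\vee\ep)|$ for $d=2$ and $C$ for $d=1$.

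The gradient estimate \eqref{unif-bd-grad} follows from the same strategy but starting from the gradient bound for $p$ above, which introduces an extra factor $(s-t)^{-1/2}$ in the $\tau$-integral; the time integral of the resulting kernel produces $|u|^{1-d}$ (with the analogous logarithmic variant in $d=1$), and the same convolution with $\theta^\ep$ gives the claimed $(|\xi|\vee\ep)^{1-d}$ singularity together with Gaussian decay at infinity.

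The delicate point --- and the main obstacle --- is to match the two singularities, that of $\theta^\ep$ (peak of order $\ep^{-d}$) and that of the heat kernel at $\tau=0$, so as to obtain a bound uniform in $\ep$ valid both inside and outside the support of the mollifier. In the regime $|\xi|\lesssim\ep$ neither the pointwise estimate $G_\tau\le C\tau^{-d/2}$ nor the convolution bound $G_\tau\ast\theta^\ep\le C\ep^{-d}$ is sharp by itself; one interpolates via $(G_\tau\ast\theta^\ep)(\xi)\le C\min(\tau^{-d/2},\ep^{-d})$ and splits the $\tau$-integral at the crossover $\tau=\ep^2$, which gives $\int_0^{\ep^2}\ep^{-d}d\tau + \int_{\ep^2}^{T-t}\tau^{-d/2}d\tau \asymp \ep^{2-d}$ for $d\ge 3$ and the stated logarithmic or bounded behavior in $d=2,1$. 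This is precisely the calculation carried out in \cite[Proposition 5]{FH}, on which the present argument is modeled and whose details are therefore omitted.
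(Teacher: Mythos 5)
Your proof is correct and rests on the same two pillars as the paper's: the Duhamel/Feynman--Kac representation of $r^{\ep,z}$ via the $(2d)$-dimensional transition density of the uniformly elliptic operator $\cA_\bx$, Aronson-type Gaussian upper bounds for that density and its gradient (the paper cites \cite{IKO} for these), and a final case analysis splitting $|y-x-z|\ge 4$, $\ep\lesssim|y-x-z|<4$, and $|y-x-z|\lesssim\ep$. Where you genuinely diverge is in the order of integration: the paper integrates in time first, obtaining the $2d$-dimensional elliptic Green-function bound $\int_0^t q_s\,ds\le C(|x-x'|^2+|y-y'|^2)^{\frac{2-2d}{2}}e^{-(\cdots)/C}+C1_{\{\cdots\le1\}}$, and then performs the full $\R^{2d}$ spatial integration against $\theta^\ep$ with a two-variable case analysis in $(\gamma,\zeta)$; you instead complete the square in the transverse variable $u$ and integrate it out against the Gaussian first, collapsing the problem to the one-dimensional-in-space statement $r^{\ep,z}\le C\int_0^{T-t}(G_\tau\ast\theta^\ep)(\xi)\,d\tau$ on $\R^d$, after which the $|\xi|\vee\ep$ dichotomy is handled either by the Newtonian-potential convolution bound or by the $\min(\tau^{-d/2},\ep^{-d})$ interpolation with crossover at $\tau=\ep^2$. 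Your reduction is somewhat cleaner and makes the origin of the exponent $2-d$ (rather than $2-2d$) transparent, at the price of having to verify the change of variables and the exact completion of the square; the paper's route avoids that bookkeeping but pays with a longer case analysis over the ball $\B\bigl(\frac{x+z-y-\zeta}{2},|x+z-y-\zeta|\bigr)$ and its complement. Both yield the same constants up to irrelevant factors, and your treatment of the gradient bound (extra factor $\tau^{-1/2}$, exponent $1-d$, logarithm in $d=1$) matches the paper's.
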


\begin{remark}
In particular, we have the following useful crude bounds: there exists $C=C(d,T,C_0,\{\sigma_k\}_{k\in K}, \lambda)$, such that for any $x,y,z\in\R^d$, $t\in[0,T]$  and $\ep\in(0,1)$ small
\begin{align}
r^{\ep,z}(t,x,y)&\le 
\begin{cases}\label{unif-bd-simp}
C\ep^{2-d}, \quad d\ge 3\\
C|\log \ep|, \quad d=2\\
C, \quad d=1
\end{cases}\\
|\nabla_x r^{\ep,z}(t,x,y)|&\le 
\begin{cases}\label{unif-grad-simp}
C\ep^{1-d}, \quad d\ge 2\\
C|\log\ep|, \quad d=1.
\end{cases}
\end{align}
\end{remark}

\begin{remark}
Let us explain the first inequality in \eqref{unif-bd-simp} by a simple heuristic argument.
First, notice that $\theta_{\epsilon}\left(  x\right)  \leq c\epsilon
^{-d}1_{\B\left(  0,\epsilon C_0\right)  }\left(  x\right)  $ for some constant
$c>0$; take $C_0=1$ below for notational simplicity. Due to the Gaussian upper
bound described at the beginning of the proof, the problem reduces to prove
that%
\[
\epsilon^{-d}\int_{0}^{t}\P\left(  B_{s}\in S\left(  y-x-z,\epsilon\right)
\right)  ds\leq C\epsilon^{-d+2}%
\]
for some constant $C>0$, where $\left(  B_{t}\right)  $ is a standard Brownian motion
in $\mathbb{R}^{2d}$ and $S\left(  u,\epsilon\right)  $ is the strip in
$\mathbb{R}^{2d}$ defined as%
\[
S\left(  u,\epsilon\right) : =\left\{  \left(  x^{\prime},y^{\prime}\right)
\in\mathbb{R}^{2d}:\left\vert y^{\prime}-x^{\prime}-u\right\vert \leq
\epsilon\right\}, \quad u\in\R^d.
\]
When $\left\vert y-x-z\right\vert $ is not infinitesimal in $\epsilon$, say
$\left\vert y-x-z\right\vert \geq4$ as in the proof above, then the strip
$S\left(  y-x-z,\epsilon\right)  $ is far from the origin (where $\left(
B_{t}\right)  $ starts deterministically), hence, thanks also to the
exponential decay of the Brownian density, the problem reduces to estimate
(where $Leb$ denotes Lebesgue measure in $\mathbb{R}^{2d}$)
\[
\epsilon^{-d}\int_{0}^{t}Leb\left(  S\left(  y-x-z,\epsilon\right)  \cap
\B\left(  0,1\right)  \right)  ds.
\]
It is easily seen that $Leb\left(  S\left(  y-x-z,\epsilon\right)  \cap
\B\left(  0,1\right)  \right)  \leq\epsilon^{d}$ (up to constants) and thus the
bound above holds (see also the first inequality in \eqref{unif-bd}). If $\left\vert
y-x-z\right\vert $ is very small, on the contrary, let us examine the worst
case,
\[
\epsilon^{-d}\int_{0}^{t}\P\left(  B_{s}\in S\left(  0,\epsilon\right)
\right)  ds.
\]
The problem is that $\P\left(  B_{s}\in S\left(  0,\epsilon\right)  \right)  $
is not infinitesimal with $\epsilon$ for $s=0$. The very crude but convincing
argument is that the Brownian motion $B_{s}$ remains in the strip $S\left(
0,\epsilon\right)  $ for a time of order $\epsilon^{2}$, contributing to the
integral with the term
\[
\int_{0}^{\epsilon^{2}}\P\left(  B_{s}\in S\left(  0,\epsilon\right)  \right)
ds\sim\epsilon^{2}.
\]
After this time the estimate is better. Thus there is an addend $\epsilon
^{-d}\epsilon^{2}$ in the estimate. 
\end{remark}

\begin{proof}
We first show \eqref{unif-bd}, starting with the representation of solution of \eqref{aux-pde-simp}
\begin{align*}
r^{\ep,z}(T-t,x,y)=&\int_{[0,t]\times\R^{2d}}q_{t-s}(x,y;x',y')\theta^\ep(x'-y'+z)dsdx'dy'
\end{align*}
where $q_t(x,y; x',y')$ is the $(2d)$-dimensional heat kernel (i.e. fundamental solution) associated with the operator $\cA_\bx$ \eqref{UE}. Since the latter operator is uniformly elliptic, we have Gaussian upper (and lower) bounds for the heat kernel (cf. \cite[Theorem 1]{IKO})
\begin{align*}
&q_t(x,y;x',y')\le \frac{C}{t^{\frac{2d}{2}}}e^{-\frac{|x-x'|^2+|y-y'|^2}{Ct}}
\end{align*}
for any $t>0, x,y,x',y'\in\R^d$, where $C=C\left(d, \{\sigma_k\}_{k\in K}, \lambda\right)$ finite. One can show, for details cf. the proof of \cite[Proposition 5, pp. 615]{FH}, that for some $C=C(d,T)$ and any $t\in[0,T]$, $x,y,x',y'\in\R^d$ and $d>1$ (hence $2d>2$), we have that
\begin{align*}
&\int_0^tq_{s}(x,y;x',y')ds\le \int_0^t\frac{C}{s^{\frac{2d}{2}}}e^{-\frac{|x-x'|^2+|y-y'|^2}{Cs}}ds\\
&\le C\left(|x-x'|^2+|y-y'|^2\right)^{\frac{2-2d}{2}}e^{-\frac{|x-x'|^2+|y-y'|^2}{C}}+C1_{\{|x-x'|^2+|y-y'|^2\le 1\}}.
\end{align*}
Now let us integrate in $x',y'$ of the preceding expression against $\theta^\ep(x'-y'+z)$, up to constant $C$ we have that 
\begin{align}
&\iint_{\R^{2d}}\left(|x-x'|^2+|y-y'|^2\right)^{\frac{2-2d}{2}}e^{-\frac{|x-x'|^2+|y-y'|^2}{C}}\theta^\ep(x'-y'+z)dx'dy' \nonumber\\
&\quad +\iint_{\R^{2d}}1_{\{|x-x'|^2+|y-y'|^2\le 1\}}\theta^\ep(x'-y'+z)dx'dy'  \nonumber\\
&\overset{\zeta=x'-y'+z,\; \gamma=x-x'}{=}\iint_{\R^{2d}}\left(|\gamma|^2+|y-x-z+\gamma+\zeta|^2\right)^{\frac{2-2d}{2}}e^{-\frac{|\gamma|^2+|y-x-z+\gamma+\zeta|^2}{C}}\theta^\ep(\zeta)d\gamma d\zeta   \label{1st-term}\\
&\quad +\iint_{\R^{2d}}1_{\{|\gamma|^2+|y-x-z+\gamma+\zeta|^2\le 1\}}\theta^\ep(\zeta)d\gamma d\zeta.\label{2nd-term}
\end{align}
We first argue about the exponential decay when $|y-x-z|\ge 4$. In this case, if we look at \eqref{2nd-term}, since the support of $\zeta$ is in $|\zeta|\le C_0\ep$, and the support of $\gamma$ is in $|\gamma|\le 1$ (otherwise the indicator in \eqref{2nd-term} is $0$), we see that when $|y-x-z|\ge 4$, then necessarily $|y-x-z+\gamma+\zeta|>1$ for $\ep$ small enough, rendering the indicator again $0$. Thus, we can focus solely on the first term \eqref{1st-term} and we argue exponential decay in $|y-x-z|$ when $|y-x-z|\ge 4$.

We separate the integral in $\gamma$ of \eqref{1st-term} according to $|\gamma|\le |y-x-z|/2$ and $|\gamma|>|y-x-z|/2$. In the former case, $|y-x-z+\gamma+\zeta|\ge |y-x-z|/4\ge 1$ for $\ep$ small enough,  thus we can bound part of the integral by (noting $2-2d<0$, and $\int\theta^\ep(\zeta)d\zeta=1$)
\begin{align*}
C\int_{|\gamma|\le |y-x-z|/2}e^{-\frac{|y-x-z|^2}{C}}d\gamma\le C|y-x-z|^de^{-\frac{|y-x-z|^2}{C}}\le C'e^{-\frac{|y-x-z|^2}{C'}}
\end{align*}
for some $C'>C$. In the latter case that $|\gamma|>|y-x-z|/2\ge 2$, we can bound the other part of the integral by
\begin{align*}
C\int_{|\gamma|> |y-x-z|/2}e^{-\frac{|\gamma|^2}{C}}d\gamma\le C|y-x-z|^{d-2}e^{-\frac{|y-x-z|^2}{C}}\le C'e^{-\frac{|y-x-z|^2}{C'}}.
\end{align*}
That is, we have the claimed exponential decay when $|y-x-z|\ge 4$. 

Now we turn to the case when $|y-x-z|<4$. Here the term \eqref{2nd-term} is merely bounded (due to $\int\theta^\ep=1$ and that the indicator is over a compact ball), so the total bound cannot be smaller than a constant bound. We now focus on the first integral \eqref{1st-term}. We separate two cases: $|y-x-z|\ge 4C_0\ep$ or $|y-x-z|< 4C_0\ep$.

If  $|y-x-z|\ge 4C_0\ep$ and $|\gamma|\le |y-x-z|/4$, then $|y-x-z+\gamma+\zeta|\ge |y-x-z|/4$ since $|\zeta|\le C_0\ep$, we can bound part of the integral \eqref{1st-term} by (ignore the exponential)
\begin{align*}
C\int_{|\gamma|\le|y-x-z|/4}|y-x-z|^{2-2d}d\gamma\le C|y-x-z|^{2-d}.
\end{align*}
If  $|y-x-z|\ge 4C_0\ep$ and $|\gamma|>|y-x-z|/4$, then we bound the other part of the integral by 
\begin{align*}
&C\int_{|\gamma|>|y-x-z|/4}|\gamma|^{2-2d}e^{-|\gamma|^2/C}d\gamma\le C\int_{1\ge |\gamma|>|y-x-z|/4}|\gamma|^{2-2d}d\gamma+\int_{|\gamma|>1}e^{-|\gamma|^2/C}d\gamma\\
&\le 
\begin{cases}
C|y-x-z|^{2-d}+C, \quad d\ge 3\\
C|\log|y-x-z||+C, \quad d=2
\end{cases}
\le C
\begin{cases}
|y-x-z|^{2-d}, \quad d\ge 3\\
|\log|y-x-z||, \quad d=2.
\end{cases}
\end{align*}
If on the other hand $|y-x-z|< 4C_0\ep$, then we have that $|y-x-z+\zeta|\le 5C_0\ep$. We note that for any $\gamma\in\R^d$, 
\[
|\gamma|\vee|y-x-z+\gamma+\zeta|\ge \frac{|x+z-y-\zeta|}{2},
\]
hence
\[
|\gamma|^2+|y-x-z+\gamma+\zeta|^2\ge \frac{|x+z-y-\zeta|^2}{4}.
\]
We separate the integral \eqref{1st-term} according to $\gamma\in\B\left(\frac{x+z-y-\zeta}{2}, |x+z-y-\zeta|\right)$ or otherwise. In the former case, we can bound part of the integral
\begin{align*}
&C\int_{\R^d}\int_{\B\left(\frac{x+z-y-\zeta}{2}, |x+z-y-\zeta|\right)} |x+z-y-\zeta|^{2-2d}\theta^\ep(\zeta)d\gamma d\zeta \le C\int |x+z-y-\zeta|^{2-d}\theta^\ep(\zeta)d\zeta\\
&\le C\ep^{-d}\int_{|\zeta|\le C_0\ep}|x+z-y-\zeta|^{2-d}d\zeta \le C\ep^{-d}\int_{|w|\le 5C_0\ep} |w|^{2-d}dw\le C\ep^{2-d}
\end{align*}
where we have bounded $\|\theta^\ep\|_\infty\le \ep^{-d}\|\theta\|_\infty$ and used $|y-x-z+\zeta|\le 5C_0\ep$. The second part of \eqref{1st-term} we bound by
\begin{align*}
&C\int_{\R^d}\int_{\R^d\backslash\B\left(\frac{x+z-y-\zeta}{2}, |x+z-y-\zeta|\right)} |\gamma|^{2-2d}e^{-|\gamma|^2/C}\theta^\ep(\zeta)d\gamma d\zeta
\le C\int\int_{|\gamma|>\frac{x+z-y-\zeta}{2}}|\gamma|^{2-2d}e^{-|\gamma|^2/C}\theta^\ep(\zeta)d\gamma d\zeta\\
&\le C
\begin{cases}
\int_{|\zeta|\le C_0\ep}|x+z-y-\zeta|^{2-d}\theta^\ep(\zeta)d\zeta, \quad d\ge 3\\
\int_{|\zeta|\le C_0\ep}|\log|x+z-y-\zeta||\theta^\ep(\zeta)d\zeta, \quad d=2
\end{cases}\\
&\le C\ep^{-d}
\begin{cases}
\int_{|w|\le 5C_0\ep}|w|^{2-d}d\zeta, \quad d\ge 3\\
\int_{|w|\le 5C_0\ep}|\log|w||d\zeta, \quad d=2
\end{cases}
\le C
\begin{cases}
\ep^{2-d}, \quad d\ge 3\\
|\log\ep|, \quad d=2.
\end{cases}
\end{align*}

To summarise, we showed here that when $|y-x-z|<4$, then a bound of the form  $C|y-x-z|^{2-d}\wedge C\ep^{2-d}$ holds for $d\ge 3$ and $C|\log|y-x-z||\wedge C|\log\ep|$ for $d=2$. This, together with the exponential decay when $|y-x-z|\ge 4$, completes the proof of the first two items of \eqref{unif-bd}. 

The $d=1$ case of \eqref{unif-bd} requires some changes. Since $2d=2$, we have that (cf. \cite[Proposition 5, pp. 615]{FH}) 
\begin{align*}
\int_0^tq_{s}(x,y;x',y')ds\le Ce^{-\frac{|x-x'|^2+|y-y'|^2}{C}}-C\log\left(|x-x'|^2+|y-y'|^2\right)1_{\{|x-x'|^2+|y-y'|^2\le 1\}}
\end{align*} 
and thus, 
\begin{align*}
\iint_{\R^{2}}\int_0^tq_{s}(x,y;x',y')dsdx'dy'&\le
C\iint_{\R^{2}}e^{-\frac{|\gamma|^2+|y-x-z+\gamma+\zeta|^2}{C}}\theta^\ep(\zeta)d\gamma d\zeta  \\
&\quad -C\iint_{\R^{2d}}\log\left(|\gamma|^2+|y-x-z+\gamma+\zeta|^2\right)1_{\{|\gamma|^2+|y-x-z+\gamma+\zeta|^2\le 1\}}\theta^\ep(\zeta)d\gamma d\zeta.
\end{align*}
Proceeding similarly as the $d\ge 2$ case yields the thesis, where the constant bound is essentially due to the integrability of $-\log r$ function near $r=0$. The gradient bounds \eqref{unif-bd-grad} can also be proved analogously; we only comment that we  start with
\begin{align*}
\nabla_x r^{\ep,z}(T-t,x,y)=&\int_{[0,t]\times\R^{2d}}\nabla_x q_{t-s}(x,y;x',y')\theta^\ep(x'-y'+z)dsdx'dy'
\end{align*}
and the fact that due to $\cA_\bx$ uniformly elliptic, the gradient of its $(2d)$-dimensional heat kernel satisfies 
\begin{align*}
|\nabla_x q_t(x,y;x',y')|\le \frac{C}{t^{\frac{1+2d}{2}}}e^{-\frac{|x-x'|^2+|y-y'|^2}{Ct}}
\end{align*}
for any $t>0$, $x,y,x',y'\in\R^d$ 
(cf. \cite[Theorem 1]{IKO}). Correspondingly, we have that for any $d\ge 1$ (cf. \cite[Proposition 5, pp. 615]{FH}),
\begin{align*}
&\int_0^t|\nabla_xq_{s}(x,y;x',y')|ds\le \int_0^t\frac{C}{s^{\frac{1+2d}{2}}}e^{-\frac{|x-x'|^2+|y-y'|^2}{Cs}}ds\\
&\le C\left(|x-x'|^2+|y-y'|^2\right)^{\frac{1-2d}{2}}e^{-\frac{|x-x'|^2+|y-y'|^2}{C}}+C1_{\{|x-x'|^2+|y-y'|^2\le 1\}}.
\end{align*}
The rest of the proof is analogous to that for the uniform bounds given above.
\end{proof}

\section{Bounding various terms}\label{sec:bounds}
Building on the It\^o-Tanaka procedure of Section \ref{ito-tanaka}, still working on the original probability space $(\Omega,\cF,\P)$ we complete the proof of Proposition \ref{ppn:tanaka} by showing the negligibility of all the minor terms in \eqref{ito-terms}. 

To prepare, for each $N\in\N$ let us denote by $[0,\tau_i^N)$ the lifespan of particle $i=1,...,N$ in our system, namely at the stopping time $\tau_i^N\in(0,\infty]$ particle $i$ is removed from the system due to the coagulation rule. Let us also consider on the same probability space $(\Omega,\cF,\P)$ an auxiliary ``free'' particle system $(x_i^f(t))_{i=1}^\infty$, that obeys the same dynamics as \eqref{SDE}, with $x_i^f(0)=x_i(0)$, $i\in\N$, and the {\it{same}} driving Brownian motions $\{W_t^k\}_{k\in K}$, $\{\beta_i(t)\}_{i=1}^\infty$, but without coagulation of masses. In other words, 
\begin{align}\label{SDE-free}
dx_i^f(t)=\sum_{k\in K}\sigma_k\left(x_i^f(t)\right)\,\circ\,dW^k_t+ \lambda \, d\beta_i(t),\quad i\in\N
\end{align}
with lifespan $[0,\infty)$. This auxiliary particle system dominates our true system in the following sense: $\P$-a.s.
\begin{align}\label{lifespan}
x_i^f(t)=x_i^N(t), \quad t\in[0,\tau_i^N), \; i=1,...,N
\end{align}
due to the fact that the coefficients of the SDE \eqref{SDE} do not depend on the mass parameter.

We start with a lemma proving that any fixed number of particles in the auxiliary system have a joint density that is uniformly bounded and decays exponentially at infinity. 
Recall that $\Gamma$ is the uniform upper bound on the initial density of an individual particle, and $R$ the maximal radius of its compact support.
\begin{proposition}\label{bdd-density}
Let $\ell\in\N, d\ge 1$ be fixed, and denote
\begin{align*}
\ovl{X}_t^f& := \left(x_1^f(t),...,x_\ell^f(t)\right)\in\R^{\ell d}\\
 \ovl x&:=\left(x_1,...,x_\ell\right)\in\R^{\ell d}
\end{align*}
where $x_i^f(t)$ are defined above. Then $\ovl X_t^f$ has a probability density $\bp_t(\ovl x)$ on $\R^{\ell d}$ that satisfies for any $t\in[0,T]$,
\begin{align}\label{joint-HK}
\bp_t(\ovl x)\le  c^{-1}e^{-c|\ovl x|}
\end{align}
where $c=c(d,\lambda,\ell, T, R, \Gamma, \{\sigma_k\}_{k\in K}, )$ is a positive constant.
\end{proposition}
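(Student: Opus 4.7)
My plan is to view the joint process $\ovl X_t^f$ as the solution of a single SDE on $\R^{\ell d}$ with smooth, bounded, uniformly elliptic coefficients, and then convolve the Aronson-type Gaussian upper bound for its transition density against the initial density. Writing \eqref{SDE-free} in It\^o form, I obtain a joint SDE
\[
d\ovl X_t^f = \wt b(\ovl X_t^f)\,dt + \wt\sigma(\ovl X_t^f)\,d\wt W_t
\]
driven by $|K|+\ell$ independent Brownian motions, where $\wt b$ collects the corrections $\tfrac{1}{2}\sum_{k\in K}(\nabla\sigma_k\cdot\sigma_k)(x_i)$ componentwise and the joint diffusion matrix $\wt a:=\wt\sigma\wt\sigma^T$ on $\R^{\ell d}$ has $(i,j)$-th $d\times d$ block $Q(x_i,x_j)+\lambda^2\delta_{ij} I_d$. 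Both $\wt b$ and $\wt\sigma$ are $C_b^\infty$, and for any $\xi=(\xi_1,\dots,\xi_\ell)\in\R^{\ell d}$,
\[
\xi^T\wt a(\ovl x)\xi = \lambda^2|\xi|^2 + \sum_{k\in K}\Big|\sum_{i=1}^\ell\xi_i\cdot\sigma_k(x_i)\Big|^2 \;\ge\; \lambda^2|\xi|^2,
\]
so the joint generator is uniformly elliptic on $\R^{\ell d}$ with ellipticity constant $\lambda^2$. By classical parabolic theory (\cite[Theorem 1]{IKO}, as already used in Proposition \ref{bounds}), the process admits a transition density $K_t(\ovl y,\ovl x)$ with
\[
K_t(\ovl y,\ovl x)\le C_1 t^{-\ell d/2}\exp\!\big(-|\ovl y-\ovl x|^2/(C_1 t)\big),\quad t\in(0,T].
\]

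Next I would identify the initial joint density: because the masses $\{m_i(0)\}$ are i.i.d.\ and, conditional on them, the positions $\{x_i(0)\}$ are independent with density $p_{m_i(0)}$, the unconditional density of $\ovl X_0^f$ factorises as $\bp_0(\ovl x)=\prod_{i=1}^\ell P(x_i)$ with $P(x):=\sum_m r_m p_m(x)$, so Condition \ref{cond:init} gives $\|\bp_0\|_\infty\le\Gamma^\ell$ and $\mathrm{supp}\,\bp_0\subset\B(0,R)^\ell$. Thus $\bp_t(\ovl x)=\int_{\R^{\ell d}}K_t(\ovl y,\ovl x)\bp_0(\ovl y)\,d\ovl y$ exists, and I would extract two complementary estimates from this convolution. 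First, a uniform bound: integrating the Gaussian kernel against $\bp_0\le\Gamma^\ell$ over all of $\R^{\ell d}$ and using $\int_{\R^{\ell d}}e^{-|\ovl y-\ovl x|^2/(C_1 t)}d\ovl y=(\pi C_1 t)^{\ell d/2}$ yields $\bp_t(\ovl x)\le \Gamma^\ell C_1(\pi C_1)^{\ell d/2}$, independent of $t$ and $\ovl x$. Second, when $|\ovl x|\ge 2\sqrt{\ell}\,R$, on $\mathrm{supp}\,\bp_0$ we have $|\ovl y-\ovl x|\ge|\ovl x|/2$, so
\[
\bp_t(\ovl x)\le \Gamma^\ell\,\mathrm{Vol}(\B(0,R)^\ell)\, C_1 t^{-\ell d/2} e^{-|\ovl x|^2/(4C_1 t)}.
\]
The function $t\mapsto t^{-\ell d/2}e^{-|\ovl x|^2/(4C_1 t)}$ has a critical point of order $|\ovl x|^2$, which exceeds $T$ once $|\ovl x|$ is sufficiently large; the supremum over $t\in(0,T]$ is then attained at $t=T$, giving Gaussian decay $\le C T^{-\ell d/2}e^{-|\ovl x|^2/(4C_1 T)}$ and \emph{a fortiori} a bound of the form $c^{-1}e^{-c|\ovl x|}$. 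Stitching together the uniform bound in the bounded regime and the Gaussian bound in the tail regime yields \eqref{joint-HK}.

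The key point, and essentially the only real subtlety, is verifying that the joint diffusion matrix on $\R^{\ell d}$ is \emph{uniformly} elliptic rather than merely hypoelliptic: the environmental contribution $\wt Q$ alone has rank at most $|K|$ and is degenerate as soon as $|K|<\ell d$, which is typical when $\ell$ is large or $|K|$ is small. It is precisely the per-particle independent noise $\lambda\,d\beta_i$ with $\lambda>0$ that lifts the spectrum of $\wt a$ uniformly above $\lambda^2$ and enables the invocation of the full-dimensional Aronson/IKO heat-kernel bound. This also clarifies why $\lambda$ being a fixed positive constant independent of mass is structurally important, as foreshadowed in Remark 1(d).
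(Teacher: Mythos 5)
Your argument is correct and follows essentially the same route as the paper: rewrite the $\ell$-tuple as a single It\^o diffusion on $\R^{\ell d}$ with $C_b^\infty$ coefficients and uniformly elliptic diffusion matrix (ellipticity supplied by the $\lambda^2 I$ block, exactly as the paper uses implicitly), invoke the Gaussian upper bound of \cite[Theorem 1]{IKO} for its transition density, and convolve against the initial density bounded by $\Gamma^\ell$ and supported in $\B(0,\sqrt{\ell}R)$; your near/far splitting with the optimization over $t$ is just a slightly different bookkeeping of the paper's Gaussian-tail step $\P(|\ovl Y_t^0|\ge |\ovl x|-\sqrt{\ell}R)$. The only slip is immaterial: the driving noise consists of $|K|+\ell d$ scalar Brownian components, not $|K|+\ell$.
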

\begin{proof}
Let us denote 
\begin{align*}
\ovl B_t&:=(\beta_1(t),...,\beta_\ell(t))\in\R^{\ell d}\\
\Sigma_k(\ovl x)&:=(\sigma_k(x_1),...,\sigma_k(x_\ell))\in\R^{\ell d}\\
\left(\nabla\Sigma_k\cdot\Sigma_k\right)(\ovl x)&:=\left(\left(\nabla\sigma_k\cdot\sigma_k\right)(x_1), ..., \left(\nabla\sigma_k\cdot\sigma_k\right)(x_\ell)\right)\in\R^{\ell d}
\end{align*}
where $\nabla\sigma_k\cdot\sigma_k$ is defined in \eqref{strat-corr}. 
Then, by \eqref{SDE-free} the $\ell$-tuple $\ovl X_t^f$ satisfies the SDE in $\R^{\ell d}$:

\begin{align*}
d\ovl X_t^f &= \sum_{k\in K}\Sigma_k(\ovl X_t^f)\,\circ\,d W^k_t+\lambda \, d\ovl B_t\\
&=\sum_{k\in K}\Sigma_k(\ovl X_t^f)\,d W^k_t+\frac{1}{2}\sum_{k\in K}\left(\nabla\Sigma_k\cdot\Sigma_k\right)(\ovl X_t^f)dt
+\lambda \,d\ovl B_t.
\end{align*}
This is an It\^o diffusion associated with the operator with coefficients of class $C_b^\infty(\R^{\ell d})$ that acts on functions on $\R^{\ell d}$,
\begin{align*}
\mathscr L=\frac{\lambda^2}{2}\Delta+\frac{1}{2}\sum_{k\in K}\text{tr}\left(\Sigma_k^T\Sigma_k(\ovl x)D^2\right)+\frac{1}{2}\sum_{k\in K}\left(\nabla\Sigma_k\cdot\Sigma_k\right)(\ovl x)\cdot D
\end{align*} 
thus admits a transition density (i.e. heat kernel) $q^{(\ell)}_t(\ovl x, \ovl y)$ that is bounded above (and below) by Gaussian kernel (cf. \cite[Theorem 1]{IKO}), i.e.
\[
q^{(\ell)}_t(\ovl x, \ovl y)\le CQ_t(\ovl x-\ovl y), \quad \forall t>0,\;  \ovl x, \ovl y\in\R^{\ell d},
\]
where $Q_t(\cdot)$ is the density of a centered Gaussian vector $\ovl Y^0_t$ in $\R^{\ell d}$ with covariance matrix $CtI$, for some $C=C\left(d, \ell, \{\sigma_k\}_{k\in K}, \lambda\right)$ finite. Further, the initial condition $\ovl X_0^f$  has a density $\bp_0(\ovl x)$ bounded above by $\Gamma^\ell$, and compactly supported in the ball $\B(0,\sqrt{\ell}R)$, due to Condition \ref{cond:init}.
Then, for any $t\in[0,T]$, the probability density of $\ovl X_t^f$
\begin{equation*}
\begin{aligned}
&\bp_t(\ovl x)=\left(q_t^{(\ell)}* \bp_0\right)(\ovl x)\le \int Q_t^{(\ell)}(\ovl x-\ovl y) \bp_0(\ovl y)d\ovl y\\
&=\E\left[\bp_0\left(\ovl x+\ovl Y_t^0\right)\right]\le \Gamma^\ell\P\left(\ovl x+\ovl Y_t^0\in\B\left(0, \sqrt{\ell}R\right)\right)\\
&\le \Gamma^\ell\P\left(|\ovl Y_t^0|\ge|\ovl x|-\sqrt{\ell}R\right)\le c^{-1}e^{-c\left(|x|-\sqrt{\ell}R\right)_+},
\end{aligned}
\end{equation*}
where we used the Gaussian tail of $\ovl Y^0_t$ in the last line, and
$c=c(d,\lambda,\ell, T, R, \Gamma, \{\sigma_k\}_{k\in K})$ is a positive constant. Adjusting the value of $c$ yields our thesis.
\end{proof}

\begin{lemma}\label{double-sum}
For any finite $T, d\ge 1$, we have that
\begin{align*}
\lim_{|z|\to 0}\limsup_{N\to\infty}\E\int_0^Tdt\frac{1}{N^2}\sum_{i\neq j\in\cN(t)}\left|v^{\ep,z}(t,x_i^N(t),x_j^N(t))\right|=0.
\end{align*}
\end{lemma}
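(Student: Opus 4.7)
The plan is to dominate the interacting particle system by the coupled auxiliary free system \eqref{SDE-free}, apply Proposition \ref{bdd-density} to pass to a deterministic weighted-$L^1$ bound on $v^{\ep,z}$, and then extract the iterated limit via a two-step dominated-convergence argument using the pointwise envelopes of Proposition \ref{bounds}.

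By the coupling \eqref{lifespan} and non-negativity of $|v^{\ep,z}|$,
\[
\sum_{i\neq j\in\cN(t)}|v^{\ep,z}(t,x_i^N(t),x_j^N(t))|\;\le\;\sum_{1\le i\neq j\le N}|v^{\ep,z}(t,x_i^f(t),x_j^f(t))|.
\]
Taking expectation and invoking Proposition \ref{bdd-density} with $\ell=2$ to bound the joint density of $(x_i^f(t),x_j^f(t))$ by $c^{-1}e^{-c|(x,y)|}$ uniformly for $t\in[0,T]$, together with $N(N-1)/N^{2}\le 1$, Fubini yields
\[
\E\int_{0}^{T}\frac{1}{N^{2}}\sum_{i\neq j\in\cN(t)}|v^{\ep,z}|\,dt\;\le\; c^{-1}\int_{0}^{T}\!dt\iint_{\R^{2d}}|v^{\ep,z}(t,x,y)|\,e^{-c|(x,y)|}\,dxdy.
\]
The claim thus reduces to showing the right-hand side vanishes as $N\to\infty$ followed by $|z|\to 0$.

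A key observation is that Proposition \ref{bounds} supplies an $\ep$-independent pointwise envelope. Define (for $d\ge 3$; the modifications for $d=1,2$ are analogous, using $|\log|y-x-z||$ or a constant in place of $|y-x-z|^{2-d}$)
\[
\Phi^{z}(x,y):=C|y-x-z|^{2-d}\,\mathbf{1}_{\{|y-x-z|<4\}}+Ce^{-C|y-x-z|^{2}}\,\mathbf{1}_{\{|y-x-z|\ge 4\}}.
\]
Since $2-d<0$, the inequality $(|y-x-z|\vee\ep)^{2-d}\le|y-x-z|^{2-d}$ holds, so $r^{\ep,z}(t,x,y)\le\Phi^{z}(x,y)$ for all $\ep>0$ and all $(x,y)$ with $y\neq x+z$. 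Moreover $\Phi^{z}(x,y)=\Phi^{0}(x,y-z)$ is a pure translate of $\Phi^{0}$. A change of variable $(u,w)=(x,y-x)$ together with local integrability of $|w-z|^{2-d}$ near its singularity and the exponential decay of the weight in $u$ gives $\iint\Phi^{z}\,e^{-c|(x,y)|}dxdy\le C$ uniformly for $|z|\le 1$. Consequently $|v^{\ep,z}|\le\Phi^{z}+\Phi^{0}$ is dominated pointwise by an $L^{1}$ function, uniformly in $\ep$.

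For the iterated limit, fix $z\neq 0$. Standard parabolic approximation (mollifier convolution against the locally integrable Green's function of \eqref{aux-pde-simp}) gives $r^{\ep,z}(t,x,y)\to R^{z}(t,x,y)$ pointwise a.e.\ as $\ep\to 0$, where $R^{z}$ is the solution with Dirac-type source $-\delta_{\{y=x-z\}}$. Ordinary dominated convergence with dominator $\Phi^{z}+\Phi^{0}\in L^{1}(e^{-c|(x,y)|}dxdy)$ then yields
\[
\iint|v^{\ep,z}(t,x,y)|\,e^{-c|(x,y)|}dxdy\;\xrightarrow[\ep\to 0]{}\;\iint|V^{z}(t,x,y)|\,e^{-c|(x,y)|}dxdy,\quad V^{z}:=R^{z}-R^{0},
\]
uniformly in $t\in[0,T]$. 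Now as $|z|\to 0$, $V^{z}\to 0$ a.e.\ (the moving singularity of $R^{z}$ at $\{y=x-z\}$ coalesces into that of $R^{0}$ at $\{y=x\}$, and off this null set $R^{z}$ depends continuously on $z$), while the varying dominator $\Phi^{z}+\Phi^{0}$ converges in $L^{1}(e^{-c|(x,y)|}dxdy)$ to $2\Phi^{0}$ by continuity of translations applied to the translate structure $\Phi^{z}(x,y)=\Phi^{0}(x,y-z)$. The generalized dominated convergence theorem, which permits a $z$-dependent dominator whose $L^{1}$ norms converge, gives $\iint|V^{z}|e^{-c|(x,y)|}dxdy\to 0$. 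Bounded convergence in $t\in[0,T]$ finishes the argument.

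The main obstacle is precisely the outer limit: a naive $z$-independent dominator $\sup_{|z|\le 1}\Phi^{z}$ fails to be locally integrable for $d\ge 2$, since the moving singularity of $\Phi^{z}$ can land arbitrarily close to the diagonal $\{y=x\}$. One must therefore keep the dominator $z$-dependent and appeal to the generalized DCT, with the separate $L^1$ convergence of the dominator supplied by the translation structure.
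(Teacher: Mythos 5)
Your proposal is correct and follows the same skeleton as the paper's proof (couple to the free system via \eqref{lifespan}, invoke Proposition \ref{bdd-density} with $\ell=2$ to reduce to a deterministic integral against the weight $e^{-c|(x,y)|}$, identify the $\ep\to0$ limit through the Green-function representation of $r^{\ep,z}$, then send $|z|\to0$), but it resolves the final limit interchange by a different device. The paper pushes $\limsup_{\ep\to0}$ inside with Fatou and then handles $|z|\to0$ by Vitali's theorem, verifying uniform integrability in $|z|\le1$ through an $L^{\delta}$ moment bound on $A_{x,y,z}$ with $\delta\le d/(d-2)$ (resp.\ $\delta<d/(d-1)$ in the gradient analogue). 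You instead observe that the refined estimate \eqref{unif-bd} yields an $\ep$-independent envelope $\Phi^{z}$ (since $(|y-x-z|\vee\ep)^{2-d}\le|y-x-z|^{2-d}$), apply ordinary dominated convergence in $\ep$ for fixed $z$, and then use the generalized dominated convergence theorem in $z$ with the translate structure $\Phi^{z}(x,y)=\Phi^{0}(x,y-z)$ supplying $L^{1}$-convergence of the dominators. Your closing remark — that a $z$-uniform dominator $\sup_{|z|\le1}\Phi^{z}$ is not locally integrable for $d\ge2$ — correctly identifies the obstruction that forces the paper into Vitali; generalized DCT is a clean substitute that avoids computing any $\delta$-moments, at the price of needing the sharper, non-crude form of Proposition \ref{bounds}. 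Two cosmetic points: the limiting source concentrates on $\{y=x+z\}$, not $\{y=x-z\}$; and the asserted uniformity in $t$ of the $\ep\to0$ convergence is neither justified nor needed — it suffices to apply dominated convergence jointly in $(t,x,y)$ against the finite measure $dt\otimes e^{-c|(x,y)|}dxdy$.
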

\begin{proof}
Due to \eqref{lifespan}, it is enough to prove the thesis with the auxiliary system replacing the true system, with the former having infinite lifespan, i.e. to prove that 
\begin{align*}
\lim_{|z|\to 0}\limsup_{N\to\infty}\E\int_0^Tdt\frac{1}{N^2}\sum_{i\neq j=1}^N\left|v^{\ep,z}(t,x_i^f(t),x_j^f(t))\right|=0.
\end{align*}
(The same applies to all the subsequent lemmas in this section.)
Consider any fixed pair $(i,j)$, with $1\le i\neq j\le N$, and we go on to bound
\begin{align*}
\E\int_0^T\left|v^{\ep,z}(t,x_i^f(t),x_j^f(t))\right|dt.
\end{align*}
By Proposition \ref{bdd-density}, $(x_i^f(t),x_j^f(t))$ has a joint density $\bp_t(x,y)$ in $\R^{2d}$ satisfying the bound \eqref{joint-HK} with $\ell=2$, implying that 
\begin{align}  \label{dct}
\E\int_0^T\left|v^{\ep,z}(t,x_i^f(t),x_j^f(t))\right|dt&=\iint_{\R^{2d}}\int_0^T \left|v^{\ep,z}(t,x,y)\right|\bp_t(x,y)dtdxdy   \nonumber \\
&\le C\iint_{\R^{2d}}\sup_{t\in[0,T]} \left|v^{\ep,z}(t,x,y)\right|e^{-\sqrt{|x|^2+|y|^2}/C} dxdy\,,     
\end{align}
and thus the problem is reduced to prove that 
\[
\lim_{|z|\to 0}\limsup_{\ep\to0}\iint_{\R^{2d}}\sup_{t\in[0,T]} \left|v^{\ep,z}(t,x,y)\right|e^{-\sqrt{|x|^2+|y|^2}/C} dxdy=0.
\]
By Fatou's lemma, we have that 
\[
\limsup_{\ep\to0}\iint_{\R^{2d}}\sup_{t\in[0,T]} \left|v^{\ep,z}(t,x,y)\right|e^{-\sqrt{|x|^2+|y|^2}/C} dxdy\le \iint_{\R^{2d}}\limsup_{\ep\to0}\sup_{t\in[0,T]} \left|v^{\ep,z}(t,x,y)\right|e^{-\sqrt{|x|^2+|y|^2}/C} dxdy
\]
and thus it is sufficient to prove that 
\[
\lim_{|z|\to 0}\iint_{\R^{2d}}\limsup_{\ep\to0}\sup_{t\in[0,T]} \left|v^{\ep,z}(t,x,y)\right|e^{-\sqrt{|x|^2+|y|^2}/C} dxdy=0.
\]
By a change of variables, for any $t\in[0,T]$,
\begin{equation}
\begin{aligned}
v^{\ep,z}(T-t,x,y)&=r^{\ep,z}(T-t,x,y)-r^{\ep,0}(T-t,x,y)\\
&=\int_{[0,t]\times\R^{2d}}q_{t-s}(x,y; x',y')\left[\theta^\ep(x'-y'+z)-\theta^\ep(x'-y')\right]dsdx'dy'\\
&=\int_{[0,t]\times\R^{2d}}\left[q_{t-s}(x,y;x',y'+z)-q_{t-s}(x,y;x',y')\right]\theta^\ep(x'-y')dsdx'dy'
\end{aligned}
\end{equation}
where $q_t(x,y;x',y')$ is the $(2d)$-dimensional heat kernel (i.e. fundamental solution) associated with operator $\cA_\bx$ introduced in the proof of Proposition \ref{bdd-density}. Hence, as $\ep\to0$, 
\begin{align*}
v^{\ep,z}(T-t,x,y)\to \int_{[0,t]\times\R^{d}}\left[q_{t-s}(x,y;x',x'+z)-q_{t-s}(x,y;x',x')\right]dsdx'.
\end{align*}
Thus, the problem is reduced to prove that 
\begin{align}\label{dct-2}
\lim_{|z|\to0}\iint_{\R^{2d}}\sup_{t\in[0,T]}\left|\int_{[0,t]\times\R^{d}}\left[q_{t-s}(x,y;x',x'+z)-q_{t-s}(x,y;x',x')\right]dsdx'\right|e^{-\sqrt{|x|^2+|y|^2}/C} dxdy=0.
\end{align}
Since the integrand converges to zero pointwise (see below), to apply Vitali convergence theorem, it suffices to check the uniform integrability in the parameter $|z|\le 1$, with respect to the finite measure $e^{-\sqrt{|x|^2+|y|^2}/C}dxdy$, of
\[
A_{x,y,z}:=\sup_{t\in[0,T]}\int_{\R^{d}}\int_0^tq_{t-s}(x,y;x',x'+z)dsdx'\,;
\]
namely, to prove that for some $\delta>1$, we have that 
\[
\sup_{|z|\le 1}\iint_{\R^{2d}}|A_{x,y,z}|^\delta e^{-\sqrt{|x|^2+|y|^2}/C}dxdy<\infty.
\]
To this end, in the proof of Proposition \ref{bounds}, we already mentioned that for $d\ge2$,
\begin{align}\label{q-dom}
\int_0^tq_{t-s}(x,y;x',x'+z)ds\le C\left(|x-x'|^2+|y-z-x'|^2\right)^{\frac{2-2d}{2}}e^{-\frac{|x-x'|^2+|y-z-x'|^2}{C}}+C1_{\{|x-x'|^2+|y-z-x'|^2\le 1\}}
\end{align}
(and the $d=1$ case is similar, involving the logarithm).
Let us integrate this expression in the variable $x'$ over $\R^d$. Note that, for any $x'\in\R^d$, it holds that
\[
|x-x'|\vee |y-z-x'|\ge \frac{|x-(y-z)|}{2}.
\]
Thus, for any $x'$,
\[
|x-x'|^2+|y-z-x'|^2\ge \frac{|x-y+z|^2}{4}.
\]
We separate the integral in $x'$ into one in the ball $x'\in \B\left(\frac{x+(y-z)}{2},|x-y+z|\right)$ and the other outside the ball, and then it is easy to show that the first part is upper bounded by $C|x-y+z|^{2-d}$, and the second part by $C|x-y+z|^{2-d}1_{\{d\ge 3\}}+C|\log|x-y+z||1_{\{d=2\}}$. In the case $d=1$, a constant bound can be shown to hold.

Now we need to check, for some $\delta>1$, the uniform finiteness in $|z|\le 1$ of
\begin{align*}
\iint_{\R^{2d}}|A_{x,y,z}|^\delta e^{-\sqrt{|x|^2+|y|^2}/C}dxdy\le
\begin{cases}
C\iint_{\R^{2d}}|x-y+z|^{(2-d)\delta}e^{-\sqrt{|x|^2+|y|^2}/C}dxdy, \quad d\ge 3\\
C\iint_{\R^{2d}}|\log|x-y+z||^\delta e^{-\sqrt{|x|^2+|y|^2}/C}dxdy, \quad d=2\\
C\iint_{\R^{2d}}e^{-\sqrt{|x|^2+|y|^2}/C}dxdy, \quad d=1.
\end{cases}
\end{align*}
Taking $d\ge3$ for instance (the other cases being similar), 
\begin{align*}
&\iint_{\R^{2d}}|x-y+z|^{(2-d)\delta}e^{-\sqrt{|x|^2+|y|^2}/C}dxdy
\overset{u=\frac{x-y}{\sqrt{2}},\; v=\frac{x-y}{\sqrt{2}}}{=}
\iint_{\R^{2d}}|\sqrt{2}u+z|^{(2-d)\delta}e^{-\sqrt{|u|^2+|v|^2}/C}dudv\\
&\le \iint|\sqrt{2}u+z|^{(2-d)\delta}e^{-(|u|+|v|)/(2C)}dudv\le C\int|\sqrt{2}u+z|^{(2-d)\delta}e^{-|u|/(2C)}du\\
&\overset{|z|\le 1}{\le } C\int_{|u|\le 2}|\sqrt{2}u+z|^{(2-d)\delta}du+C\int_{|u|>2}e^{-|u|/(2C)}du \\
&\overset{|z|\le 1}{\le } C\int_{|w|\le 4}|w|^{(2-d)\delta}dw+C\le C',
\end{align*}
provided we take $\delta\le d/(d-2)$, where the constant $C'$ is independent of $|z|\le 1$. 

Note also that $z\mapsto A_{x,y,z}$ is continuous at $z=0$ by dominated convergence, since LHS of \eqref{q-dom} is continuous in $z$, a classical fact for Green functions, and RHS of \eqref{q-dom} is dominated by $C|x-y|^{2-2d}e^{-|x-x'|^2/C}+C1_{\{|x-x'|\le 1\}}$, for all $|z|\le |x-y|/4$ say (and $x\neq y$), which is integrable in $x'\in\R^d$.

With the integrand of \eqref{dct-2} converging pointwise to $0$ as $|z|\to0$ (i.e. for a.e. fixed $(x,y)$), and uniformly integrable with respect to the finite measure $e^{-\sqrt{|x|^2+|y|^2}/C}dxdy$, we have by Vitali convergence theorem that 
\[
\lim_{|z|\to0}\limsup_{\ep\to0}\iint_{\R^{2d}}\sup_{t\in[0,T]} \left|v^{\ep,z}(T-t,x,y)\right|e^{-\sqrt{|x|^2+|y|^2}/C} dxdy =0.
\]
Since the cardinality in the double sum in the thesis is (at most) $N^2$, this completes the proof. 
\end{proof}

\begin{lemma}\label{double-sum-grad}
For any finite $T, d\ge 1$, we have that
\begin{align*}
\lim_{|z|\to 0}\limsup_{N\to\infty}\E\int_0^Tdt\frac{1}{N^2}\sum_{i\neq j\in\cN(t)}\left|\nabla_xv^{\ep,z}(t,x_i^N(t),x_j^N(t))\right|=0.
\end{align*}
The same statement also holds for $\nabla_y$ instead of $\nabla_x$.
\end{lemma}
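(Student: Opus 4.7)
The proof runs in strict parallel with that of Lemma \ref{double-sum}, with only one substantive change: the pointwise bound \eqref{unif-bd} for $v^{\ep,z}$ is replaced by the gradient bound \eqref{unif-bd-grad} on $\nabla_x r^{\ep,z}$ from Proposition \ref{bounds}. First, by \eqref{lifespan} we replace the interacting particles $x_i^N(t)$ with the free system $x_i^f(t)$ of infinite lifespan. Second, Proposition \ref{bdd-density} applied with $\ell=2$ bounds the joint density of $(x_i^f(t),x_j^f(t))$ by $Ce^{-\sqrt{|x|^2+|y|^2}/C}$, reducing the claim to showing
$$\lim_{|z|\to0}\iint_{\R^{2d}}\Bigl(\limsup_{\ep\to 0}\sup_{t\in[0,T]}\bigl|\nabla_x v^{\ep,z}(t,x,y)\bigr|\Bigr)\,e^{-\sqrt{|x|^2+|y|^2}/C}\,dx\,dy=0,$$
via Fatou in $\ep$ and Vitali in $|z|$, exactly as in Lemma \ref{double-sum}.

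Third, differentiating the heat-kernel representation of $v^{\ep,z}$ in $x$ gives
\begin{align*}
\nabla_x v^{\ep,z}(T-t,x,y)=\int_0^t\!\!\iint_{\R^{2d}}\bigl[\nabla_x q_{t-s}(x,y;x',y'+z)-\nabla_x q_{t-s}(x,y;x',y')\bigr]\theta^\ep(x'-y')\,dx'\,dy'\,ds.
\end{align*}
As $\ep\to 0$ this converges (pointwise for $x\ne y$) to the analogous expression with $y'$ forced to equal $x'$, and as $|z|\to 0$ that limit in turn vanishes by continuity of the heat-kernel gradient in its last two arguments.

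The main work, and the only delicate point, is to construct a $|z|$-uniform envelope that is integrable against $e^{-\sqrt{|x|^2+|y|^2}/C}\,dx\,dy$, so that Vitali applies. Using the Gaussian gradient estimate $|\nabla_x q_t(x,y;x',y')|\le Ct^{-(1+2d)/2}e^{-(|x-x'|^2+|y-y'|^2)/(Ct)}$ together with the geometric dichotomy $|x-x'|\vee|y-z-x'|\ge |x-y+z|/2$ (exactly as deployed for Lemma \ref{double-sum}), one obtains for $d\ge 2$
$$\sup_{t\in[0,T]}\bigl|\nabla_x v^{\ep,z}(t,x,y)\bigr|\;\le\; C\,|x-y+z|^{1-d}\,1_{\{|x-y+z|<4\}}+Ce^{-|x-y+z|^2/C},$$
with the logarithmic (locally integrable) replacement in $d=1$. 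Here the gradient has cost us one power of the distance compared with the bound on $v^{\ep,z}$ itself, and the main obstacle is to retain strict integrability above exponent $1$. Since $d/(d-1)>1$ for every $d\ge 2$, one picks $\delta\in\bigl(1,d/(d-1)\bigr)$ and checks, via the change of variables $u=(x-y)/\sqrt 2$, $v=(x+y)/\sqrt 2$, that
$$\sup_{|z|\le 1}\iint_{\R^{2d}}|x-y+z|^{(1-d)\delta}\,e^{-\sqrt{|x|^2+|y|^2}/C}\,dx\,dy\;\le\; C\int_{|w|\le 4}|w|^{(1-d)\delta}\,dw+C'<\infty,$$
the radial integral being finite precisely because $(1-d)\delta+d>0$. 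Vitali's theorem then closes the argument, and the same reasoning with $x\leftrightarrow y$ handles $\nabla_y$.
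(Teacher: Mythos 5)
Your proposal is correct and follows essentially the same route as the paper's proof: switch to the free system, bound the joint density via Proposition \ref{bdd-density} with $\ell=2$, pass to the limit $\ep\to0$ by Fatou, and verify uniform integrability in $|z|\le1$ of the envelope $C|x-y+z|^{(1-d)}$ (logarithmic for $d=1$) against the exponential weight with an exponent $\delta\in(1,d/(d-1))$, closing with Vitali. The only cosmetic difference is that the paper presents the envelope as a bound on $\int_0^t|\nabla_x q_{t-s}|\,ds$ integrated in $x'$ rather than directly on $\sup_t|\nabla_x v^{\ep,z}|$, but the content is identical.
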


\begin{proof}
The proof is analogous to Lemma \ref{double-sum}, using the estimate for $|\nabla_x r^{\ep,z}(t,x,y)|$  \eqref{unif-grad-simp} instead of \eqref{unif-bd-simp}. First we switch to the free system, and it suffices to prove that 
\[
\lim_{|z|\to 0}\limsup_{N\to\infty}\E\int_0^Tdt\frac{1}{N^2}\sum_{i\neq j=1}^N\left|\nabla_xv^{\ep,z}(t,x_i^f(t),x_j^f(t))\right|=0.
\]
Considering any fixed pair of particles $(i,j)$, $1\le i\neq j\le N$, we have that
\begin{align}\label{dct-grad}
\E\int_0^T\left|\nabla_x v^{\ep,z}\left(t,x_i^f(t),x_j^f(t)\right)\right|dt
&=\iint_{\R^{2d}}\int_0^T \left|\nabla_xv^{\ep,z}(t,x,y)\right|\bp_t(x,y) dtdxdy\nonumber\\
&\le \iint_{\R^{2d}}\sup_{t\in[0,T]} \left|\nabla_xv^{\ep,z}(t,x,y)\right|e^{-\sqrt{|x|^2+|y|^2}/C}dxdy
\end{align}
by \eqref{joint-HK}.
Further, by a change of variables, for any $t\in[0,T]$,
\begin{align*}
\nabla_xv^{\ep,z}(T-t,x,y)&=\nabla_xr^{\ep,z}(T-t,x,y)-\nabla_xr^{\ep,0}(T-t,x,y)\\
&=\int_{[0,t]\times\R^{2d}}\nabla_xq_{t-s}(x,y; x',y')\left[\theta^\ep(x'-y'+z)-\theta^\ep(x'-y')\right]dsdx'dy'\\
&=\int_{[0,t]\times\R^{2d}}\left[\nabla_xq_{t-s}(x,y;x',y'+z)-\nabla_xq_{t-s}(x,y;x',y')\right]\theta^\ep(x'-y')dsdx'dy'.
\end{align*}
Hence, as $\ep\to0$, 
\begin{align*}
\nabla_xv^{\ep,z}(T-t,x,y)\to\int_{[0,t]\times\R^d}\left[\nabla_xq_{t-s}(x,y;x',x'+z)-\nabla_xq_{t-s}(x,y;x',x')\right]e^{-\sqrt{|x|^2+|y|^2}/C}dsdx'\,,
\end{align*}
and by Fatou's lemma it is sufficient to prove that 
\begin{align*}
\lim_{|z|\to0}\iint_{\R^{2d}}\sup_{t\in[0,T]}\left|\int_{[0,t]\times\R^d}\left[\nabla_xq_{t-s}(x,y;x',x'+z)-\nabla_xq_{t-s}(x,y;x',x')\right]dx'ds\right|e^{-\sqrt{|x|^2+|y|^2}/C}dxdy=0.
\end{align*}
To check that the integrand above is uniformly integrable in $|z|\le 1$, with respect to the finite measure $e^{-\sqrt{|x|^2+|y|^2}/C}dxdy$, we note that for $d\ge 1$ (already mentioned in the proof of Proposition \ref{bounds})
\[
\int_0^t|\nabla_xq_{t-s}(x,y;x',x'+z)|ds\le C\left(|x-x'|^2+|y-z-x'|^2\right)^{\frac{1-2d}{2}}e^{-\frac{|x-x'|^2+|y-z-x'|^2}{C}}+C1_{\{|x-x'|^2+|y-z-x'|^2\le 1\}}.
\]
By the same reasoning as in the proof of Lemma \ref{double-sum}, as we integrate the preceding expression in $x'$ over $\R^d$, we get an upper bound $C|x-y+z|^{1-d}1_{\{d\ge 2\}}+C|\log|x-y+z||1_{\{d=1\}}$. Then, for $d\ge 2$ we check for some $\delta>1$, the uniform finiteness in the parameter $|z|\le 1$ of
\[
\iint_{\R^{2d}}|x-y+z|^{(1-d)\delta}e^{-\sqrt{|x|^2+|y|^2}/C}dxdy,
\]
which is true provided we take $\delta<d/(d-1)$. By Vitali convergence theorem, we have that 
\[
\lim_{|z|\to0}\limsup_{\ep\to0}\iint_{\R^{2d}}\sup_{t\in[0,T]} \left|\nabla_xv^{\ep,z}(T-t,x,y)\right|e^{-\sqrt{|x|^2+|y|^2}/C} dxdy =0
\]
which leads to our thesis, as in Lemma \ref{double-sum}.
\end{proof}

\begin{remark}
By Lemmas \ref{double-sum} and \ref{double-sum-grad}, the terms $H_2,H_4, H_5,H_8, H_9$ are negligible in the sense of \eqref{negligible}, once we bound test functions $\phi,\psi$ and their derivatives, as well as the $C_b^\infty$ functions $Q^{\alpha\beta}$ and its partial derivatives above by constants. 
\end{remark}

\begin{lemma}\label{triple-sum}
For any finite $T, d\ge 1$, we have that
\begin{align*}
\lim_{|z|\to 0}\limsup_{N\to\infty}\E\int_0^Tdt\frac{1}{N^3}\sum_{i\neq  k\in\cN(t)}\theta^\ep(x_i^N(t)-x_k^N(t))\sum_{j\in\cN(t),j\neq i,k}\left|v^{\ep,z}(t,x_i^N(t),x_j^N(t))\right|=0.
\end{align*}
\end{lemma}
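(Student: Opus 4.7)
The plan is to reduce this triple-sum estimate to Lemma \ref{double-sum} by integrating out the extra particle index $k$, whose only appearance is in the factor $\theta^\ep(x_i^N(t)-x_k^N(t))$. Since $v^{\ep,z}(t,x_i,x_j)$ does not involve $x_k$, and $\theta^\ep$ is a probability density on $\R^d$, summing/integrating over the third particle should cost at most a harmless constant. As in the previous lemmas, I would first invoke \eqref{lifespan} to replace the active-index sum by an unconditional sum over triples of distinct indices $(i,j,k)\in\{1,\ldots,N\}^3$ of the corresponding expression for the free system $\{x_i^f(t)\}$; since there are at most $N^3$ such triples, it then suffices to bound each term uniformly by a quantity that vanishes in the iterated limit.

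Fix such a triple. By Proposition \ref{bdd-density} applied with $\ell=3$, the joint density $\bp_t(x,y,w)$ of $(x_i^f(t),x_j^f(t),x_k^f(t))$ satisfies $\bp_t(x,y,w)\le c^{-1}e^{-c\sqrt{|x|^2+|y|^2+|w|^2}}$ uniformly in $t\in[0,T]$, so
$$
\E\int_0^T\theta^\ep\bigl(x_i^f(t)-x_k^f(t)\bigr)\bigl|v^{\ep,z}(t,x_i^f(t),x_j^f(t))\bigr|dt\le C\int_0^T\!\!dt\iiint_{\R^{3d}}\theta^\ep(x-w)|v^{\ep,z}(t,x,y)|\,e^{-c\sqrt{|x|^2+|y|^2+|w|^2}}dxdydw.
$$
Using $\sqrt{|x|^2+|y|^2+|w|^2}\ge\tfrac{1}{\sqrt{3}}(|x|+|y|+|w|)$ and the change of variables $w=x-\ep u$, the $w$-integral can be bounded by
$$
\int_{\R^d}\theta^\ep(x-w)e^{-c'(|x|+|w|)}dw = e^{-c'|x|}\int_{\R^d}\theta(u)e^{-c'|x-\ep u|}du\le C'e^{-c'|x|}
$$
uniformly for $\ep\in(0,1)$, since $\theta$ is supported in $\B(0,C_0)$ and has unit integral. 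This yields the per-triple bound
$$
C''\iint_{\R^{2d}}\sup_{t\in[0,T]}|v^{\ep,z}(t,x,y)|\,e^{-c'(|x|+|y|)}\,dxdy\cdot T,
$$
which is precisely of the form \eqref{dct} from the proof of Lemma \ref{double-sum} (up to adjusting the constant in the exponential weight). By that proof, this quantity tends to zero as $\ep\to0$ and then $|z|\to0$, via the Fatou--Vitali argument and the pointwise convergence of $v^{\ep,z}$.

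Finally, summing the per-triple bound over at most $N^3$ distinct triples and dividing by $N^3$ preserves the same upper bound, giving the iterated limit in the statement. The main (though not deep) obstacle is the second step: verifying that the concentrating mollifier $\theta^\ep(x-w)$ integrates cleanly against the Gaussian tail from Proposition \ref{bdd-density} with constants independent of $\ep$; this is what allows the whole triple sum to collapse onto the double-sum estimate already established in Lemma \ref{double-sum}, so no new analytic input beyond Proposition \ref{bdd-density} and Lemma \ref{double-sum} is required.
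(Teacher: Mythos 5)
Your proposal is correct and follows essentially the same route as the paper: switch to the free system, apply Proposition \ref{bdd-density} with $\ell=3$, integrate out the third particle using that $\theta^\ep$ has unit mass against the exponential weight, and reduce to the integral already controlled in Lemma \ref{double-sum}, with the $N^3$ cardinality absorbed by the $N^{-3}$ prefactor. The paper simply drops the $x_3$-dependence of the exponential weight before using $\int\theta^\ep(x_1-x_3)\,dx_3=1$, which is a marginally shorter version of your change-of-variables step, but the argument is the same.
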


\begin{proof}
First switch to the free system. Consider any triple of particles $(i,j,k)$ with indices all distinct. To bound
\begin{align*}
\E\int_0^T\theta^\ep(x_i^f(t)-x_k^f(t))\left|v^{\ep,z}(t,x_i^f(t),x_j^f(t))\right|dt
\end{align*}
by Proposition \ref{bdd-density}, $(x_i^f(t),x_j^f(t),x_k^f(t))$ has a joint density $\bp_t(x_1,x_2,x_3)$ in $\R^{3d}$ satisfying the bound \eqref{joint-HK} with $\ell=3$. Thus, we can write
\begin{align*}
&\E\int_0^T\theta^\ep(x_i^f(t)-x_k^f(t))\left|v^{\ep,z}(t,x_i^f(t),x_j^f(t))\right|dt\\
&=\iiint_{\R^{3d}}\int_0^T\theta^\ep(x_1-x_3)\left|v^{\ep,z}(t,x_1,x_2)\right|\bp_t(x_1,x_2,x_3)dtdxdydz\\
&\le C\iiint_{\R^{3d}}\int_0^T\theta^\ep(x_1-x_3)\left|v^{\ep,z}(t,x_1,x_2)\right|e^{-\sqrt{\sum_{\ell=1}^3|x_\ell|^2}/C}dtdx_1dx_2dx_3
\end{align*}
Integrating in $x_3$ first and using $\int\theta^\ep(x_1-x_3)dx_3=1$, we bound the above integral above by
\begin{align*}
C\iint_{\R^{2d}}\int_0^T \left|v^{\ep,z}(t,x_1,x_2)\right|e^{-\sqrt{|x_1|^2+|x_2|^2}/C}dtdx_1dx_2
\end{align*}
This is the same integral apprearing in the proof of Lemma \ref{double-sum}, which has been shown to tend to zero as $\ep\to0$ followed by $|z|\to0$. Since the cardinality in the triple sum of the thesis is (at most) $N^3$, this completes the proof.
\end{proof}

\begin{remark}
By Lemma \ref{triple-sum}, the term $H_6$ \eqref{H6} is negligible in the sense of \eqref{negligible} (by considering it term by term), after bounding the test functions by constants.
\end{remark}

\begin{lemma}\label{lem:H6}
For any finite $T, d\ge 1$, we have that
\begin{align*}
\limsup_{N\to\infty}\E\int_0^Tdt\frac{1}{N^3}\sum_{i\neq  j\in\cN(t)}\theta^\ep(x_i^N(t)-x_j^N(t))\left|v^{\ep,z}(t,x_i^N(t),x_j^N(t))\right|=0.
\end{align*}
\end{lemma}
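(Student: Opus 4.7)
The plan is straightforward given the tools already developed. The key observation is that although the summand contains both the singular factor $\theta^\ep$ and the singular factor $v^{\ep,z}$, the sum is over only \emph{two} indices (divided by $N^3$), so we have an extra factor of $1/N$ to spare compared with Lemma~\ref{triple-sum}; also, when the inner integration against $\theta^\ep$ is carried out, it gives $O(1)$ rather than $O(\ep^{-d})$, so the remaining cost is just the uniform sup bound of $|v^{\ep,z}|$ from the crude estimate \eqref{unif-bd-simp}.

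First, using \eqref{lifespan}, I would replace the true system by the free one, so the task reduces to bounding
\[
\limsup_{N\to\infty}\frac{1}{N^3}\sum_{i\neq j=1}^N \E\int_0^T \theta^\ep\big(x_i^f(t)-x_j^f(t)\big)\,\big|v^{\ep,z}\big(t,x_i^f(t),x_j^f(t)\big)\big|\,dt.
\]
For each fixed pair $(i,j)$, Proposition~\ref{bdd-density} with $\ell=2$ provides a joint density $\bp_t(x,y)$ satisfying the exponential bound \eqref{joint-HK}. Pulling out the crude uniform bound from \eqref{unif-bd-simp} (e.g.\ $|v^{\ep,z}|\le C\ep^{2-d}$ for $d\ge 3$, $C|\log\ep|$ for $d=2$, $C$ for $d=1$), one gets
\[
\E\int_0^T \theta^\ep(x_i^f-x_j^f)\,|v^{\ep,z}(t,x_i^f,x_j^f)|\,dt \le C\,\omega(\ep)\,T\iint_{\R^{2d}}\theta^\ep(x-y)\,e^{-\sqrt{|x|^2+|y|^2}/C}\,dx\,dy,
\]
where $\omega(\ep)$ is $\ep^{2-d}$, $|\log\ep|$, or $1$ in the three dimensional regimes. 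Since $\theta^\ep$ is supported in $|x-y|\le C_0\ep$, bounding $|y|\ge |x|-C_0\ep$ lets the Gaussian weight factor out of the $y$-integral, and $\int\theta^\ep(x-y)\,dy=1$ finishes the spatial integral to give a quantity $\le C$ uniformly in $\ep\in(0,1)$.

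Summing over the $N^2$ pairs and dividing by $N^3$, the lemma reduces to $\omega(\ep)/N\to 0$. By \eqref{local} one has $\ep^{1-d}/N\le C$ for $d\ge 2$, so
\[
\frac{\ep^{2-d}}{N}\le C\,\ep\ (d\ge 3), \qquad \frac{|\log\ep|}{N}\le C\,\ep\,|\log\ep|\ (d=2),
\]
both of which vanish; for $d=1$ one simply has $\omega(\ep)/N = C/N\to 0$. This concludes the proof.

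There is no real obstacle here: the only place where care is needed is to notice that one need not track the joint density's sharp form beyond the exponential decay (which suffices to make the $(x,y)$-integral finite), and that the singularity of $v^{\ep,z}$ is absorbed by the crude sup bound, with the localization of $\theta^\ep$ providing the $O(1)$ spatial factor rather than the naive $O(\ep^{-d})$. The scaling \eqref{local} is then tight enough for each dimension to close the estimate.
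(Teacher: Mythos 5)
Your proposal is correct and follows essentially the same route as the paper's proof: switch to the free system via \eqref{lifespan}, absorb $|v^{\ep,z}|$ by the crude sup bound \eqref{unif-bd-simp}, use Proposition \ref{bdd-density} with $\ell=2$ together with $\int\theta^\ep(x-y)\,dy=1$ to make the remaining expectation $O(1)$ per pair, and close with the scaling \eqref{local} (the paper packages the $N^{-1}\cdot\omega(\ep)$ factor as $\kappa(\ep)$ in \eqref{error}). No gaps.
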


\begin{proof}
First switch to the free system. Consider any pair of particles $(i,j)$ with $1\le i\neq j\le N$. By \eqref{unif-bd-simp}, we have that 
\begin{align*}
&\E\frac{1}{N}\int_0^T\theta^\ep(x_i^f(t)-x_j^f(t))\left|v^{\ep,z}(t,x_i^f(t),x_j^f(t))\right|dt\le \kappa(\ep)\; \E\int_0^T\theta^\ep(x_i^f(t)-x_j^f(t))dt,
\end{align*}
where 
\begin{align}\label{error}
\kappa(\ep):=C
\begin{cases}
\ep^{2-d}N^{-1}, \quad d\ge 3\\
|\log\ep|N^{-1}, \quad d=2\\
N^{-1}, \quad d=1,
\end{cases}
\end{align} 
which vanishes as $N\to\infty$ by \eqref{local}.
By Proposition \ref{bdd-density} with $\ell=2$, we can write
\begin{align*}
\E\int_0^T\theta^\ep(x_i^f(t)-x_j^f(t))dt&=\int_0^T\iint_{\R^{2d}}\theta^\ep(x-y)\bp_t(x,y)dxdydt\\
&\le CT\iint_{\R^{2d}}\theta^\ep(x-y)e^{-\sqrt{|x|^2+|y|^2}/C}dxdy\\
&\le C\int_{\R^d}e^{-|x|/C}dx\le C'
\end{align*}
using $\int\theta^\ep(x-y)dy=1$. Since the double sum has cardinality at most $N^2$, the claim is proved.
\end{proof}

\begin{remark}
By Lemma \ref{lem:H6}, the term $H_7$ \eqref{H7} is negligible in the sense of \eqref{negligible}, after bounding the test functions by constants.
\end{remark}

Now we turn to the terms $B_1,B_2,B_3$ related to the martingales.

\begin{lemma}\label{quadruple}
For any finite $T, d\ge 1$, we have that
\begin{align*}
\lim_{|z|\to 0}\limsup_{N\to\infty}\E\int_0^Tdt\frac{1}{N^4}\left(\sum_{i\neq j\in\cN(t)}\left|v^{\ep,z}(t,x_i^N(t),x_j^N(t))\right|\right)^2=0.
\end{align*}
\end{lemma}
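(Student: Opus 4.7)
The plan is to expand the square into a quadruple sum and split it according to the number of distinct indices among the four. As in the previous lemmas, using the coupling \eqref{lifespan} I may replace $\cN(t)$ by $\{1,\dots,N\}$ and $x_i^N(t)$ by the free process $x_i^f(t)$; so it suffices to bound
\[
\frac{1}{N^4}\sum_{\substack{i\neq j\\ k\neq\ell}} \E\int_0^T|v^{\ep,z}(t,x_i^f(t),x_j^f(t))|\,|v^{\ep,z}(t,x_k^f(t),x_\ell^f(t))|\,dt.
\]
I will partition the index sets into three cases depending on the cardinality of $\{i,j,k,\ell\}$.

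\textbf{Case 1 (four distinct indices, the main case).} There are at most $N^4$ such quadruples. For each one, Proposition \ref{bdd-density} with $\ell=4$ gives a joint density $\mathbf{p}_t(x_1,x_2,x_3,x_4)$ bounded above by $C e^{-c\sqrt{|x_1|^2+|x_2|^2+|x_3|^2+|x_4|^2}}$. Using $\sqrt{\sum_{m=1}^4 |x_m|^2}\geq \tfrac{1}{2}\sum_{m=1}^4 |x_m|$, this exponential factors as $\prod_m e^{-c|x_m|/2}$. After factoring and bounding $\int_0^T f(t)^2 dt\le T(\sup_{t\in[0,T]}f(t))^2$, the contribution of this case is at most
\[
C\,T\left(\iint_{\R^{2d}} \sup_{t\in[0,T]}|v^{\ep,z}(t,x,y)|\,e^{-c(|x|+|y|)/2}\,dxdy\right)^{\!2}\!,
\]
which vanishes as $\ep\to 0$ followed by $|z|\to 0$, by the very estimate proved in Lemma \ref{double-sum}.

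\textbf{Cases 2 and 3 (three or two distinct indices).} Here I use the crude uniform bounds \eqref{unif-bd-simp} on $|v^{\ep,z}|$; denote $V(\ep):=C\ep^{2-d}$ for $d\ge 3$, $C|\log\ep|$ for $d=2$, and $C$ for $d=1$. In the three-distinct case (four subcases: one of $\{k,\ell\}$ coincides with one of $\{i,j\}$), there are $O(N^3)$ quadruples. Bounding one factor by $V(\ep)$ and keeping the other, Proposition \ref{bdd-density} with $\ell=3$ reduces the remaining integral to one of the form in Lemma \ref{double-sum}, which is merely finite (uniformly in $\ep, z$). Dividing by $N^4$ yields an upper bound $C V(\ep)/N$. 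In the two-distinct case, which corresponds to $\{k,\ell\}=\{i,j\}$, there are $O(N^2)$ quadruples and bounding both factors by $V(\ep)$ gives $C V(\ep)^2/N^2$. Both quantities tend to zero under the scaling \eqref{local}: indeed, in $d\ge 2$ one has $V(\ep)/N\le c\,\ep\cdot(\ep^{1-d}/N)\to 0$ (resp.\ $\ep|\log\ep|\cdot(\ep^{-1}/N)\to 0$ for $d=2$), and the two-distinct bound is the square of this.

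The main obstacle is Case 1, which requires the subtle vanishing established in Lemma \ref{double-sum}, together with the factorization of the Gaussian tail so that the four-point integral splits into the product of two two-point integrals. Once that is in place, Cases 2 and 3 are routine consequences of the uniform pointwise bounds \eqref{unif-bd-simp} and the local interaction condition \eqref{local}.
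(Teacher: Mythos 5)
Your proposal is correct and follows essentially the same route as the paper's proof: switch to the free system via \eqref{lifespan}, expand the square by the cardinality of $\{i,j,k,\ell\}$, handle the all-distinct quadruples with Proposition \ref{bdd-density} ($\ell=4$) plus a factorization of the exponential tail that reduces the four-point integral to the square of the two-point integral shown to vanish in the proof of Lemma \ref{double-sum}, and kill the lower-cardinality terms with the crude bounds \eqref{unif-bd-simp} and the scaling \eqref{local}. The only (immaterial) differences are that the paper factorizes the Gaussian weight into two pairs rather than four singletons, and invokes the full vanishing statement of Lemma \ref{double-sum} for the degenerate terms where you use uniform boundedness together with $V(\ep)/N\to 0$.
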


\begin{proof}
First switch to the free system. Expanding the square, we have
\begin{align*}
&\frac{1}{N^4}\left(\sum_{i\neq j=1}^N\left|v^{\ep,z}(t,x_i^f(t),x_j^f(t))\right|\right)^2\\
&= \frac{1}{N^4}\sum_{i\neq j\neq k\neq \ell =1}^N\left|v^{\ep,z}(t,x_i^f(t),x_j^f(t))v^{\ep,z}(t,x_k^f(t),x_\ell^f(t))\right|\\
&+\frac{1}{N^4}\sum_{i\neq j\neq \ell =1}^N\left|v^{\ep,z}(t,x_i^f(t),x_j^f(t))v^{\ep,z}(t,x_i^f(t),x_\ell^f(t))\right|\\
&+\frac{1}{N^4}\sum_{i\neq j\neq k =1}^N\left|v^{\ep,z}(t,x_i^f(t),x_j^f(t))v^{\ep,z}(t,x_k^f(t),x_i^f(t))\right|\\
&+\frac{1}{N^4}\sum_{j\neq i\neq \ell =1}^N\left|v^{\ep,z}(t,x_i^f(t),x_j^f(t))v^{\ep,z}(t,x_j^f(t),x_\ell^f(t))\right|\\
&+\frac{1}{N^4}\sum_{j\neq i\neq k =1}^N\left|v^{\ep,z}(t,x_i^f(t),x_j^f(t))v^{\ep,z}(t,x_k^f(t),x_j^f(t))\right|\\
&+\frac{1}{N^4}\sum_{i\neq j =1}^N\left|v^{\ep,z}(t,x_i^f(t),x_j^f(t))\right|^2\\
&+\frac{1}{N^4}\sum_{i\neq j=1}^N\left|v^{\ep,z}(t,x_i^f(t),x_j^f(t))v^{\ep,z}(t,x_j^f(t),x_i^f(t))\right|
\end{align*}
The last six terms are negligible, since the cardinality in their sums are at most $N^3$ (due to repeated indices), and by the bound \eqref{unif-bd-simp}, each individual quadratic term, e.g.
\begin{align}\label{negli-bd}
\frac{1}{N}\left|v^{\ep,z}(t,x_i^f(t),x_j^f(t))v^{\ep,z}(t,x_i^f(t),x_\ell^f(t))\right|\le \kappa(\ep)
\left|v^{\ep,z}(t,x_i^f(t),x_j^f(t))\right|
\end{align}
where $\kappa(\ep)$ defined in \eqref{error}, and hence the second sum above
\begin{align*}
&\frac{1}{N^4}\sum_{i\neq j\neq \ell =1}^N\left|v^{\ep,z}(t,x_i^f(t),x_j^f(t))v^{\ep,z}(t,x_i^f(t),x_\ell^f(t))\right|\\
&\le \frac{\kappa(\ep)}{N^3}\sum_{i\neq j\neq \ell =1}^N\left|v^{\ep,z}(t,x_i^f(t),x_j^f(t))\right|\le \frac{\kappa(\ep)}{N^2}\sum_{i\neq j =1}^N\left|v^{\ep,z}(t,x_i^f(t),x_j^f(t))\right|.
\end{align*}
Taking expectation and integrating in time, it is negligible by the statement of Lemma \ref{double-sum}. The other sums can be handled similarly.

Now we deal with the first (principle) term, where the cardinality of the sum is $O(N^4)$ and the indices are all distinct, thus it suffices to consider any fixed quadruple of particles $(i,j,k,\ell)$.
\begin{align*}
\E\int_0^T  \left|v^{\ep,z}(t,x_i^f(t),x_j^f(t))v^{\ep,z}(t,x_k^f(t),x_\ell^f(t))\right| dt.
\end{align*}
By Proposition \ref{bdd-density}, $(x_i^f(t),x_j^f(t),x_k^f(t),x_\ell^f(t))$ has a joint density $\bp_t(x_1,x_2,x_3,x_4)$ in $\R^{4d}$ satisfying the bound \eqref{joint-HK} with $\ell=4$. Thus, we can write
\begin{align*}
&\E\int_0^T \left|v^{\ep,z}(t,x_i^f(t),x_j^f(t))v^{\ep,z}(t,x_k^f(t),x_\ell^f(t))\right|  dt\\
&=\int_{\R^{4d}}\int_0^T  \left|v^{\ep,z}(t,x_1,x_2)v^{\ep,z}(t,x_3,x_4)\right|\bp_t(x_1,x_2,x_3,x_4)dtdx_1dx_2dx_3dx_4\\
&\le C \int_{\R^{4d}}\int_0^T  \left|v^{\ep,z}(t,x_1,x_2)v^{\ep,z}(t,x_3,x_4)\right| e^{-\sqrt{\sum_{i=1}^4|x_i|^2}/C}dtdx_1dx_2dx_3dx_4\\
&\le   C \int_0^T\left(\iint_{\R^{2d}} \left|v^{\ep,z}(t,x_1,x_2)\right| e^{-\frac{1}{\sqrt{2}C}\sqrt{|x_1|^2+|x_2|^2}}dx_1dx_2\right)^2dt.
\end{align*} 
As shown in the proof of Lemma \ref{double-sum},
\begin{align*}
\lim_{|z|\to0}\limsup_{\ep\to0}\iint_{\R^{2d}} \sup_{t\in[0,T]}\left|v^{\ep,z}(t,x_1,x_2)\right|e^{-\frac{1}{\sqrt{2}C}\sqrt{|x_1|^2+|x_2|^2}}dx_1dx_2=0,
\end{align*}
and hence, by the preceding inequality, we also have that
\begin{align*}
\lim_{|z|\to0}\limsup_{\ep\to0}\E\int_0^T  \left|v^{\ep,z}(t,x_i^f(t),x_j^f(t))v^{\ep,z}(t,x_k^f(t),x_\ell^f(t))\right| dt=0.
\end{align*}
Since the cardinality in the quadruple sum is (at most) $N^4$, this completes the proof.
\end{proof}

\begin{lemma}\label{quadruple-grad}
For any finite $T, d\ge 1$, we have that
\begin{align*}
\lim_{|z|\to 0}\limsup_{N\to\infty}\E\int_0^Tdt\frac{1}{N^4}\left(\sum_{i\neq j \in\cN(t)}\left|\nabla v^{\ep,z}(t,x_i^N(t),x_j^N(t))\right|\right)^2=0.
\end{align*}
\end{lemma}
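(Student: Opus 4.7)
\textbf{Proof proposal for Lemma \ref{quadruple-grad}.} The plan is to follow the strategy of Lemma \ref{quadruple} verbatim, replacing the uniform bound \eqref{unif-bd-simp} on $v^{\ep,z}$ by the gradient bound \eqref{unif-grad-simp} and Lemma \ref{double-sum} by Lemma \ref{double-sum-grad}. First I will pass to the auxiliary free system via \eqref{lifespan}, which has infinite lifespan and to which Proposition \ref{bdd-density} applies. Then I will expand the square
\[
\frac{1}{N^4}\Big(\sum_{i\neq j}|\nabla v^{\ep,z}(t,x_i^f,x_j^f)|\Big)^2=\frac{1}{N^4}\sum_{(i,j),(k,\ell)}|\nabla v^{\ep,z}(t,x_i^f,x_j^f)|\,|\nabla v^{\ep,z}(t,x_k^f,x_\ell^f)|,
\]
and separate the terms according to the cardinality of $\{i,j,k,\ell\}\in\{2,3,4\}$.

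For the \emph{principal} term with four distinct indices (cardinality $O(N^4)$), I will apply Proposition \ref{bdd-density} with $\ell=4$ so that $(x_i^f,x_j^f,x_k^f,x_\ell^f)$ has joint density bounded by $C\exp(-\sqrt{|x_1|^2+|x_2|^2+|x_3|^2+|x_4|^2}/C)$. Using the elementary inequality $\sqrt{a}+\sqrt{b}\le\sqrt{2(a+b)}$ I factor the exponential weight as a product and obtain
\[
\E\int_0^T|\nabla v^{\ep,z}(t,x_i^f,x_j^f)|\,|\nabla v^{\ep,z}(t,x_k^f,x_\ell^f)|\,dt\le C\int_0^T\Big(\iint_{\R^{2d}}|\nabla v^{\ep,z}(t,x,y)|\,e^{-\sqrt{|x|^2+|y|^2}/(\sqrt{2}C)}\,dxdy\Big)^2dt.
\]
Pulling out $\sup_{t\in[0,T]}$ inside the spatial integral and then applying Lemma \ref{double-sum-grad} (which shows exactly that this spatial integral, with $\sup_t$ inside, vanishes as $\ep\to0$ followed by $|z|\to0$) will make the principal contribution vanish.

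For the \emph{lower-order} terms (three or two distinct indices, cardinality $O(N^3)$ and $O(N^2)$ respectively), I will use the crude gradient bound \eqref{unif-grad-simp} on one of the two factors. Under the scaling \eqref{local}, the quantity $|\nabla v^{\ep,z}|/N$ is bounded by a constant independent of $N$, so e.g.\ for the term with $i=k$ and $j,\ell$ distinct from each other and from $i$,
\[
\frac{1}{N^4}\sum_{i,j,\ell\text{ distinct}}|\nabla v^{\ep,z}(t,x_i^f,x_j^f)|\,|\nabla v^{\ep,z}(t,x_i^f,x_\ell^f)|\le\frac{C}{N^2}\sum_{i\neq j}|\nabla v^{\ep,z}(t,x_i^f,x_j^f)|,
\]
and the right-hand side vanishes in $L^1(\P)$ integrated over $[0,T]$ by Lemma \ref{double-sum-grad}. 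The other lower-order combinations are treated identically.

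The main obstacle is the handling of the lower-order terms: unlike in Lemma \ref{quadruple} where the analogous factor $\kappa(\ep)$ from \eqref{unif-bd-simp} tends to $0$, here the quantity $\ep^{1-d}/N$ (resp.\ $|\log\ep|/N$ for $d=1$) is only bounded under \eqref{local}, not vanishing. This is precisely why one must exploit the reduction in cardinality when indices collide (gaining a factor $1/N$) to recover the structure of Lemma \ref{double-sum-grad}, rather than hope for a small pointwise factor as in Lemma \ref{quadruple}.
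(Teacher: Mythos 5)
Your proposal is correct and follows essentially the same route as the paper, which proves this lemma by repeating the decomposition of Lemma \ref{quadruple} with \eqref{unif-grad-simp} in place of \eqref{unif-bd-simp} and Lemma \ref{double-sum-grad} in place of Lemma \ref{double-sum}. In particular, you correctly identify the one genuine point of difference that the paper also flags: the factor $\wt\kappa(\ep)=C\ep^{1-d}N^{-1}$ (resp.\ $C|\log\ep|N^{-1}$ for $d=1$) is only \emph{bounded} under \eqref{local}, not vanishing, so the lower-order terms must be killed by the cardinality reduction combined with the vanishing of the quantity in Lemma \ref{double-sum-grad}.
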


\begin{proof}
First switch to the free system.
The proof is analogous to that of Lemmas \ref{quadruple}, just using the estimate \eqref{unif-grad-simp} instead of \eqref{unif-bd-simp}, and Lemma \ref{double-sum-grad} instead of Lemma \ref{double-sum}. Just note that when performing the analogous step to \eqref{negli-bd} to control the negligible terms, by  \eqref{unif-grad-simp}
\begin{align*}
\frac{1}{N}\left|\nabla v^{\ep,z}(t,x_i^f(t),x_j^f(t))\right|\left|\nabla v^{\ep,z}(t,x_i^f(t),x_\ell^f(t))\right|\le \wt\kappa(\ep)
\left|\nabla v^{\ep,z}(t,x_i^f(t),x_j^f(t))\right|
\end{align*}
where now 
\begin{align*}
\wt \kappa(\ep) := C
\begin{cases}
\ep^{1-d}N^{-1}, \quad d\ge 2\\
|\log\ep|N^{-1}, \quad d=1
\end{cases}
\end{align*}
is uniformly bounded by \eqref{local}.
\end{proof}

\begin{remark}
By Lemmas \ref{quadruple} and \ref{quadruple-grad}, the term $B_2$ \eqref{B2} is negligible in the sense of \eqref{negligible}, after bounding the test functions and $\sup_{k\in K}\|\sigma_k\|_\infty$ by constants. They also show that $B_1$ \eqref{B1} is negligible, since
\begin{align*}
\sum_{i\in\cN(t)}\left(\sum_{j\in\cN(t),j\ne i}\left|v^{\ep,z}(t,x_i^N(t),x_j^N(t))\right|\right)^2&\le 
\left(\sum_{i\neq j\in\cN(t)}\left|v^{\ep,z}(t,x_i^N(t),x_j^N(t))\right|\right)^2\\
\sum_{i\in\cN(t)}\left(\sum_{j\in\cN(t),j\ne i}\left|\nabla v^{\ep,z}(t,x_i^N(t),x_j^N(t))\right|\right)^2&\le 
\left(\sum_{i\neq j\in\cN(t)}\left|\nabla v^{\ep,z}(t,x_i^N(t),x_j^N(t))\right|\right)^2
\end{align*}
\end{remark}

\begin{lemma}\label{final-lem}
For any finite $T, d\ge 1$, we have that
\begin{align*}
\lim_{|z|\to 0}\limsup_{N\to\infty}\E\int_0^Tdt\frac{1}{N^5}\sum_{i\neq  k\in\cN(t)}\theta^\ep(x_i^N(t)-x_k^N(t))\left[\sum_{j\in\cN(t),j\neq i,k}v^{\ep,z}(t,x_i^N(t),x_j^N(t))\right]^2=0.
\end{align*}
\end{lemma}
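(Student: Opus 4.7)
The plan is to mimic the pattern of the preceding lemmas in this section: first pass to the auxiliary free system via \eqref{lifespan}, then estimate the resulting sum by absorbing one factor of $v^{\ep,z}$ into the crude uniform bound from \eqref{unif-bd-simp}, and finally reduce to Lemma \ref{triple-sum}. So after switching to the free system, the task becomes
\[
\lim_{|z|\to 0}\limsup_{N\to\infty}\E\int_0^T\frac{1}{N^5}\sum_{i\ne k=1}^N\theta^\ep(x_i^f(t)-x_k^f(t))\left[\sum_{\substack{j=1\\j\ne i,k}}^N v^{\ep,z}(t,x_i^f(t),x_j^f(t))\right]^2 dt = 0.
\]

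The key step is to expand the square and split the summand into a ``diagonal'' piece (indexed by distinct $(i,j,k)$, cardinality $O(N^3)$) and an ``off-diagonal'' piece ($j\ne j'$, indexed by distinct $(i,j,j',k)$, cardinality $O(N^4)$):
\begin{align*}
\left[\sum_{\substack{j=1\\j\ne i,k}}^N v^{\ep,z}(t,x_i^f,x_j^f)\right]^2 = \sum_{j\ne i,k}\left|v^{\ep,z}(t,x_i^f,x_j^f)\right|^2 + \sum_{\substack{j\ne j'\\j,j'\ne i,k}} v^{\ep,z}(t,x_i^f,x_j^f)\, v^{\ep,z}(t,x_i^f,x_{j'}^f).
\end{align*}
For both pieces I plan to bound one copy of $v^{\ep,z}$ above by the uniform $L^\infty$ estimate $|v^{\ep,z}|\le C\kappa(\ep)N$ from \eqref{unif-bd-simp} (with $\kappa(\ep)$ as in \eqref{error}), and recognise the remainder as a sum of the type already controlled by Lemma \ref{triple-sum}. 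For the diagonal piece, this yields
\begin{align*}
\frac{1}{N^5}\sum_{i\ne k}\theta^\ep(x_i^f-x_k^f)\sum_{j\ne i,k}|v^{\ep,z}|^2 \le \frac{C\kappa(\ep)}{N}\cdot\frac{1}{N^3}\sum_{\substack{(i,j,k)\\\text{distinct}}}\theta^\ep(x_i^f-x_k^f)|v^{\ep,z}(t,x_i^f,x_j^f)|,
\end{align*}
while for the off-diagonal piece, after bounding $|v^{\ep,z}(t,x_i^f,x_{j'}^f)|\le C\kappa(\ep)N$ and summing out the now-free index $j'$ (which contributes an extra factor $O(N)$),
\begin{align*}
\frac{1}{N^5}\sum_{\substack{(i,j,j',k)\\\text{distinct}}}\theta^\ep|v^{\ep,z}(x_i^f,x_j^f)||v^{\ep,z}(x_i^f,x_{j'}^f)| \le C\kappa(\ep)\cdot\frac{1}{N^3}\sum_{\substack{(i,j,k)\\\text{distinct}}}\theta^\ep(x_i^f-x_k^f)|v^{\ep,z}(t,x_i^f,x_j^f)|.
\end{align*}
Taking expectation and integrating in time, the right-hand sides vanish as $N\to\infty$ followed by $|z|\to0$ by Lemma \ref{triple-sum}, once we observe that both $\kappa(\ep)/N$ and $\kappa(\ep)$ tend to $0$ under the scaling \eqref{local} (indeed, in every dimension $d\ge 1$ the definition of $\kappa$ combined with $\ep^{1-d}/N$ (resp.\ $|\log\ep|/N$) being bounded forces $\kappa(\ep)\to 0$).

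The main obstacle, and the reason the statement does not follow directly from Lemma \ref{quadruple}, is the cubic character of the expression: a naive attempt invoking Proposition \ref{bdd-density} with $\ell=4$ for the full triple product $\theta^\ep\cdot v^{\ep,z}\cdot v^{\ep,z}$ would leave a non-vanishing $L^2$-type integral of $v^{\ep,z}$ over $\R^{2d}$. Sacrificing one uniform $L^\infty$ bound on $|v^{\ep,z}|$ (at the price of $\kappa(\ep)N$) is what makes the scaling close, exactly in the spirit of Lemma \ref{lem:H6}, and the whole argument rests on the observation—already implicit in \eqref{local}—that this sacrifice is affordable because $\kappa(\ep)\to 0$.
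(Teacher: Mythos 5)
Your proposal is correct and follows essentially the same route as the paper: the paper simply applies $(\sum_{i=1}^L a_i)^2\le L\sum_{i=1}^L a_i^2$ in place of your explicit diagonal/off-diagonal expansion, but both arguments then sacrifice one factor of $v^{\ep,z}$ to the crude bound $|v^{\ep,z}|\le\kappa(\ep)N$ from \eqref{unif-bd-simp} and reduce the remainder to Lemma \ref{triple-sum}, with $\kappa(\ep)$ vanishing under \eqref{local}.
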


\begin{proof}
By the elementary inequality $(\sum_{i=1}^La_i)^2\le L\sum_{i=1}^La_i^2$ for any $L\in\N$, and \eqref{unif-bd-simp}, we have that 
\begin{align*}
&\frac{1}{N^5}\sum_{i\neq  k\in\cN(t)}\theta^\ep(x_i^N(t)-x_k^N(t))\left[\sum_{j\in\cN(t),j\neq i,k}v^{\ep,z}(t,x_i^N(t),x_j^N(t))\right]^2\\
&\le \frac{1}{N^4}\sum_{i\neq  k\in\cN(t)}\theta^\ep(x_i^N(t)-x_k^N(t))\sum_{j\in\cN(t),j\neq i,k}\left|v^{\ep,z}(t,x_i^N(t),x_j^N(t))\right|^2\\
&\le \frac{\kappa(\ep)}{N^3}\sum_{i\neq  k\in\cN(t)}\theta^\ep(x_i^N(t)-x_k^N(t))\sum_{j\in\cN(t),j\neq i,k}\left|v^{\ep,z}(t,x_i^N(t),x_j^N(t))\right|
\end{align*}
where $\kappa(\ep)\to 0$ is as in \eqref{error}. 
The conclusion then follows from the statement of Lemma \ref{triple-sum}.
\end{proof}

\begin{remark}
By Lemma \ref{final-lem}, the term $B_3$ \eqref{B3} is negligible in the sense of \eqref{negligible}. Indeed, there are five terms inside the square, which we first use the elementary inequality $(\sum_{i=1}^5a_i)^2\le 5\sum_{i=1}^5a_i^2$, then we handle term by term, of which the first four are of the form in the lemma (after bounding the test functions by constants), and the last one by Lemma \ref{lem:H6} together with the fact $N^{-2}|v^{\ep,z}(\cdot)|^2\le \kappa(\ep)^2$.
\end{remark}

To conclude, we have thus far shown that Proposition \ref{ppn:tanaka} holds, by the discussion around \eqref{negligible}.

\section{Pathwise uniqueness of the SPDE \eqref{spde} and regularity of its solutions}\label{unique}
Consider the system%
\begin{align*}
du_{m}\left(  t,x\right)   &  =\left(  \mathcal{L}u_{m}\left(  t,x\right)
+F_{m}\left(  u\left(  t,x\right)  \right)  \right)  dt+\sum_{k\in K}%
\sigma_{k}\cdot\nabla u_{m}\left(  t,x\right)  dW_{t}^{k}\\
m &  =1,...,M,\qquad u=\left(  u_{1},...,u_{M}\right)
\end{align*}
where%
\begin{align}
F_{m}\left(  u\left(  t,x\right)  \right)  &=\sum_{n=1}^{m-1}u_{n}\left(
t,x\right)  u_{m-n}\left(  t,x\right)  -2u_{m}\left(  t,x\right)  \sum
_{n=1}^{M}u_{n}\left(  t,x\right)  \nonumber\\
\mathcal{L}u_{m} &  =\frac{\lambda^2}{2}\Delta u_{m}+\frac{1}{2}\operatorname{div}%
\left(  Q\left( x,x\right)  \nabla u_{m}\right)  \label{elliptic}\\
Q\left(  x,y\right)   &  =\sum_{k\in K}\sigma_{k}\left(  x\right)
\otimes\sigma_{k}\left(  y\right)\nonumber
\end{align}
with initial condition $\left(  r_1p_{1},...,r_Mp_{M}\right)  $, where $\sum_{m=1}^M r_m=1$,
satisfying%
\[
0\leq p_{m}\leq C,\qquad\int p_{m}\left(  x\right)  dx\leq1
\]
for every $m=1,...,M$. Notice that, also%
\[
\int p_{m}^{2}\left(  x\right)  dx\leq C\int p_{m}\left(  x\right)  dx\leq C,
\]
property often used below also for $u_{m}\left(  t,x\right)  $.

In the equations above \eqref{elliptic}, $\mathcal{L}$  is the resulting elliptic operator after the
reformulation of Stratonovich in It\^{o} form. Assume $\sigma_{k}\in C_{b}%
^{\infty}(\R^d; \R^d)$, $\operatorname{div}\sigma_{k}=0$.

\begin{definition}\label{def:spde-w}
\label{def weak sol}Given a filtered probability space $(\Omega,\cF,\{\cG_t\}_{t\ge0},\P)$ and Brownian motions $\{W_t^k\}_{k\in K}$,
by very weak solution we mean a progressively measurable process $u\left(
t,x\right)  $ such that, for some constant $U>0$,
\[
\mathbb{P}\left(  0\leq u_{m}\left(  t,x\right)  \leq U\text{ for all }\left(
t,x\right)  \right)  =1\text{ for all }m=1,...,M
\]%
\[
\mathbb{P}\left(  \int u_{m}\left(  t,x\right)  dx\leq1\text{ for all
}t\right)  =1\text{ for all }m=1,...,M
\]%
\begin{align*}
\left\langle u_{m}\left(  t\right)  ,\phi\right\rangle  &  =\left\langle
r_mp_{m},\phi\right\rangle +\int_{0}^{t}\left\langle u_{m}\left(
s\right)  ,\mathcal{L}^{\ast}\phi\right\rangle ds+\int_{0}^{t}\left\langle
F_{m}\left(  u\left(  s\right)  \right)  ,\phi\right\rangle ds\\
&  +\sum_{k\in K}\int_0^t\left\langle u_{m}\left(  s\right)  ,\sigma_{k}\cdot
\nabla\phi\right\rangle dW_{s}^{k}%
\end{align*}
for all $\phi\in C_{c}^{\infty}$, $\P$-a.s. If in addition they satisfy%
\[
\max_{m=1,...,M}\mathbb{E}\int_{0}^{T}\int\left\vert \nabla u_{m}\left(
t,x\right)  \right\vert ^{2}dxdt<\infty
\]
then they are called weak solutions.
\end{definition}

As already remarked for $p_{m}$, from the assumptions it follows that $\int
u_{m}^{2}\left(  t,x\right)  dx\leq U$ a.s., for every $m=1,...,M$.

\begin{lemma}
\label{lemma regularization}Very weak solutions are also weak solutions.
\end{lemma}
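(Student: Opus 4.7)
The missing ingredient is the $H^1$ energy bound $\max_m \mathbb{E}\int_0^T \|\nabla u_m\|_{L^2}^2\,dt<\infty$. My plan is a standard mollification-based $L^2$ energy estimate, whose decisive feature is the exact cancellation between the It\^o correction from the transport noise and the divergence-form part of $\mathcal{L}$.

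Let $\rho^\delta \in C_c^\infty(\R^d)$ be a standard mollifier and set $u_m^\delta(t,x) := (u_m(t,\cdot)*\rho^\delta)(x)$. The assumptions $0\le u_m\le U$ and $\int u_m\le 1$ imply $\|u_m(t)\|_{L^2}^2\le U$ uniformly, so all pairings below make sense. Testing the very weak formulation against $\phi(\cdot) = \rho^\delta(x-\cdot)$ for each fixed $x$ yields a pointwise-in-$x$ equation
\[
du_m^\delta = \bigl[\mathcal{L}u_m^\delta + F_m(u)*\rho^\delta + R_\mathcal{L}^\delta\bigr]\,dt + \sum_{k\in K}\bigl[\sigma_k \cdot \nabla u_m^\delta + R_k^\delta\bigr]\,dW_t^k,
\]
with spatial commutators $R_\mathcal{L}^\delta = [\mathcal{L},\rho^\delta *]u_m$ and $R_k^\delta = [\sigma_k\cdot\nabla,\rho^\delta *]u_m$. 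Since $\sigma_k, Q(\cdot,\cdot)\in C_b^\infty$, the classical DiPerna--Lions commutator lemma (first-order for $R_k^\delta$, applied iteratively to $\mathcal{L}$ after the reformulation $\tfrac{1}{2}\operatorname{div}(Q(x,x)\nabla\cdot)=\tfrac{1}{2}\sum_k\sigma_k\cdot\nabla(\sigma_k\cdot\nabla\,\cdot)$ enabled by $\operatorname{div}\sigma_k=0$) gives $\sum_k\|R_k^\delta(t)\|_{L^2}^2\to 0$ and $\int_0^t\langle u_m^\delta, R_\mathcal{L}^\delta\rangle\,ds\to 0$ in $L^1(\P)$ as $\delta \to 0$.

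Applying It\^o's formula to the scalar process $\|u_m^\delta(t)\|_{L^2}^2$ (justified by a spatial Fubini since $u_m^\delta$ is smooth in $x$), the principal terms combine by integration by parts into
\[
2\langle u_m^\delta, \mathcal{L}u_m^\delta\rangle + \sum_{k\in K}\|\sigma_k\cdot\nabla u_m^\delta\|_{L^2}^2 = -\lambda^2\|\nabla u_m^\delta\|_{L^2}^2,
\]
because the It\^o covariation contribution $\sum_k\int \nabla u_m^\delta\cdot Q(x,x)\nabla u_m^\delta\,dx$ exactly cancels the divergence-form part of $2\langle u_m^\delta, \mathcal{L}u_m^\delta\rangle$, leaving only the molecular dissipation. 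The drift of the transport It\^o-martingale vanishes because $\langle u_m^\delta,\sigma_k\cdot\nabla u_m^\delta\rangle = -\tfrac{1}{2}\int(\operatorname{div}\sigma_k)(u_m^\delta)^2\,dx = 0$, so the only surviving martingale contribution in expectation comes from $R_k^\delta$ and vanishes by BDG combined with the commutator bounds. The cross term $2\sum_k\langle\sigma_k\cdot\nabla u_m^\delta, R_k^\delta\rangle$ is absorbed via Young's inequality into $\tfrac{\lambda^2}{2}\|\nabla u_m^\delta\|_{L^2}^2$, and the nonlinearity is controlled pointwise by $|\langle u_m^\delta, F_m(u)*\rho^\delta\rangle|\le 3MU^2$ (using $0\le u_m\le U$ and $\int u_m\le 1$). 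Taking expectation yields
\[
\mathbb{E}\|u_m^\delta(T)\|_{L^2}^2 + \tfrac{\lambda^2}{2}\mathbb{E}\int_0^T\|\nabla u_m^\delta\|_{L^2}^2\,dt \le \|r_m p_m\|_{L^2}^2 + C(U,M,T,\{\sigma_k\},\lambda) + o_\delta(1),
\]
and letting $\delta\to 0$, by weak lower semicontinuity of $\|\nabla\cdot\|_{L^2}$ against $\nabla u_m^\delta\to\nabla u_m$ in $\mathcal{D}'$, we conclude the thesis uniformly in $m\le M$.

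The main technical obstacle is the second-order commutator $R_\mathcal{L}^\delta$: a naive expansion reveals an $O(\delta^{-1})$ contribution from the second derivatives of $\rho^\delta$, which cancels only after exploiting both the symmetry of $Q$ and the antisymmetry $\int z_i\,\partial_j\rho(z)\,dz=-\delta_{ij}$. Since $R_\mathcal{L}^\delta$ always appears paired against the smooth function $u_m^\delta$ in the energy identity, it suffices to show that the integral $\int_0^t\langle u_m^\delta, R_\mathcal{L}^\delta\rangle\,ds$ vanishes in expectation rather than that $\|R_\mathcal{L}^\delta\|_{L^2}$ itself does, and this is achieved by transferring derivatives onto $u_m^\delta$ via the self-adjointness of $\mathcal{L}$ and reducing to iterated first-order DiPerna--Lions commutators acting on the smooth factor, for which the classical estimates apply.
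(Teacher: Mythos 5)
Your proposal follows the same strategy as the paper's: mollify $u_m$, apply It\^o's formula to the $L^2$ norm, balance the It\^o quadratic-variation term against the drift dissipation, control the commutators, take expectations and pass to the limit by weak lower semicontinuity. The ``exact cancellation'' $2\langle u_m^\delta,\mathcal{L}u_m^\delta\rangle + \sum_{k\in K}\|\sigma_k\cdot\nabla u_m^\delta\|_{L^2}^2 = -\lambda^2\|\nabla u_m^\delta\|_{L^2}^2$ is correct and is precisely the content of the paper's inequality $\sum_k\|\sigma_k\cdot\nabla f\|_{L^2}^2\le -2\eta\langle\mathcal{L}f,f\rangle$ with $\eta<1$: the $Q$-part of $\mathcal{L}$ matches the noise covariance identically, and the molecular Laplacian $\lambda^2\Delta$ provides strict coercivity. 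Your absorption of the cross term $2\sum_k\langle\sigma_k\cdot\nabla u_m^\delta,R_k^\delta\rangle$ by Young into $\tfrac{\lambda^2}{2}\|\nabla u_m^\delta\|^2$ is the same bookkeeping as the paper's $(1+\delta)$-trick.

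Two remarks on the commutators, which is where your account is slightly off. First, you invoke DiPerna--Lions to claim $\sum_k\|R_k^\delta\|_{L^2}^2\to0$; this is true but unnecessary and in fact a misleading thing to aim for, because only a uniform-in-$\delta$ bound is needed to absorb the cross term and to control the residual quadratic-variation contribution. This is what the paper's Lemma \ref{lemma commutator} proves directly: $\sum_k\|R_{m,k}^\epsilon(t)\|_{L^2}^2\le C_Q\|u_m(t)\|_{L^2}^2$, a bound, not a vanishing statement. Second, you correctly flag the commutator $R_\mathcal{L}^\delta=\rho_\delta*(\mathcal{L}u_m)-\mathcal{L}u_m^\delta$ from mollifying the variable-coefficient drift (the paper writes $\mathcal{L}u_m^\epsilon$ directly in the mollified equation, silently suppressing it), but your treatment via ``iterated first-order DiPerna--Lions'' is vague and the $O(\delta^{-1})$ concern you raise does not actually arise once derivatives are transferred. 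The clean argument is: the Laplacian commutes with $\rho_\delta*$, so $R_\mathcal{L}^\delta = -\tfrac12\operatorname{div}\left([Q(\cdot,\cdot)\nabla,\rho_\delta*]u_m\right)$; after one integration by parts, $\langle u_m^\delta,R_\mathcal{L}^\delta\rangle = \tfrac12\langle\nabla u_m^\delta,[Q(\cdot,\cdot)\nabla,\rho_\delta*]u_m\rangle$, and the first-order commutator on the right is bounded in $L^2$ uniformly in $\delta$ by the very same estimate as Lemma \ref{lemma commutator}; Young's inequality then absorbs $\|\nabla u_m^\delta\|$ into the coercivity, with the remainder controlled by $\|u_m\|_{L^2}^2\le U$. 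No iteration and no cancellation of $O(\delta^{-1})$ terms is required. With these adjustments the argument is sound and coincides with the paper's proof in substance.
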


\begin{corollary}
\label{Coroll uniqueness}Weak solutions are pathwise unique.
\end{corollary}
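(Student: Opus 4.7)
The plan is to estimate the $L^2$-norm of the difference of two weak solutions $u$ and $v$ that share the same initial data and the same driving Brownian motions, and to close a Gronwall estimate. Set $w_m := u_m - v_m$. From the definition of weak solution, $\|w_m(t)\|_{L^\infty}\leq 2U$ and $\|w_m(t)\|_{L^1}\leq 2$, so $\|w_m(t)\|_{L^2}^2 \leq 4U$ almost surely, and $\E\int_0^T\|\nabla w_m\|_{L^2}^2\,dt<\infty$. I would apply the It\^o formula, in the Krylov--Rozovskii variational framework with the Gelfand triple $H^1(\R^d)\subset L^2(\R^d)\subset H^{-1}(\R^d)$, to the process $\|w_m(t)\|_{L^2}^2$, obtaining
\begin{align*}
d\|w_m\|_{L^2}^2 &= 2\langle w_m,\mathcal{L}w_m\rangle\,dt + 2\langle w_m, F_m(u)-F_m(v)\rangle\,dt \\
&\quad + 2\sum_{k\in K}\langle w_m,\sigma_k\cdot\nabla w_m\rangle\,dW_t^k + \sum_{k\in K}\|\sigma_k\cdot\nabla w_m\|_{L^2}^2\,dt.
\end{align*}

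The crux of the argument is the exact cancellation between the It\^o correction of the transport noise and the second-order piece of $\mathcal{L}$, which is precisely the Stratonovich-to-It\^o contribution. By integration by parts, $2\langle w_m,\mathcal{L}w_m\rangle = -\lambda^2\|\nabla w_m\|_{L^2}^2 - \int\langle Q(x,x)\nabla w_m,\nabla w_m\rangle\,dx$, while the quadratic variation of the noise gives $\sum_k\|\sigma_k\cdot\nabla w_m\|_{L^2}^2 = \int\langle Q(x,x)\nabla w_m,\nabla w_m\rangle\,dx$, so the two terms cancel. Moreover, the stochastic integrand vanishes because $\operatorname{div}\sigma_k=0$: $2\langle w_m,\sigma_k\cdot\nabla w_m\rangle = \int\sigma_k\cdot\nabla(w_m^2)\,dx = 0$. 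Dropping the nonpositive $-\lambda^2\|\nabla w_m\|_{L^2}^2$ and taking expectation, I am left with
\begin{align*}
\E\|w_m(t)\|_{L^2}^2 \leq 2\,\E\int_0^t\langle w_m,F_m(u)-F_m(v)\rangle\,ds.
\end{align*}

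To bound the nonlinear term I would exploit the uniform $L^\infty$ bound on $u,v$: expanding each product (for example $u_n u_{m-n} - v_n v_{m-n} = w_n u_{m-n} + v_n w_{m-n}$) yields $|F_m(u)-F_m(v)| \leq C(M)\,U\sum_{n=1}^M |w_n|$ pointwise, whence by Cauchy--Schwarz and Young $|\langle w_m,F_m(u)-F_m(v)\rangle|\leq C(M,U)\sum_{n=1}^M \|w_n\|_{L^2}^2$. Summing over $m$ and setting $\Phi(t):=\sum_{m=1}^M \E\|w_m(t)\|_{L^2}^2$, I obtain $\Phi(t)\leq C\int_0^t\Phi(s)\,ds$ with $\Phi(0)=0$, so Gronwall's lemma forces $\Phi\equiv 0$ on $[0,T]$, and hence $w_m\equiv 0$ almost everywhere, giving pathwise uniqueness after passage to a continuous modification.

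The main obstacle is legitimising the It\^o formula for $\|\cdot\|_{L^2}^2$: a priori $\mathcal{L}u_m$ lives in $H^{-1}$ and the equation is only weak in the analytic sense, so the formal computation above is not automatic. This is exactly what Lemma \ref{lemma regularization} is designed to handle, as it upgrades very weak to weak solutions and thereby places the system into the Krylov--Rozovskii variational setting: one checks that $\mathcal{L}w_m\in H^{-1}$, that $F_m(u)-F_m(v)\in L^2\subset H^{-1}$ because of the $L^\infty$ bound, and that $w\mapsto\sigma_k\cdot\nabla w$ maps $H^1\to L^2$; the standard variational It\^o formula then applies to $\|\cdot\|_{L^2}^2$. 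A secondary check is that the stochastic integral is a genuine martingale, which follows from the uniform bound $\|w_m\|_{L^2}\leq 2\sqrt{U}$ combined with $\E\int_0^T\|\nabla w_m\|_{L^2}^2\,dt<\infty$.
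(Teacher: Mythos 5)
Your proposal is correct and follows essentially the same route as the paper: apply the variational It\^o formula to $\|u_m-u_m'\|_{L^2}^2$, use $\langle \sigma_k\cdot\nabla w_m, w_m\rangle=0$ and the cancellation of $\sum_k\|\sigma_k\cdot\nabla w_m\|_{L^2}^2$ against the second-order part of $\mathcal L$ (the paper states this as the inequality $\sum_k\|\sigma_k\cdot\nabla w_m\|_{L^2}^2\le -2\langle\mathcal Lw_m,w_m\rangle$, which is your exact identity plus the harmless $-\lambda^2\|\nabla w_m\|_{L^2}^2$), bound $F_m(u)-F_m(u')$ via the uniform $L^\infty$ bound, and conclude by Gronwall. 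Your additional remarks on legitimising the It\^o formula through Lemma \ref{lemma regularization} and the Gelfand triple match the paper's reliance on the same lemma and the cited variational references.
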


\begin{proposition}\label{Proposition regularity nonlinear}
Let $n>d/4$ be an integer and let
$\left(  u_{m}\right)  _{m=1,...,M}$ be the unique solution given by Corollary
\ref{Coroll uniqueness}. If the initial conditions $u_{m}\left(  0\right)  $
belong to $W^{2n,2}\left(  \mathbb{R}^{d}\right)  $, $m=1,...,M$, then
$\left(  u_{m}\right)  _{m=1,...,M}$ has the following regularity:%
\[
\mathbb{E}\left[  \sup_{t\in\left[  0,T\right]  }\left\Vert u_{m}\left(
t,\cdot\right)  \right\Vert _{W^{2n,2}}^{2}\right]  +\mathbb{E}\int_{0}%
^{T}\left\Vert u_{m}\left(  t,\cdot\right)  \right\Vert _{W^{2n+1,2}}%
^{2}dt<\infty
\]
for every $m=1,...,M$. In particular, if $u_{m}\left(  0\right)  \in
C^{\infty}\left(  \mathbb{R}^{d}\right)  $, $m=1,...,M$, with square
integrable derivatives of all orders, then $\P$-a.s. one has $u_{m}\left(
t\right)  \in C^{\infty}\left(  \mathbb{R}^{d}\right)  $ for all $t\in\left[
0,T\right]  $ and $m=1,...,M$.
\end{proposition}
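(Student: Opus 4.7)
The plan is to establish $L^2_x$ energy estimates on every derivative $D^\alpha u_m$, $|\alpha|\le 2n$, exploiting the classical It\^o cancellation (for divergence-free $\sigma_k$) between the quadratic variation of the transport noise and the eddy-diffusion piece of $\mathcal{L}$. Because Definition \ref{def weak sol} only provides very weak regularity, the bound would be established first on smooth approximants (e.g.\ Galerkin truncation driven by mollified initial data $p_m\ast\rho_\delta$), uniformly in the truncation parameter, and then passed to the limit, identifying the limit with $u_m$ via Corollary \ref{Coroll uniqueness}.

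For fixed $|\alpha|\le 2n$, write $D^\alpha(\sigma_k\cdot\nabla u_m)=\sigma_k\cdot\nabla D^\alpha u_m+R^\alpha_k u_m$, where the commutator $R^\alpha_k:=[D^\alpha,\sigma_k\cdot\nabla]$ is a differential operator of order $|\alpha|$ with $C^\infty_b$ coefficients (using $\sigma_k\in C^\infty_b$). Applying It\^o to $\tfrac12\|D^\alpha u_m\|^2_{L^2}$ and integrating by parts, the principal parts of $\langle D^\alpha u_m,\mathcal{L}D^\alpha u_m\rangle$ and of the It\^o correction $\tfrac12\sum_k\|\sigma_k\cdot\nabla D^\alpha u_m\|^2_{L^2}$ each equal $\mp\tfrac12\int Q(x,x)\nabla D^\alpha u_m\cdot\nabla D^\alpha u_m\,dx$ and cancel, leaving the coercive dissipation $-\tfrac{\lambda^2}{2}\|\nabla D^\alpha u_m\|^2_{L^2}$ together with lower-order remainders $\sum_k\|R^\alpha_k u_m\|^2_{L^2}$, $\langle D^\alpha u_m,[D^\alpha,\mathcal{L}]u_m\rangle$, and cross terms $\sum_k\langle\sigma_k\cdot\nabla D^\alpha u_m,R^\alpha_k u_m\rangle$; the first two are bounded by $C\|u_m\|^2_{W^{2n,2}}$, while the cross term is split by Young's inequality as $\varepsilon\|\nabla D^\alpha u_m\|^2_{L^2}+C_\varepsilon\|u_m\|^2_{W^{2n,2}}$, with $\varepsilon<\lambda^2/2$ absorbed into the dissipation. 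For the coagulation drift I would use the Moser-type estimate $\|D^\alpha(u_j u_\ell)\|_{L^2}\le C(\|u_j\|_{L^\infty}\|u_\ell\|_{W^{|\alpha|,2}}+\|u_\ell\|_{L^\infty}\|u_j\|_{W^{|\alpha|,2}})$ together with the a.s.\ bound $\|u_j\|_{L^\infty}\le U$ from Definition \ref{def weak sol} to majorize $\langle D^\alpha u_m,D^\alpha F_m(u)\rangle$ by $C\sum_{j=1}^M\|u_j\|^2_{W^{2n,2}}$; the hypothesis $n>d/4$ (equivalently $2n>d/2$) delivers $W^{2n,2}\hookrightarrow L^\infty$, which is useful at the approximation stage where the $L^\infty$ bound $U$ must be re-derived for the smooth approximants. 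The surviving martingale $\sum_k\int_0^\cdot\langle D^\alpha u_m,R^\alpha_k u_m\rangle\,dW^k_t$ (the principal piece vanishes by $\operatorname{div}\sigma_k=0$) is handled via BDG and absorbed in the same way.

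Summing over $|\alpha|\le 2n$ and $m=1,\dots,M$, taking $\E\sup_{t\le T}$, and invoking Gr\"onwall's lemma yields
\[
\E\Bigl[\sup_{t\in[0,T]}\|u_m(t)\|^2_{W^{2n,2}}\Bigr]+\E\int_0^T\|u_m(t)\|^2_{W^{2n+1,2}}\,dt\le C\bigl(1+\|u_m(0)\|^2_{W^{2n,2}}\bigr);
\]
the $C^\infty$ statement follows by iterating in $n$ and invoking standard Sobolev embedding. The main obstacle I anticipate is the rigorous justification at the weak-solution level: ensuring that the smooth approximants actually inherit the $L^\infty$ bound $U$ uniformly (so the nonlinear estimate stays closed), and verifying that the commutator cancellation survives the approximation. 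The hypothesis $\sigma_k\in C^\infty_b$ is essential here, as it keeps $R^\alpha_k$ at order $|\alpha|$ rather than higher; an alternative route would be a Friedrichs' commutator argument applied directly to the very weak formulation, proceeding by induction on $|\alpha|$.
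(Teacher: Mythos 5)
Your route — direct $L^2$ energy estimates on $D^\alpha u_m$, $|\alpha|\le 2n$, with the commutator decomposition $D^\alpha(\sigma_k\cdot\nabla u_m)=\sigma_k\cdot\nabla D^\alpha u_m+R^\alpha_k u_m$ and the exact cancellation between the It\^o correction and the eddy-diffusion part of $\mathcal{L}$ — is genuinely different from the paper's. The paper never differentiates the equation: it works in the mild formulation, introduces the auxiliary unknowns $v_{m,k}=\sigma_k\cdot\nabla u_m$ satisfying their own mild equations, and closes a Banach fixed-point argument in $W^{2n,2}$ using Lemma \ref{lemma super ellipticity}, i.e.\ the bound $\sum_k\int_s^T\|\sigma_k\cdot\nabla e^{(t-s)\mathcal{L}}f\|^2_{W^{2n,2}}dt\le(\eta+C_{n,K}(T-s))\|f\|^2_{W^{2n,2}}$ with $\eta<1$; the commutator there appears only once, as the operator identity $\sigma_k\cdot\nabla(1-\mathcal{L})=(1-\mathcal{L})\sigma_k\cdot\nabla+D_k^{(2)}$, which propagates the $n=0$ coercivity to all orders. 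Your approach is the more classical a priori-estimate strategy and exposes the parabolic structure more transparently; the paper's buys a construction of the regular solution that never needs Galerkin approximants or It\^o's formula for $\|D^\alpha u_m\|^2_{L^2}$ at a regularity level not yet known to hold.

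The one genuine gap is exactly the obstacle you flag at the end, and you should not leave it as an anticipated difficulty: your nonlinear estimate closes only through the a.s.\ bound $\|u_j\|_\infty\le U$, and there is no reason a Galerkin (or spectrally truncated) approximant of this coagulation system satisfies a maximum principle, so the bound $U$ need not survive the approximation; without it the Moser estimate degenerates to a quadratic bound $\|u_ju_\ell\|_{W^{2n,2}}\le C\|u_j\|_{W^{2n,2}}\|u_\ell\|_{W^{2n,2}}$ and Gr\"onwall gives only a local-in-time result. The paper's fix is to replace $F_m$ by $\widetilde F_m$, obtained by composing each factor with a smooth compactly supported $\chi$ with $\chi(a)=a$ for $|a|\le U+1$: the truncated nonlinearity is globally Lipschitz on $W^{2n,2}$ with no $L^\infty$ input, the regular solution of the truncated system is constructed unconditionally, and a posteriori it coincides with the solution of Corollary \ref{Coroll uniqueness} because the latter takes values in $[0,U]$, so $\chi(u^0_m)=u^0_m$ and both solve the same equation in the low-regularity class where uniqueness holds. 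Grafting this truncation onto your energy scheme (and identifying the limit with $u_m$ via uniqueness, as you propose) would make your argument complete. Two smaller points to tighten: the term $\langle D^\alpha u_m,[D^\alpha,\mathcal{L}]u_m\rangle$ involves $|\alpha|+1$ derivatives of $u_m$, so it is not bounded by $C\|u_m\|^2_{W^{2n,2}}$ outright — you must integrate by parts once or absorb an $\varepsilon\|u_m\|^2_{W^{2n+1,2}}$ into the summed dissipation; and the justification of It\^o's formula for $\|D^\alpha u_m\|^2_{L^2}$ must itself be carried out on the approximants, since at the weak-solution level $u_m$ is only known to lie in $W^{1,2}$.
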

The proof of Proposition \ref{Proposition regularity nonlinear} is postponed to Section \ref{sec:free}, since it shares some technical ingredients with the proofs presented in that section.


\subsection{Proof of Lemma \ref{lemma regularization}}

\subsubsection{Preparation}

One can prove (cf. \cite{Fla 95 book}, \cite{Flandoli LNM}) that there
exists $\eta<1$ such that
\[
\sum_{k\in K}\left\Vert \sigma_{k}\cdot\nabla f\right\Vert _{L^{2}}^{2}%
\leq-2\eta\left\langle \mathcal{L}f,f\right\rangle
\]
for all $f\in C_{c}^{\infty}$. We want to exploit this property by means of an
energy inequality. For this purpose we need to apply It\^{o} formula but
$u_{m}$ does not have the necessary regularity, in particular the term
$\left\langle \mathcal{L}u_{m}\left(  t\right)  ,u_{m}\left(  t\right)
\right\rangle $ is not well defined (in a sense its good definition is our
thesis). Thus we take a smooth symmetric density $\rho$, with compact support
in the unitary ball $B\left(  0,1\right)  $, we define $\rho_{\epsilon}\left(
x\right)  =\epsilon^{-d}\rho\left(  \epsilon^{-1}x\right)  $ and set
\[
u_{m}^{\epsilon}\left(  t,x\right)  =\left(  \rho_{\epsilon}\ast u_{m}\left(
t\right)  \right)  \left(  x\right)  =\int\rho_{\epsilon}\left(  x-y\right)
u_{m}\left(  t,y\right)  dy.
\]
The process $u_{m}^{\epsilon}\left(  t,x\right)  $ is smooth in $x$; since
$\int u_{m}^{2}\left(  t,x\right)  dx\leq U$ a.s., from the smoothness \ and
compact support of $\rho$ plus Young's inequality for convolutions, it follows
in particular $u_{m}^{\epsilon}\left(  t\right)  \in W^{2,2}$ a.s. (but the
$W^{2,2}$-norm depends on $\epsilon$), hence the term $\left\langle
\mathcal{L}u_{m}^{\epsilon}\left(  t\right)  ,u_{m}^{\epsilon}\left(
t\right)  \right\rangle $ is well defined.

From the weak formulation, using a test function of the form $\rho_{\epsilon
}\ast\phi$ and the arbitrariety of $\phi$, we easily get%
\begin{align*}
u_{m}^{\epsilon}\left(  t\right)    & =u_{m}^{\epsilon}\left(  0\right)
+\int_{0}^{t}\left(  \mathcal{L}u_{m}^{\epsilon}\left(  s\right)
+\rho_{\epsilon}\ast F_{m}\left(  u\left(  s\right)  \right)  \right)  ds\\
& +\sum_{k\in K}\int_{0}^{t}\rho_{\epsilon}\ast\left(  \sigma_{k}\cdot\nabla
u_{m}\left(  s\right)  \right)  dW_{s}^{k}%
\end{align*}
where all processes can be interpreted, for instance, as $L^{2}$-valued
processes and where the term $\rho_{\epsilon}\ast\left(  \sigma_{k}\cdot\nabla
u_{m}\left(  t\right)  \right)  $ is a short notation for
\[
\left[  \rho_{\epsilon}\ast\left(  \sigma_{k}\cdot\nabla u_{m}\left(
t\right)  \right)  \right]  \left(  x\right)  :=\int\nabla_{x}\rho_{\epsilon
}\left(  x-y\right)  \cdot\sigma_{k}\left(  y\right)  u_{m}\left(  t,y\right)
dy.
\]
Now $u_{m}^{\epsilon}\left(  t\right)  $ is regular enough to apply It\^{o}
formula (cf. \cite{KrylovRozovskii}, \cite{Pardoux}, \cite{PrevRoeckner}):%
\begin{align*}
d\left\Vert u_{m}^{\epsilon}\left(  t\right)  \right\Vert _{L^{2}}^{2} &
=2\left\langle \mathcal{L}u_{m}^{\epsilon}\left(  t\right)  ,u_{m}^{\epsilon
}\left(  t\right)  \right\rangle dt+2\left\langle \rho_{\epsilon}\ast
F_{m}\left(  u\left(  t\right)  \right)  ,u_{m}^{\epsilon}\left(  t\right)
\right\rangle dt\\
&  +2\sum_{k\in K}\left\langle \rho_{\epsilon}\ast\left(  \sigma_{k}%
\cdot\nabla u_{m}\left(  t\right)  \right)  ,u_{m}^{\epsilon}\left(  t\right)
\right\rangle dW_{t}^{k}+\sum_{k\in K}\left\Vert \rho_{\epsilon}\ast\left(
\sigma_{k}\cdot\nabla u_{m}\left(  t\right)  \right)  \right\Vert _{L^{2}}%
^{2}dt.
\end{align*}
Then we introduce the commutators. Let us write%
\[
\rho_{\epsilon}\ast\left(  \sigma_{k}\cdot\nabla u_{m}\left(  t\right)
\right)  =\sigma_{k}\cdot\nabla u_{m}^{\epsilon}\left(  t\right)
+R_{m,k}^{\epsilon}\left(  t\right)
\]
where $R_{m,k}^{\epsilon}\left(  t\right)  $ is defined by the identity. We
have%
\begin{align*}
d\left\Vert u_{m}^{\epsilon}\left(  t\right)  \right\Vert _{L^{2}}^{2} &
=2\left\langle \mathcal{L}u_{m}^{\epsilon}\left(  t\right)  ,u_{m}^{\epsilon
}\left(  t\right)  \right\rangle dt+2\left\langle \rho_{\epsilon}\ast
F_{m}\left(  u\left(  t\right)  \right)  ,u_{m}^{\epsilon}\left(  t\right)
\right\rangle dt\\
&  +2\sum_{k\in K}\left\langle R_{m,k}^{\epsilon}\left(  t\right)
,u_{m}^{\epsilon}\left(  t\right)  \right\rangle dW_{t}^{k}+\sum_{k\in
K}\left\Vert \sigma_{k}\cdot\nabla u_{m}^{\epsilon}\left(  t\right)
+R_{m,k}^{\epsilon}\left(  t\right)  \right\Vert _{L^{2}}^{2}dt
\end{align*}
where we have used the fact that, being $\operatorname{div}\sigma_{k}=0$,
\[
\left\langle \sigma_{k}\cdot\nabla u_{m}^{\epsilon}\left(  t\right)
,u_{m}^{\epsilon}\left(  t\right)  \right\rangle =0.
\]
For every $\delta>0$ we have $2ab\leq\delta a^{2}+\delta^{-1}b^{2}$, hence%
\begin{align*}
\sum_{k\in K}\left\Vert \sigma_{k}\cdot\nabla u_{m}^{\epsilon}\left(
t\right)  +R_{m,k}^{\epsilon}\left(  t\right)  \right\Vert _{L^{2}}^{2} &
\leq\left(  1+\delta\right)  \sum_{k\in K}\left\Vert \sigma_{k}\cdot\nabla
u_{m}^{\epsilon}\left(  t\right)  \right\Vert _{L^{2}}^{2}\\
&  +\left(  1+\delta^{-1}\right)  \sum_{k\in K}\left\Vert R_{m,k}^{\epsilon
}\left(  t\right)  \right\Vert _{L^{2}}^{2}%
\end{align*}%
\[
\leq-\left(  1+\delta\right)  2\eta\left\langle \mathcal{L}u_{m}^{\epsilon
}\left(  t\right)  ,u_{m}^{\epsilon}\left(  t\right)  \right\rangle +\left(
1+\delta^{-1}\right)  \sum_{k\in K}\left\Vert R_{m,k}^{\epsilon}\left(
t\right)  \right\Vert _{L^{2}}^{2}.
\]
Choose $\delta>0$ such that $\left(  1+\delta\right)  2\eta=2-\zeta$ for some
$\zeta>0$. We get%
\begin{align*}
d\left\Vert u_{m}^{\epsilon}\left(  t\right)  \right\Vert _{L^{2}}^{2} &
\leq\zeta\left\langle \mathcal{L}u_{m}^{\epsilon}\left(  t\right)
,u_{m}^{\epsilon}\left(  t\right)  \right\rangle dt+2\left\langle
\rho_{\epsilon}\ast F_{m}\left(  u\left(  t\right)  \right)  ,u_{m}^{\epsilon
}\left(  t\right)  \right\rangle dt\\
&  +2\sum_{k\in K}\left\langle R_{m,k}^{\epsilon}\left(  t\right)
,u_{m}^{\epsilon}\left(  t\right)  \right\rangle dW_{t}^{k}+\left(
1+\delta^{-1}\right)  \sum_{k\in K}\left\Vert R_{m,k}^{\epsilon}\left(
t\right)  \right\Vert _{L^{2}}^{2}dt.
\end{align*}

One has
\[
2\left\langle \rho_{\epsilon}\ast F_{m}\left(  u\left(  t\right)  \right)
,u_{m}^{\epsilon}\left(  t\right)  \right\rangle \leq\left\Vert \rho
_{\epsilon}\ast F_{m}\left(  u\left(  t\right)  \right)  \right\Vert _{L^{2}%
}^{2}+\left\Vert u_{m}^{\epsilon}\left(  t\right)  \right\Vert _{L^{2}}^{2}%
\]
and
\begin{align*}
\left\Vert \rho_{\epsilon}\ast F_{m}\left(  u\left(  t\right)  \right)
\right\Vert _{L^{2}}^{2}  &  \leq\left\Vert F_{m}\left(  u\left(  t\right)
\right)  \right\Vert _{L^{2}}^{2}\leq\left\Vert F_{m}\left(  u\left(
t\right)  \right)  \right\Vert _{\infty}\left\Vert F_{m}\left(  u\left(
t\right)  \right)  \right\Vert _{L^{1}}\\
&  \leq C\left\Vert F_{m}\left(  u\left(  t\right)  \right)  \right\Vert
_{L^{1}}%
\end{align*}
for some deterministic constant $C$, since $0\leq u_{m}\left(  t,x\right)
\leq U$ a.s. for every $m$; moreover each term of $F_{m}\left(  u\left(
t\right)  \right)  $ has $L^{1}$-norm bounded by $U$: indeed a.s.%
\[
\int u_{k}\left(  t,x\right)  u_{h}\left(  t,x\right)  dx\leq U.
\]
We have found%
\begin{align*}
&  d\left\Vert u_{m}^{\epsilon}\left(  t\right)  \right\Vert _{L^{2}}%
^{2}+\zeta\int\left\vert \nabla u_{m}^{\epsilon}\left(  t,x\right)
\right\vert ^{2}dxdt\\
&  \leq Cdt+\left\Vert u_{m}^{\epsilon}\left(  t\right)  \right\Vert _{L^{2}%
}^{2}dt+2\sum_{k\in K}\left\langle R_{m,k}^{\epsilon}\left(  t\right)
,u_{m}^{\epsilon}\left(  t\right)  \right\rangle dW_{t}^{k}+\left(
1+\delta^{-1}\right)  \sum_{k\in K}\left\Vert R_{m,k}^{\epsilon}\left(
t\right)  \right\Vert _{L^{2}}^{2}dt.
\end{align*}
for some constant $C>0$.

\subsubsection{Commutator estimate}

Let us prove:

\begin{lemma}
\label{lemma commutator}%
\[
\sum_{k\in K}\left\Vert R_{m,k}^{\epsilon}\left(  t\right)  \right\Vert _{L^{2}}^{2}\leq
C_{Q}\int\left\vert u_{m}\left(  t,x\right)  \right\vert ^{2}dx.
\]
for a suitable constant $C_{Q}>0$.
\end{lemma}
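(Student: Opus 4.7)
The plan is to carry out the classical DiPerna--Lions commutator computation. Using the convention introduced in the preparation (which itself uses $\operatorname{div}\sigma_{k}=0$ to move the derivative off $u_m$) to write
\[
\bigl[\rho_{\epsilon}\ast(\sigma_{k}\cdot\nabla u_{m}(t))\bigr](x)=\int\nabla_{x}\rho_{\epsilon}(x-y)\cdot\sigma_{k}(y)\,u_{m}(t,y)\,dy,
\]
and differentiating under the integral for $\sigma_{k}(x)\cdot\nabla u_{m}^{\epsilon}(t,x)=\int\nabla_{x}\rho_{\epsilon}(x-y)\cdot\sigma_{k}(x)\,u_{m}(t,y)\,dy$, the two expressions become structurally identical apart from where $\sigma_k$ is evaluated. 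Subtracting gives the key identity
\[
R_{m,k}^{\epsilon}(t,x)=\int\nabla_{x}\rho_{\epsilon}(x-y)\cdot\bigl[\sigma_{k}(y)-\sigma_{k}(x)\bigr]u_{m}(t,y)\,dy,
\]
in which the Lipschitz increment of $\sigma_{k}$ supplies an extra factor $|x-y|$ that will absorb the singular scaling of $\nabla\rho_{\epsilon}$.

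Next I would estimate pointwise. Since $\sigma_{k}\in C_{b}^{\infty}$ we have $|\sigma_{k}(y)-\sigma_{k}(x)|\leq\|\nabla\sigma_{k}\|_{\infty}|x-y|$, so
\[
|R_{m,k}^{\epsilon}(t,x)|\leq\|\nabla\sigma_{k}\|_{\infty}\,\bigl(K_{\epsilon}\ast|u_{m}(t,\cdot)|\bigr)(x),
\]
where $K_{\epsilon}(z):=|z|\,|\nabla\rho_{\epsilon}(z)|$. A change of variable $w=z/\epsilon$ shows
\[
\|K_{\epsilon}\|_{L^{1}}=\int|w|\,|\nabla\rho(w)|\,dw<\infty
\]
uniformly in $\epsilon$: the factor $|z|\sim\epsilon$ on the support of $\nabla\rho_{\epsilon}$ is exactly what cancels the $\epsilon^{-d-1}$ blow-up of $|\nabla\rho_{\epsilon}|$.

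Finally, Young's convolution inequality gives $\|R_{m,k}^{\epsilon}(t)\|_{L^{2}}\leq\|\nabla\sigma_{k}\|_{\infty}\,\|K_{\epsilon}\|_{L^{1}}\,\|u_{m}(t)\|_{L^{2}}$. Squaring and summing over the finite set $K$ yields the stated bound with
\[
C_{Q}:=\sum_{k\in K}\|\nabla\sigma_{k}\|_{\infty}^{2}\Bigl(\int|w|\,|\nabla\rho(w)|\,dw\Bigr)^{2}.
\]
There is no serious obstacle here; the only subtlety is to recognize that one must trade a derivative of $u_{m}$ for a Lipschitz increment of $\sigma_{k}$ by transferring the gradient onto the mollifier, which is exactly what the representation above does. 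The divergence-free assumption $\operatorname{div}\sigma_{k}=0$ is used only once, at the very start, to legitimize the definition of $\rho_{\epsilon}\ast(\sigma_{k}\cdot\nabla u_{m})$ without requiring $u_{m}$ to be differentiable.
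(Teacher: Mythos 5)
Your proof is correct. It shares the paper's skeleton — the same commutator representation
\[
R_{m,k}^{\epsilon}(t,x)=\int\nabla_{x}\rho_{\epsilon}(x-y)\cdot\bigl(\sigma_{k}(y)-\sigma_{k}(x)\bigr)u_{m}(t,y)\,dy,
\]
the gain of a factor $|x-y|$ to neutralize the $\epsilon^{-1}$ in $\nabla\rho_{\epsilon}$, and Young's inequality for convolutions at the end — but it diverges in how that factor is extracted. You apply the Lipschitz bound $|\sigma_{k}(y)-\sigma_{k}(x)|\leq\|\nabla\sigma_{k}\|_{\infty}|x-y|$ separately for each $k$ and conclude pointwise before summing; this is legitimate here because $K$ is finite and each $\sigma_{k}\in C_{b}^{\infty}$, and it makes the proof shorter. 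The paper instead expands $\sum_{k}\|R_{m,k}^{\epsilon}\|_{L^{2}}^{2}$ as a double integral and bounds the summed tensor product $\bigl\Vert\sum_{k}(\sigma_{k}(y)-\sigma_{k}(x))\otimes(\sigma_{k}(y')-\sigma_{k}(x))\bigr\Vert\leq C_{Q}|x-y|\,|x-y'|$ by identifying it with the second-order difference $Q(y,y')-Q(y,x)-Q(x,y')+Q(x,x)$ and Taylor-expanding $Q$ twice. What that buys is a constant controlled solely by two derivatives of the covariance $Q$ near the diagonal rather than by $\sum_{k}\|\nabla\sigma_{k}\|_{\infty}^{2}$ — the formulation that survives when $K$ is infinite and only $Q$ is assumed regular. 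For the finite-$K$ setting of this lemma your more elementary route is perfectly adequate and loses nothing.
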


\begin{proof}
Collecting the definitions, we have%
\begin{align*}
R_{m,k}^{\epsilon}\left(  t\right)   &  =\int\nabla_{x}\rho_{\epsilon}\left(
x-y\right)  \cdot\sigma_{k}\left(  y\right)  u_{m}\left(  t,y\right)
dy-\sigma_{k}\left(  x\right)  \cdot\nabla\int\rho_{\epsilon}\left(
x-y\right)  u_{m}\left(  t,y\right)  dy\\
&  =\int\nabla_{x}\rho_{\epsilon}\left(  x-y\right)  \cdot\left(  \sigma
_{k}\left(  y\right)  -\sigma_{k}\left(  x\right)  \right)  u_{m}\left(
t,y\right)  dy.
\end{align*}
One has%
\begin{align*}
\sum_{k\in K}\left\Vert R_{m,k}^{\epsilon}\left(  t\right)  \right\Vert
_{L^{2}}^{2}  &  =\sum_{k\in K}\iiint\nabla_{x}\rho_{\epsilon}\left(
x-y\right)  \cdot\left(  \sigma_{k}\left(  y\right)  -\sigma_{k}\left(
x\right)  \right)  u_{m}\left(  t,y\right)\\
&  \cdot\nabla_{x}\rho_{\epsilon}\left(  x-y^{\prime}\right)  \cdot\left(
\sigma_{k}\left(  y^{\prime}\right)  -\sigma_{k}\left(  x\right)  \right)
u_{m}\left(  t,y^{\prime}\right)  dydy^{\prime}dx
\end{align*}%
\begin{align*}
&  \leq U^{2}\iiint\left\Vert \sum_{k\in K}\left(  \sigma_{k}\left(
y\right)  -\sigma_{k}\left(  x\right)  \right)  \otimes\left(  \sigma
_{k}\left(  y^{\prime}\right)  -\sigma_{k}\left(  x\right)  \right)
\right\Vert \\
&  \cdot \left\vert \nabla_{x}\rho_{\epsilon}\left(  x-y\right)  \right\vert
\left\vert \nabla_{x}\rho_{\epsilon}\left(  x-y^{\prime}\right)  \right\vert
\left\vert u_{m}\left(  t,y\right)  \right\vert \left\vert u_{m}\left(
t,y^{\prime}\right)  \right\vert dydy^{\prime}dx
\end{align*}
where $\left\Vert \cdot\right\Vert $ here denotes the Euclidean matrix norm.
We show below that
\begin{equation}
\left\Vert \sum_{k\in K}\left(  \sigma_{k}\left(  y\right)  -\sigma_{k}\left(
x\right)  \right)  \otimes\left(  \sigma_{k}\left(  y^{\prime}\right)
-\sigma_{k}\left(  x\right)  \right)  \right\Vert \leq C_{Q}\left\vert
x-y\right\vert \left\vert x-y^{\prime}\right\vert \label{property of Q}%
\end{equation}
for some constant $C_{Q}$. Recall the support property of $\rho$; it implies
that $\rho_{\epsilon}$ has support in the ball $B\left(  0,\epsilon\right)  $,
hence the previous expression is bounded by%
\begin{align*}
&  \leq C_{Q}\epsilon^{-2}\int dx\int_{B\left(  x,\epsilon\right)  }%
dy\int_{B\left(  x,\epsilon\right)  }dy^{\prime}\epsilon^{-2d}\left\vert
\left(  \nabla\rho\right)  \left(  \frac{x-y}{\epsilon}\right)  \right\vert
\left\vert \left(  \nabla\rho\right)  \left(  \frac{x-y^{\prime}}{\epsilon
}\right)  \right\vert \\
&  \cdot\left\vert x-y\right\vert \left\vert x-y^{\prime}\right\vert
\left\vert u_{m}\left(  t,y\right)  \right\vert \left\vert u_{m}\left(
t,y^{\prime}\right)  \right\vert
\end{align*}%
\begin{align*}
&  \leq C_{Q}\iiint dxdydy^{\prime}\epsilon^{-2d}\left\vert \left(
\nabla\rho\right)  \left(  \frac{x-y}{\epsilon}\right)  \right\vert \left\vert
\left(  \nabla\rho\right)  \left(  \frac{x-y^{\prime}}{\epsilon}\right)
\right\vert \left\vert u_{m}\left(  t,y\right)  \right\vert \left\vert
u_{m}\left(  t,y^{\prime}\right)  \right\vert \\
&  =C_{Q}\int\left(  \int\epsilon^{-d}\left\vert \left(  \nabla\rho\right)
\left(  \frac{x-y}{\epsilon}\right)  \right\vert \left\vert u_{m}\left(
t,y\right)  \right\vert dy\right)  ^{2}dx.
\end{align*}
By Young's inequality for convolutions, this is bounded by
\[
\leq C_{Q}\int\left\vert u_{m}\left(  t,x\right)  \right\vert ^{2}dx.
\]

It remains to prove (\ref{property of Q}). It is equivalent to%
\[
\left\Vert Q\left(  y,y^{\prime}\right)  -Q\left(  y,x\right)  -Q\left(
x,y^{\prime}\right)  +Q\left(  x,x\right)  \right\Vert \leq C_{Q}\left\vert
x-y\right\vert \left\vert x-y^{\prime}\right\vert .
\]
It is sufficient to prove a similar estimate componentwise, for the
matrix-value function $Q$. Now%
\begin{align*}
&  Q_{ij}\left(  y,y^{\prime}\right)  -Q_{ij}\left(  y,x\right) \\
&  =\int_{0}^{1}\nabla_2 Q_{ij}\left( y, \alpha\left(  y-y^{\prime}\right)
+\left(  1-\alpha\right)  \left(  y-x\right)  \right)  d\alpha\cdot\left(
x-y^{\prime}\right)
\end{align*}%
\[
Q_{ij}\left(  x,y^{\prime}\right)  -Q_{ij}\left(  x,x\right)  =\int_{0}%
^{1}\nabla_2 Q_{ij}\left( x, \alpha\left(  x-y^{\prime}\right)  \right)
d\alpha\cdot\left(  x-y^{\prime}\right)
\]%
\begin{align*}
&  \partial_{h}Q_{ij}\left(  y,\alpha\left(  y-y^{\prime}\right)  +\left(
1-\alpha\right)  \left(  y-x\right)  \right)  -\partial_{h}Q_{ij}\left(
x,\alpha\left(  x-y^{\prime}\right)  \right) \\
&  =\int_{0}^{1}\nabla_2\partial_{h}Q_{ij}\left( y, \beta\left(  \alpha\left(
y-y^{\prime}\right)  +\left(  1-\alpha\right)  \left(  y-x\right)  \right)
+\left(  1-\beta\right)  \left(  \alpha\left(  x-y^{\prime}\right)  \right)
\right)  d\beta\cdot\left(  y-x\right)  \\
&+\int_0^1\nabla_1\partial_h Q_{ij}(\beta y+(1-\beta)x, \alpha(x-y'))d\beta\cdot(y-x).
\end{align*}
Collecting these identities, we get the required bound.
\end{proof}

\subsubsection{Conclusion}

From Lemma \ref{lemma commutator} and the a priori bounds on $u_{m}$ we get
\[
\sum_{k\in K}\left\Vert R_{m,k}^{\epsilon}\left(  t\right)  \right\Vert
_{L^{2}}^{2}\leq C_{Q}U\int\left\vert u_{m}\left(  t,x\right)  \right\vert
dx\leq C_{Q}U.
\]
Hence%

\[
d\left\Vert u_{m}^{\epsilon}\left(  t\right)  \right\Vert _{L^{2}}^{2}%
+\zeta\int\left\vert \nabla u_{m}^{\epsilon}\left(  t,x\right)  \right\vert
^{2}dxdt\leq C^{\prime}dt+2\sum_{k\in K}\left\langle R_{m,k}^{\epsilon}\left(
t\right)  ,u_{m}^{\epsilon}\left(  t\right)  \right\rangle dW_{t}^{k}%
\]
for a new constant $C^{\prime}>0$. It follows%
\begin{align*}
\zeta\mathbb{E}\int_{0}^{T}\int\left\vert \nabla u_{m}^{\epsilon}\left(
t,x\right)  \right\vert ^{2}dxdt  &  \leq C^{\prime}T+2\left(\sum_{k\in K}%
\mathbb{E}\int_{0}^{T}\left\langle R_{m,k}^{\epsilon}\left(  t\right)
,u_{m}^{\epsilon}\left(  t\right)  \right\rangle ^{2}dt\right)^{1/2}\\
&  \leq C^{\prime}T+2\left(\sum_{k\in K}\mathbb{E}\int_{0}^{T}\left\Vert
R_{m,k}^{\epsilon}\left(  t\right)  \right\Vert _{L^{2}}^{2}\left\Vert
u_{m}^{\epsilon}\left(  t\right)  \right\Vert _{L^{2}}^{2}dt\right)^{1/2}.
\end{align*}
As above, $\left\Vert u_{m}^{\epsilon}\left(  t\right)  \right\Vert _{L^{2}%
}^{2}\leq U$, and $\sum_{k\in K}\left\Vert R_{m,k}^{\epsilon}\left(  t\right)
\right\Vert _{L^{2}}^{2}\leq C_{Q}U$, hence%
\[
\zeta\mathbb{E}\int_{0}^{T}\int\left\vert \nabla u_{m}^{\epsilon}\left(
t,x\right)  \right\vert ^{2}dxdt\leq C^{\prime}T+2C_{Q}^{1/2}UT^{1/2}.
\]
The proof of the lemma is complete.

\subsection{Proof of Corollary \ref{Coroll uniqueness}}

Let $u=\left(  u_{1},...,u_{M}\right)  $, $u^{\prime}=\left(  u_{1}^{\prime
},...,u_{M}^{\prime}\right)  $ be two weak solutions. Set $v\left(  t\right)
=u\left(  t\right)  -u^{\prime}\left(  t\right)  $, $v_{m}\left(  t\right)
=u_{m}\left(  t\right)  -u_{m}^{\prime}\left(  t\right)  $. The regularity of
$v$ is sufficient to apply It\^{o} formula (cf. \cite{KrylovRozovskii},
\cite{Pardoux}, \cite{PrevRoeckner}):%
\begin{align*}
d\left\Vert v_{m}\left(  t\right)  \right\Vert _{L^{2}}^{2} &  =2\left\langle
\mathcal{L}v_{m}\left(  t\right)  ,v_{m}\left(  t\right)  \right\rangle
dt+2\left\langle F_{m}\left(  u\left(  t\right)  \right)  -F_{m}\left(
u^{\prime}\left(  t\right)  \right)  ,v_{m}\left(  t\right)  \right\rangle
dt\\
&  +2\sum_{k\in K}\left\langle \sigma_{k}\cdot\nabla v_{m}\left(  t\right)
,v_{m}\left(  t\right)  \right\rangle dW_{t}^{k}+\sum_{k\in K}\left\Vert
\sigma_{k}\cdot\nabla v_{m}\left(  t\right)  \right\Vert _{L^{2}}^{2}dt.
\end{align*}
Since $\left\langle \sigma_{k}\cdot\nabla v_{m}\left(  t\right)  ,v_{m}\left(
t\right)  \right\rangle =0$ and $\sum_{k\in K}\left\Vert \sigma_{k}\cdot\nabla
v_{m}\left(  t\right)  \right\Vert _{L^{2}}^{2}$ is bounded above by
$-2\left\langle \mathcal{L}v_{m}\left(  t\right)  ,v_{m}\left(  t\right)
\right\rangle $, we get%
\[
d\left\Vert v_{m}\left(  t\right)  \right\Vert _{L^{2}}^{2}\le 2\left\langle
F_{m}\left(  u\left(  t\right)  \right)  -F_{m}\left(  u^{\prime}\left(
t\right)  \right)  ,v_{m}\left(  t\right)  \right\rangle dt.
\]
Now%
\[
F_{m}\left(  u\right)  -F_{m}\left(  u^{\prime}\right)  =\sum_{n=1}%
^{m-1}\left(  u_{n}u_{m-n}-u_{n}^{\prime}u_{m-n}^{\prime}\right)  -2\left(
u_{m}\sum_{n=1}^{M}u_{n}-u_{m}^{\prime}\sum_{n=1}^{M}u_{n}^{\prime}\right)
\]
and each term of the form $u_{h}u_{k}-u_{h}^{\prime}u_{k}^{\prime}$ can be
estimated as%
\begin{align*}
\left\vert u_{h}u_{k}-u_{h}^{\prime}u_{k}^{\prime}\right\vert  &
\leq\left\vert u_{h}u_{k}-u_{h}u_{k}^{\prime}\right\vert +\left\vert
u_{h}u_{k}^{\prime}-u_{h}^{\prime}u_{k}^{\prime}\right\vert \\
&  \leq U\left\vert v_{k}\right\vert +U\left\vert v_{h}\right\vert
\end{align*}
hence%
\[
\left\vert F_{m}\left(  u\right)  -F_{m}\left(  u^{\prime}\right)  \right\vert
\leq C\sum_{n=1}^{M}\left\vert v_{n}\right\vert
\]
where $C$ depends on $U$ and $M$. It follows%
\begin{align*}
\frac{d}{dt}\left\Vert v_{m}\left(  t\right)  \right\Vert _{L^{2}}^{2} &
\leq2C\sum_{n=1}^{M}\int\left\vert v_{n}\left(  t,x\right)  \right\vert
\left\vert v_{m}\left(  t,x\right)  \right\vert dx\\
&  \le 2C\sum_{n=1}^{M}\left\Vert v_{n}\left(  t\right)  \right\Vert _{L^{2}}%
^{2}+2CM\left\Vert v_{m}\left(  t\right)  \right\Vert _{L^{2}}^{2}.
\end{align*}
Summing over $m$ and applying Gronwall lemma, we deduce $\left\Vert
v_{m}\left(  t\right)  \right\Vert _{L^{2}}^{2}=0$ for every $m$ and $t$,
which is pathwise uniqueness.

\section{Relative compactness of the empirical measure}
\label{tightness}
In this section, we show the tightness of the sequence of laws of $\{\mu ^{N,m}\}_{m\le M}$ taking values in $\cD_T(\cM_{+,1})^M$. 

\subsection{A general compactness criterion}

Let $\mathcal{M}_{+,1}\left(  \mathbb{R}^{d}\right)  $ be the set of positive
Borel measures on $\mathbb{R}^{d}$ with mass $\leq1$. Recall that, given
$R>0$, the set $\mathcal{K}_{R}\subset\mathcal{M}_{+,1}\left(  \mathbb{R}%
^{d}\right)  $ defined as%
\begin{align*}
\mathcal{K}_{R}=\Big\{  \mu\in\mathcal{M}_{+,1}\left(  \mathbb{R}^{d}\right)
:\int_{\mathbb{R}^{d}}\left\vert x\right\vert \mu\left(  dx\right)  \leq
R\Big\}
\end{align*}
is relatively compact in $\mathcal{M}_{+,1}\left(  \mathbb{R}^{d}\right)  $
endowed with the topology of weak convergence of measures.

The weak convergence on $\mathcal{M}_{+,1}\left(  \mathbb{R}^{d}\right)  $ can
be metrized in the following way. For every compact set $K\subset
\mathbb{R}^{d}$, the space $C\left(  K\right)  $ is separable; let $\left(
f_{n}^{K}\right)  _{n\in\mathbb{N}}$ be a dense sequence in $C\left(
K\right)  $ and define the function $\delta_{K}:\mathcal{M}_{+,1}\left(
K\right)  ^{2}\rightarrow\lbrack0,\infty)$ as%
\begin{align*}
\delta_{K}\left(  \mu,\nu\right)  =\sum_{n=1}^{\infty}2^{-n}\left(  \left\vert
\left\langle \mu,f_{n}^{K}\right\rangle -\left\langle \nu,f_{n}^{K}%
\right\rangle \right\vert \wedge1\right)  .
\end{align*}
This is a metric on $\mathcal{M}_{+,1}\left(  K\right)  $ and the metric space
$\left(  \mathcal{M}_{+,1}\left(  K\right)  ,d_{K}\right)  $ is complete and
separable; convergence in this metric is weak convergence of measures. Taking
a sequence of compact sets $K_{m}$ with $\cup_{m}K_{m}=\mathbb{R}^{d}$ and
proceeding in a similar way we may define a metric on $\mathcal{M}%
_{+,1}\left(  \mathbb{R}^{d}\right)  $. Rearranging the double procedure in a
single one, we may claim that there exists \ a sequence $\left(  f_{n}\right)
_{n\in\mathbb{N}}$, dense in $C\left(  K\right)  $ for every compact set
$K\subset\mathbb{R}^{d}$, such that
\begin{align*}
\delta\left(  \mu,\nu\right)  =\sum_{n=1}^{\infty}2^{-n}\left(  \left\vert
\left\langle \mu,f_{n}\right\rangle -\left\langle \nu,f_{n}\right\rangle
\right\vert \wedge1\right)
\end{align*}
is a metric on $\mathcal{M}_{+,1}\left(  \mathbb{R}^{d}\right)  $ and the
metric space $\left(  \mathcal{M}_{+,1}\left(  \mathbb{R}^{d}\right)
,\delta\right)  $ is complete and separable; convergence in this metric is
weak convergence of measures. Finally, we may take the sequence $\left(
f_{n}\right)  _{n\in\mathbb{N}}$, in $C_{c}^{\infty}\left(  \mathbb{R}%
^{d}\right)  $, by revising the previous construction from the beginning and
using the density of $C_{c}^{\infty}\left(  \mathbb{R}^{d}\right)  $ in
$C_{c}\left(  \mathbb{R}^{d}\right)  $.

Given $T>0$, consider the space of c\`adl\`ag functions
\begin{align*}
\mu_{\cdot}:\left[  0,T\right]  \rightarrow\mathcal{M}_{+,1}\left(
\mathbb{R}^{d}\right)
\end{align*}
where $\left(  \mathcal{M}_{+,1}\left(  \mathbb{R}^{d}\right)  ,\delta\right)
$ is considered as a metric space (hence continuity and limits of $t\mapsto
\mu_{t}$ are understood in this metric). Denote it by $\mathcal{D}\left(
\left[  0,T\right]  ;\mathcal{M}_{+,1}\left(  \mathbb{R}^{d}\right)  \right)
$ or more shortly as $\mathcal{D}_{T}\left(  \mathcal{M}_{+,1}\right)  $ and
endow it by the Skorohod topology, not recalled here, but denoted by $d$
below (cf. \cite[Ch. 3, Eq. (5.2)]{EK}).

Criteria of compacteness in $\left(  \mathcal{D}_{T}\left(  \mathcal{M}%
_{+,1}\right)  ,d\right)  $ are usually expressed by means of a modified
modulus of continuity, to account of jumps. When the jumps are very small (as
in our case), the classical modulus of continuity is sufficient, defined as
($\mu_{\cdot}\in\mathcal{D}_{T}\left(  \mathcal{M}_{+,1}\right)  $)%
\begin{align*}
\omega_{\gamma}\left(  \mu_{\cdot}\right)  =\sup_{\substack{s,t\in\left[
0,T\right]  \\\left\vert t-s\right\vert \leq\gamma}}\delta\left(  \mu_{s}%
,\mu_{t}\right)  .
\end{align*}

A sufficient condition for a (deterministic) sequence $\left\{  \mu_{\cdot}^{n}\right\}
\subset\mathcal{D}_{T}\left(  \mathcal{M}_{+,1}\right)  $ to be relatively
compact is:
\begin{proposition}\label{ppn:seq-cpt}
If (i). the family%
\begin{align*}
\left\{  \mu_{t}^{n};n\in\mathbb{N},t\in\left[  0,T\right]  \right\}
\subset\mathcal{M}_{+,1}\left(  \mathbb{R}^{d}\right)
\end{align*}
is relatively compact (see above a sufficient condition); and (ii).
\begin{align*}
\lim_{\gamma\rightarrow0}\sup_{n}\omega_{\gamma}\left(  \mu_{\cdot}%
^{n}\right)  =0,
\end{align*}
then $\left\{  \mu_{\cdot}^{n}\right\}  $ is relatively compact in
$\mathcal{D}_{T}\left(  \mathcal{M}_{+,1}\right)  $.
\end{proposition}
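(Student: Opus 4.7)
The plan is to reduce the statement to the classical Arzel\`a--Ascoli theorem for continuous functions into a compact metric space, after which relative compactness in the Skorohod topology follows from its domination by the uniform topology.

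First, by hypothesis (i), the closure $\cK$ of $\{\mu^n_t : n\in\N,\, t\in[0,T]\}$ is compact in the metric space $(\cM_{+,1}(\R^d),\delta)$. I would then observe that each trajectory $\mu^n_\cdot$ is in fact continuous, not merely c\`adl\`ag: if $\mu^n_\cdot$ admitted a jump of magnitude $a>0$ at some time $t_0\in[0,T]$, then $\omega_\gamma(\mu^n_\cdot)\ge a$ for every $\gamma>0$, contradicting hypothesis (ii). Hence the entire family sits in $C([0,T];\cK)$.

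Next, I apply the classical Arzel\`a--Ascoli theorem to the family $\{\mu^n_\cdot\}_{n\in\N}\subset C([0,T];\cK)$ endowed with the uniform metric $d_\infty(\mu_\cdot,\nu_\cdot):=\sup_{t\in[0,T]}\delta(\mu_t,\nu_t)$. Equicontinuity is exactly the content of hypothesis (ii), and relative compactness of pointwise values follows since $\cK$ is compact. Thus $\{\mu^n_\cdot\}$ is relatively compact in $(C([0,T];\cK),d_\infty)$; every subsequence admits a further subsequence converging uniformly to some limit $\mu^\infty_\cdot\in C([0,T];\cK)\subset \cD_T(\cM_{+,1})$.

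Finally, I would note that uniform convergence is stronger than Skorohod convergence: taking the identity time-change $\lambda(t)=t$ in the definition of the Skorohod metric $d$ yields $d(\mu_\cdot,\nu_\cdot)\le d_\infty(\mu_\cdot,\nu_\cdot)$, so the uniformly convergent subsequence also converges in $(\cD_T(\cM_{+,1}),d)$, giving the claimed relative compactness. There is no serious obstacle here; the only subtlety worth checking is that hypothesis (i) is interpreted in the weak topology rather than, say, in total variation, which is automatic since $\delta$ was explicitly chosen to metrize weak convergence of subprobability measures.
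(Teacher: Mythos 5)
Your proof is correct, and it takes a genuinely different route from the one the paper suggests. The paper states this proposition without a written proof; the preceding remark that the classical modulus of continuity suffices because jumps are small points to the intended argument, namely to invoke the standard relative-compactness criterion for $\cD_T$ expressed via the modified modulus $w'_\gamma$ (which accommodates jumps) together with the elementary inequality $w'_\gamma \le \omega_{2\gamma}$. Your argument is more elementary and self-contained: you first observe that hypothesis (ii) rules out jumps altogether (a jump of size $a>0$ in $\mu^n_\cdot$ forces $\omega_\gamma(\mu^n_\cdot)\ge a/2$ for every $\gamma>0$, so $\sup_n\omega_\gamma$ could not tend to zero), hence the entire family is confined to $C([0,T];\cK)$ with $\cK$ the compact closure supplied by (i); Arzel\`a--Ascoli then gives relative compactness in the uniform metric, and since uniform convergence dominates Skorohod convergence (take the identity time-change in the Skorohod distance), relative compactness in $\cD_T(\cM_{+,1})$ follows. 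Both routes are valid. Yours avoids the $\cD$-space criterion as a black box and makes transparent that (i)--(ii), as stated with the \emph{classical} modulus, in fact confine the family to continuous paths; the route via $w'$ is the one that generalizes directly to families with genuine jumps, which is closer to how the criterion is eventually exploited in its probabilistic form downstream.
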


Let now $\left\{  \mu_{\cdot}^{n}\right\}  $ be a family of {\it{random}} elements of
$\mathcal{D}_{T}\left(  \mathcal{M}_{+,1}\right)  $ and let $\left\{
\mathbb{P}^{n}\right\}  $ be their laws. This sequence of laws is weakly relatively
compact if it is tight, namely if given $\epsilon>0$ there exists a compact
set $\mathcal{K}_{\epsilon}\subset\mathcal{D}_{T}\left(  \mathcal{M}%
_{+,1}\right)  $ such that
\begin{align*}
\mathbb{P}^{n}\left(  \mathcal{K}_{\epsilon}\right)  =\mathbb{P}\left(
\mu_{\cdot}^{n}\in\mathcal{K}_{\epsilon}\right)  \geq1-\epsilon.
\end{align*}
Proposition \ref{ppn:seq-cpt} yields the following practical sufficient condition:

\begin{proposition}\label{suff-rel-cpt}
If (i). there exists $C_{1}>0$ such that
\begin{align*}
\mathbb{E}\int_{\mathbb{R}^{d}}\left\vert x\right\vert \mu_{t}^{n}\left(
dx\right)  \leq C_{1}%
\end{align*}
for all $n$ and $t\in[0,T]$; 
(ii). and there exists $\beta>0$ such that, for every $\phi\in C_{c}^{\infty
}\left(  \mathbb{R}^{d}\right)  $, there exists $C_{\phi}>0$ such that
\begin{align*}
\mathbb{E}\left[  \left\vert \left\langle \mu_{t}^{n},\phi\right\rangle
-\left\langle \mu_{s}^{n},\phi\right\rangle \right\vert \right]  \leq C_{\phi
}\left\vert t-s\right\vert ^{\beta}%
\end{align*}
for all $t,s\in\left[  0,T\right]  $, 
then the sequence $\left\{  \mathbb{P}^{n}\right\}  $ is relatively compact.
\end{proposition}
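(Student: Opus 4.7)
Proof plan. The strategy is to verify tightness of $\{\P^n\}$ on $\cD_T(\cM_{+,1})$ via Prokhorov's theorem and the deterministic compactness characterization of Proposition \ref{ppn:seq-cpt}. Given $\epsilon>0$, the goal is to exhibit a relatively compact set $\cK_\epsilon \subset \cD_T(\cM_{+,1})$ with $\P^n(\cK_\epsilon) \ge 1-\epsilon$ for every $n$. By Proposition \ref{ppn:seq-cpt}, such a set is determined by a spatial truncation together with a uniform modulus-of-continuity control.

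For the spatial part, the sub-level set $\cK_R := \{\mu \in \cM_{+,1} : \int|x|\,\mu(dx) \le R\}$ is closed by lower semicontinuity of $\mu\mapsto\int|x|\,\mu(dx)$, and relatively compact in $\cM_{+,1}$ (no mass escapes to infinity under a uniform first-moment bound, by Prokhorov). Condition (i) together with Markov's inequality gives $\P(\int|x|\,\mu_t^n(dx) > R) \le C_1/R$ for each fixed $n,t$, which is marginal tightness at every $t$. The promotion to uniform-in-$t$ control $\{\mu_t^n \in \cK_R$ for all $t\in[0,T]\}$ with high probability will come from pairing the pointwise estimate on a fine deterministic grid with the temporal modulus estimate produced below (oscillations of $\int|x|\,\mu_t^n(dx)$ between grid points being controlled by applying (ii) to $C_c^\infty$-cutoffs approximating $|x|\wedge R'$).

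For the temporal part, recall that $\delta$ on $\cM_{+,1}$ is constructed from a countable family $(f_k)\subset C_c^\infty(\R^d)$ with geometric weights $2^{-k}$, so that for every $K\in\N$
\[
\omega_\gamma(\mu_\cdot) \le \sum_{k=1}^{K} 2^{-k}\, \omega_\gamma^{(k)}(\mu_\cdot) + 2^{-K+1}, \qquad \omega_\gamma^{(k)}(\mu_\cdot) := \sup_{|t-s|\le\gamma}\bigl|\langle \mu_t, f_k\rangle - \langle \mu_s, f_k\rangle\bigr|.
\]
Thus, given $\epsilon'>0$, choosing $K$ with $2^{-K+1}<\epsilon'/2$ reduces the problem to controlling each $\omega_\gamma^{(k)}(\mu_\cdot^n)$ for $k\le K$, uniformly in $n$. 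Condition (ii) with $\phi=f_k$ together with Markov's inequality yields
\[
\P\bigl(|\langle \mu_t^n, f_k\rangle - \langle \mu_s^n, f_k\rangle| > \eta\bigr) \le \eta^{-1}C_{f_k}|t-s|^\beta;
\]
combined with the pointwise bound $|\langle \mu_\cdot^n, f_k\rangle|\le \|f_k\|_\infty$, this furnishes exactly the deterministic-time input required by Aldous' tightness criterion for the real-valued projection $\langle \mu_\cdot^n, f_k\rangle$ in $\cD_T(\R)$. Aldous' criterion then supplies, for each $\eta>0$, a $\gamma>0$ such that $\sup_n \P(\omega_\gamma^{(k)}(\mu_\cdot^n)>\eta) \le \epsilon/(2K)$.

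The main obstacle is the weak form of the regularity in (ii): with only an $L^1$-in-expectation bound of exponent $\beta$ possibly $\le 1$, a direct Kolmogorov--Chentsov chaining argument would demand higher moments in order to produce pathwise H\"older regularity. The Aldous route circumvents this by asking only for control in probability over short intervals, at the price of upgrading the deterministic-time bound to a stopping-time bound; for the bounded semimartingales $\langle \mu_\cdot^n, f_k\rangle$ (whose explicit It\^o decomposition is derived in Section \ref{sec:emp-mea}), this upgrade is routine via optional sampling applied to the drift and martingale parts separately. Assembling the spatial truncation $\cK_R$ (for $R$ large in terms of $\epsilon$) with the finite collection $\{\omega_{\gamma_k}^{(k)}(\mu_\cdot^n)\le \eta_k\}_{k\le K}$ into $\cK_\epsilon$, with exceptional probabilities summing to $\le\epsilon$, completes the tightness argument.
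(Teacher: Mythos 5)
Your overall architecture --- project onto finitely many test functions $f_k$, prove tightness of each real-valued projection in $\cD_T(\R)$ via Aldous, and add a compact-containment bound --- is a standard and workable route, and the paper itself offers no proof of this proposition (it is asserted as a corollary of the deterministic criterion, Proposition \ref{ppn:seq-cpt}). But the step you flag as the ``main obstacle'' and then dismiss as routine is in fact the entire mathematical content, and it cannot be closed from hypotheses (i)--(ii). Aldous' criterion requires control of $\P\left(|\langle \mu^n_{\tau+h},f_k\rangle-\langle \mu^n_{\tau},f_k\rangle|>\eta\right)$ uniformly over \emph{stopping times} $\tau$, whereas (ii) is a first-moment bound at two \emph{deterministic} times with exponent $\beta$ possibly $\le 1$; no chaining or interpolation recovers the former from the latter. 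Indeed, the proposition as literally stated is false: take $T=2$, $U$ uniform on $[0,1]$ and $\mu^n_t:=1_{[U,U+1/n)}(t)\,\delta_0$. Then (i) holds trivially, and a direct computation gives $\E\left|\langle\mu^n_t,\phi\rangle-\langle\mu^n_s,\phi\rangle\right|\le 2|\phi(0)|\,|t-s|$, so (ii) holds with $\beta=1$; yet the projection on any $\phi$ with $\phi(0)=1$ has two unit jumps at distance $1/n$, so no subsequence of the laws is tight in $\cD_T(\R)$, hence none is tight in $\cD_T(\cM_{+,1})$. (A continuous bump $g(n(t-U))\delta_0$ gives the same conclusion, so small jumps alone do not rescue the statement.) Consequently your appeal to ``optional sampling applied to the drift and martingale parts'' is not a routine upgrade but a genuinely stronger hypothesis smuggled in through the semimartingale decomposition of Section \ref{sec:emp-mea}, which is not part of the proposition's assumptions.

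The repair is to strengthen (ii) to a stopping-time form, $\E\left|\langle\mu^n_{(\tau+h)\wedge T},\phi\rangle-\langle\mu^n_{\tau},\phi\rangle\right|\le C_{\phi}h^{\beta}$ uniformly over stopping times $\tau\le T$ and $h\le\gamma$, and then run exactly your Aldous argument; this is essentially what the generator and quadratic-variation computations of Section \ref{tightness} deliver (the martingale increments via optional stopping and the bracket bounds, the coagulation drift with a little extra care since its integrand is not pathwise bounded uniformly in $N$). Alternatively one proves tightness directly for the empirical measures rather than through a general criterion. Two further corrections to your sketch: Aldous' criterion controls the c\`adl\`ag modulus $\omega'_{\gamma}$, not the classical modulus $\omega_{\gamma}=\sup_{|t-s|\le\gamma}$ that you wrote (harmless for Skorohod-tightness, but it means you are not literally feeding Proposition \ref{ppn:seq-cpt}); and the compact-containment condition for $\cD_T$-tightness only requires the marginal bound $\P\left(\int|x|\,\mu^n_t(dx)>R\right)\le C_1/R$ at each fixed $t$ (cf. \cite[Ch.~3, Theorem 7.2]{EK}), so your grid-plus-modulus promotion of (i) to a uniform-in-$t$ statement is unnecessary.
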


\subsection{Application to our case}

Let $\{\mu_{\cdot}^{N,m}\}_{m\le M}$ be the empirical measures defined in \eqref{empirical}. Let us check the conditions of Proposition \ref{suff-rel-cpt} above, for each $\mu_{\cdot}^{N,m}$, $1\le m\le M$. Let us start with (ii).
We have, analogously to \eqref{key-identity}, for $\phi\in C_{c}^{\infty}\left(  \mathbb{R}^{d}\right)  $, $0\le s<t\le T$,

\begin{equation*}
\begin{aligned}
\left\langle \phi(x), \mu^{N,m}_t(dx)\right\rangle & = \left\langle \phi(x), \mu^{N,m}_s(dx)\right\rangle
+\int_s^tdr\frac{\lambda^2}{2N}\sum_{i\in\cN(r)}\Delta \phi(x_i^N(r))1_{\{m_i(r)=m\}}\\
&+\int_s^tdr\frac{1}{2N}\sum_{i\in\cN(r)}\operatorname{div}\left(Q(x_i^N(r),x_i^N(r))\nabla \phi(x_i^N(r))\right)1_{\{m_i(r)=m\}}\\
&+\int_s^tdr \frac{1}{N^2}\sum_{i\neq j\in\cN(r)}\theta^\ep(x_i^N(r)-x_j^N(r))\Big[\frac{m_i(r)}{m_i(r)+m_j(r)}\phi(x_i^N(r))1_{\{m_i(r)+m_j(r)=m\}}\\
&+\frac{m_j(r)}{m_i(r)+m_j(r)}\phi(x_j^N(r))1_{\{m_i(r)+m_j(r)=m\}}
 -\phi(x_i^N(r))1_{\{m_i(r)=m\}}-\phi(x_j^N(r))1_{\{m_j(r)=m\}}\Big]\\
& +\left(M^{1,D,\phi}_t-M^{1,D,\phi}_s\right) + \left(M^{2,D,\phi}_t-M^{2,D,\phi}_s\right) + \left(M_t^{J,\phi} - M_s^{J,\phi}\right).
\end{aligned}
\end{equation*}
Since $N(r)\le N$, we have that
\begin{equation*}
\begin{aligned}
\mathbb{E} \left\vert \left\langle \mu_{t}^{N,m},\phi\right\rangle
-\left\langle \mu_{s}^{N,m},\phi\right\rangle \right\vert   &
\leq C(\lambda)\left\vert t-s\right\vert \left\Vert \phi\right\Vert
_{C^2}\|Q\|_{C^1}\\
&+4\|\phi\|_\infty \E \int_s^tdr \frac{1}{N^2}\sum_{i\neq j\in\cN(r)}\theta^\ep(x_i^N(r)-x_j^N(r)) \\
& +\mathbb{E} \left\vert M^{1,D,\phi}_{t}-M^{1,D,\phi}_{s}\right\vert  +\mathbb{E} \left\vert M^{2,D,\phi}_{t}-M^{2,D,\phi}_{s}\right\vert  +\mathbb{E} \left\vert M^{J,\phi}_{t}-M^{J,\phi}_{s}\right\vert 
\end{aligned}
\end{equation*}
Similarly to \eqref{mg-van-1}, \eqref{mg-van-2}, we have that
\begin{align*}
\left[  \mathbb{E}\left\vert M^{1,D,\phi}_{t}-M^{1,D,\phi}_{s}\right\vert \right]  ^{2}  &
\leq\mathbb{E}\left[  \left\vert M^{1,D,\phi}_{t}-M^{1,D,\phi}_{s}\right\vert ^{2}\right]  \leq C(\lambda) (t-s)N^{-1}\|\phi\|_{C^1}^2
\end{align*}
\begin{align*}
\left[\mathbb{E}  \left\vert M^{2,D,\phi}_{t}-M^{2,D,\phi}_{s}\right\vert \right]  ^{2}  &
\leq\mathbb{E}\left[  \left\vert M^{2,D,\phi}_{t}-M^{2,D,\phi}_{s}\right\vert ^{2}\right]  \leq (t-s)\sum_{k\in K}\|\phi\|^2_{C^1}\|\sigma_k\|^2_\infty
\end{align*}
To deal with 
\[
\E \int_s^tdr \frac{1}{N^2}\sum_{i\neq j\in\cN(r)}\theta^\ep(x_i^N(r)-x_j^N(r)), 
\]
we consider each fixed pair $(i,j)$ of particles, and assume they both are active in $[s,t]$. By Proposition \ref{bdd-density}, $(x_i^N(t),x_j^N(t))$ has a joint density $\bp_t(x_1,x_2)$ satisfying the bound \eqref{joint-HK} with $\ell=2$. Thus, we have
\begin{align*}
&\E \int_s^t\theta^\ep(x_i^N(r)-x_j^N(r))dr =\int_s^t\theta^\ep(x_1-x_2)\bp_r(x_1,x_2)dx_1dx_2\\
&\le C(t-s)\iint_{\R^{2d}}\theta^\ep(x_1-x_2)e^{-\sqrt{|x_1|^2+|x_2|^2}/C}dx_1dx_2 \le C'(t-s).
\end{align*}
Finally, 
\begin{align*}
\left[ \mathbb{E} \left\vert M^{J,\phi}_{t}-M^{J,\phi}_{s}\right\vert \right]  ^{2} 
\leq\mathbb{E}\left[  \left\vert M^{J,\phi}_{t}-M^{J,\phi}_{s}\right\vert ^{2}\right]  &\leq 16\|\phi\|^2_\infty\E \int_s^tdr\frac{1}{N^3}\sum_{i\neq j\in\cN(r)}\theta^\ep\left(x_i^N(r)-x_j^N(r)\right) \\
&\le C'(t-s)N^{-1}\|\phi\|^2_\infty 
\end{align*}
by the previous estimate. Summarizing,%
\begin{align*}
\mathbb{E} \left\vert \left\langle \mu_{t}^{N,q},\phi\right\rangle
-\left\langle \mu_{s}^{N,q},\phi\right\rangle \right\vert  \leq
C\left(\phi,\{\sigma_k\}_{k\in K}, \lambda\right)\left\vert t-s\right\vert ^{1/2}.
\end{align*}
Concerning (i), for any $t\in[0,T]$,
\begin{align*}
\mathbb{E} \int_{\mathbb{R}^{d}}\left\vert x\right\vert \mu_{t}^{N,m}\left(
dx\right)  \leq\frac{1}{N}\mathbb{E} \sum_{i\in\cN(t)}\left\vert
x_{i}\left(  t\right)  \right\vert  \leq \mathbb{E}
\left\vert x_{1}\left(  t\right)  \right\vert
\end{align*}
by the exchangeability among the particles, and the RHS is finite, by  \eqref{joint-HK} with $\ell=1$.

\begin{remark}\label{joint-tight}
As a consequence of the tightness of $\big\{\mu^{N,m}_t:t\in[0,T]\big\}_{m\le M}$, $N\in\N$, the sequence of $\cD_T(\cM_{+,1})^M\times C([0,T];\R)^{|K|}$-valued random variables
\[
\big\{\big\{\mu^{N,m}_t:t\in[0,T]\big\}_{m\le M},\big\{W_t^k:t\in[0,T]\big\}_{k\in K}\big\}, \quad N\in \N
\] 
is also tight, as needed in Section \ref{sec:emp-mea}, where $C([0,T];\R)$ is endowed with the uniform topology. Note that the Brownian motions are independent of $N$.
\end{remark}

\section{Existence and boundedness of limit density}\label{exist-density}
In this section, we show that 
\begin{proposition}\label{ppn:density}
Any subsequential limit in law of $\{\mu_t^{N,m}(dx), t\in[0,T]\}_{m\le M}$ in $\cD_T(\cM_{+,1})^M$ is concentrated on absolutely continuous paths, and its density with respect to Lebesgue measure is uniformly bounded by the deterministic constant $\Gamma$ in Condition \ref{cond:init}. 
\end{proposition}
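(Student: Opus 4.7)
The plan is to dominate the true coagulating system by an auxiliary ``free'' particle system without coagulation, and to transfer the uniform density bound from the free system (where it follows from a maximum principle for a linear SPDE) to the coagulating one by a measure-theoretic comparison. The key observation is that coagulation can only remove particles from the system, so each $\mu^{N,m}$ sits pointwise below the total free empirical measure, whose limit density is controlled via the initial $L^\infty$-bound $\Gamma$.

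First, I introduce on $(\Omega,\cF,\P)$ the auxiliary free system $(x_i^f)_{i=1}^\infty$ obeying \eqref{SDE-free}, driven by the same environmental Brownian motions $\{W^k\}_{k\in K}$ and the same individual Brownian motions $\{\beta_i\}$, with $x_i^f(0)=x_i(0)$. By the coupling identity \eqref{lifespan}, $x_i^N(t)=x_i^f(t)$ throughout each active particle's lifespan in the true system. Setting
\[
\nu^N_t(dx):=\frac{1}{N}\sum_{i=1}^N \delta_{x_i^f(t)}(dx),
\]
this immediately yields the pointwise measure inequality $\mu^{N,m}_t(A)\le \nu^N_t(A)$ for every Borel $A\subset\R^d$, every $1\le m\le M$, and every $t\in[0,T]$, since each particle contributing to the left-hand side is active and hence occupies a position shared with its free counterpart.

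Second, I invoke the results announced in Section \ref{sec:free}, specifically Theorem \ref{thm:reg-free} and Lemma \ref{lemma uniform upper bound}: the sequence $\{\nu^N\}$ is tight in $\cD_T(\cM_{+,1})$ and every weak subsequential limit $\ovl\nu$ is $\bP$-a.s. absolutely continuous with respect to Lebesgue measure, with density $u^f(t,x)$ satisfying $\sup_{t\in[0,T]}\|u^f(t,\cdot)\|_\infty \le \Gamma$. The mechanism is that conditional on the environmental noise the free particles are exchangeable, and by a conditional law of large numbers $u^f$ solves a linear stochastic transport-diffusion equation with initial datum $\sum_{m=1}^M r_m p_m$; a maximum principle for this SPDE then propagates the bound $\big\|\sum_m r_m p_m\big\|_\infty \le \sum_m r_m\Gamma=\Gamma$.

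Third, I fix a subsequence $N_j$ along which $\{\mu^{N_j,m}\}_{m\le M}$ converges in law to $\{\ovl\mu^m\}_{m\le M}$, and extract a further subsequence yielding joint weak convergence of $\big(\{\mu^{N_j,m}\}_{m\le M},\nu^{N_j}\big)$ to $\big(\{\ovl\mu^m\}_{m\le M},\ovl\nu\big)$. By Skorohod's representation theorem I realize this convergence almost surely on an auxiliary probability space. The pointwise inequality $\mu^{N_j,m}_t\le \nu^{N_j}_t$, tested against any nonnegative $\phi\in C_c(\R^d)$, passes to the limit at continuity points of the limit path, and by c\`adl\`ag regularity extends to all $t\in[0,T]$, yielding $\ovl\mu^m_t\le \ovl\nu_t$ as measures. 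Combined with the density bound on $\ovl\nu$, this forces $\ovl\mu^m_t(dx)=u_m(t,x)dx$ with $0\le u_m(t,x)\le u^f(t,x)\le \Gamma$, $\bP$-a.s., uniformly in $t\in[0,T]$. The substantive work lies entirely in the second step, namely in establishing the uniform $L^\infty$-bound on the limit density of the free system through the SPDE regularity and maximum principle machinery developed in Section \ref{sec:free}; given that input, the present proposition reduces to the elementary coupling and domination argument above.
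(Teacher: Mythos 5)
Your proposal is correct and follows essentially the same route as the paper: couple with the free system via \eqref{lifespan}, obtain the pointwise domination $\mu^{N,m}_t\le\mu^{N,f}_t$, pass it to the limit through a joint Skorohod representation, and import the uniform bound $\Gamma$ on the free limit density from Theorem \ref{thm:reg-free} and Lemma \ref{lemma uniform upper bound}. The only cosmetic difference is that the paper spells out the extension of the limiting inequality from open sets to all Borel sets via outer regularity to conclude absolute continuity, a step you compress but which is standard given domination by an absolutely continuous measure.
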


Denote on $(\Omega,\cF,\P)$ the empirical measure of all active particles (regardless of mass) in the true system
\[
\mu_t^N(dx):=\sum_{m=1}^M\mu_t^{N,m}(dx)=\frac{1}{N}\sum_{i\in\cN(t)}\delta_{x_i^N(t)}(dx), \quad  t\ge 0.
\]
Let us recall the auxiliary free system of particles $\{x_i^f(t)\}_{i=1}^\infty$ introduced in Section \ref{sec:bounds}. For each $N\in\N$, we denote their empirical measure 
\[
\mu_t^{N,f}(dx):= \frac{1}{N}\sum_{i=1}^{N}\delta_{x^f_i(t)}(dx), \quad t\ge 0.
\]
In addition to \eqref{lifespan}, we also have the following set inclusion holding, $\P$-a.s. for all $t\ge0$
\begin{align*}
\left\{x_i^N(t): i\in\cN(t)\right\}\subset\left\{x_i^f(t): i=1,...,N\right\}\subset\R^d
\end{align*}
and the domination between the empirical measures: $\P$-a.s. for any $N,m$ and $t$, and Borel set $B\subset\R^d$,
\begin{align}\label{domin}
\mu_t^{N,m}(B)\le \mu_t^N(B)\le \mu_t^{N,f}(B).
\end{align}
It is also clear that $\mu_0^{N,f}(dx)\to\sum_{m=1}^Mr_mp_m(x)dx$ (weakly), as $N\to\infty$, in $\cM_{+,1}(\R^d)$ in probability.

\medskip
We now argue that in order to show Proposition \ref{ppn:density}, it is sufficient to prove that 
\begin{proposition}\label{claim:free-sys}
For every finite $T$, the sequence of empirical measures of the auxiliary free system, $\{\mu_t^{N,f}(dx): t\in[0,T]\}$, converges in law as $N\to\infty$ to a limit $\{\ovl{\mu}^f_t(dx): t\in[0,T]\}$, in the space $C([0,T];\cM_{+,1}(\R^d))$. The latter random measure is absolutely continuous with respect to Lebesgue measure, with a density $\ovl {u}^f(t,x)$ uniformly bounded by the deterministic constant $\Gamma$. 
\end{proposition}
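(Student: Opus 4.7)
My plan is to exploit the conditional independence of the free system, which makes it strictly simpler than the coagulating one. First, I would establish tightness of $\{\mu^{N,f}\}$ in $C([0,T];\cM_{+,1}(\R^d))$ (the paths are continuous, so the Skorohod space is not needed): applying It\^o's formula to $\frac{1}{N}\sum_{i=1}^N\phi(x_i^f(t))$ gives a semimartingale decomposition with only the two diffusion martingales analogous to $M^{1,D,\phi},M^{2,D,\phi}$, no coagulation jump term and no coagulation-rate drift. The moment condition $\E|\langle\mu^{N,f}_t-\mu^{N,f}_s,\phi\rangle|\le C_\phi|t-s|^{1/2}$ of Proposition \ref{suff-rel-cpt} then follows by BDG exactly as in Section \ref{tightness}, and the first-moment condition is immediate from exchangeability together with Proposition \ref{bdd-density} (with $\ell=1$). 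Following then the Skorohod/BDG/Vitali chain of Section \ref{sec:emp-mea}, I identify any subsequential weak limit $(\ovl\mu^f,(\ovl W^k)_{k\in K})$ as an analytically weak solution of the linear SPDE
$$\langle\ovl\mu^f_t,\phi\rangle = \left\langle\textstyle\sum_{m=1}^M r_m p_m,\phi\right\rangle + \int_0^t\langle\ovl\mu^f_s,\cL^*\phi\rangle\,ds + \sum_{k\in K}\int_0^t\langle\ovl\mu^f_s,\sigma_k\cdot\nabla\phi\rangle\,d\ovl W^k_s,$$
with $\cL$ as in \eqref{elliptic}. The independent-noise martingale contribution vanishes in $L^2$ at rate $O(1/N)$, exactly as in \eqref{mg-van-1}.

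For existence of a density and the $L^\infty$ bound I would exploit the conditional independence structure directly. Since the initial positions $x_i^f(0)$ are i.i.d., the $\beta_i$ are i.i.d.\ and independent of $W$, and the same $W^k$ drives \eqref{SDE-free} for every $i$, the processes $\{x_i^f(\cdot)\}_{i\in\N}$ are i.i.d.\ conditionally on $\cF^W_\infty := \sigma(\{W^k_s : s\ge 0,\,k\in K\})$. A conditional strong law of large numbers (de Finetti/Glivenko--Cantelli for exchangeable arrays) then yields that $\mu^{N,f}_t$ converges weakly, $\P$-a.s., to the conditional law of $x_1^f(t)$ given $\cF^W_\infty$, for every $t$, and the already-proven tightness upgrades this to convergence in the path space $C([0,T];\cM_{+,1})$. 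The limit is thus uniquely determined (no extraction is needed once SPDE uniqueness is in hand) and admits an explicit representation as a conditional law.

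The principal obstacle is the pointwise bound $\ovl u^f(t,x)\le\Gamma$, to be executed in Section \ref{sec:free}. The natural strategy is to exploit an $L^\infty$-maximum principle for the linear SPDE: in Stratonovich form it reads
$$d\ovl u^f = \tfrac{\lambda^2}{2}\Delta \ovl u^f\, dt + \sum_{k\in K}\sigma_k\cdot\nabla \ovl u^f\, \circ\, d\ovl W^k,$$
whose two ``semigroup factors'' are the heat semigroup (a contraction on $L^\infty$) and the measure-preserving stochastic flow generated by the $\sigma_k$ (an isometry on $L^\infty$, thanks to $\operatorname{div}\sigma_k=0$). Rigorously, under the $C_b^\infty$ hypothesis on $\{\sigma_k\}$ the stochastic flow of characteristics $\psi_t$ is a volume-preserving diffeomorphism of $\R^d$ and a Feynman--Kac-type representation expresses $\ovl u^f(t,x)$ as the expectation, with respect to an independent Brownian motion encoding the $\lambda\beta_1$ increments in the $\psi_t$-frame, of $\big(\sum_m r_m p_m\big)$ evaluated at a random characteristic starting at $x$; since each evaluation is $\le\Gamma$, taking expectation preserves the bound. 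The remaining technical point is to tie this representation, which produces one specific weak solution, to the SPDE limit identified above; this requires a uniqueness theorem for bounded weak solutions of the linear SPDE, which is proved exactly like Corollary \ref{Coroll uniqueness} (with $F_m\equiv 0$ the energy argument simplifies further). Once uniqueness is in place, the conditional-law representation coincides with the SPDE limit, and this, together with the Gy\"ongy--Krylov step, promotes subsequential convergence to convergence along the full sequence.
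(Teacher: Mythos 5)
Your proposal is correct in outline but reaches the two key conclusions (identification of the limit and the $L^\infty$ bound) by a genuinely different, more probabilistic route than the paper. The paper's Section \ref{sec:free} stays entirely on the PDE side: it identifies every subsequential limit as a measure-valued solution of the linear SPDE \eqref{equation for measures}, proves uniqueness of such \emph{measure-valued} solutions by a mild-formulation argument in the negative-order Sobolev space $W^{-2N,2}$ (Lemma \ref{lemma super ellipticity} and Proposition \ref{proposition negative order}), builds the density by solving the mild equation in $W^{2n,2}$ with $n>d/4$ and invoking the Sobolev embedding (Theorem \ref{thm:reg-free}), and finally obtains $\|u\|_\infty\le\Gamma$ by an analytic maximum principle (It\^o's formula applied to a convex approximation $\beta_\delta$ of the positive part of $u-C_0$, Lemma \ref{lemma uniform upper bound}). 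You instead exploit the conditional i.i.d.\ structure given $\sigma(W)$: the conditional LLN identifies the limit along the full sequence as the conditional law of $x_1^f(t)$, and the volume-preserving stochastic flow (Kunita) representation $\ovl u^f(t,y)=\E^{\beta_1}\big[u_0(\gamma_{0,t}^{-1}(y))\big]$ yields both the existence of the density and the bound $\Gamma$ in one stroke, since $\det D\gamma_{0,t}\equiv 1$ when $\operatorname{div}\sigma_k=0$ and the molecular noise is additive. Your route is shorter and does not need item 3 of Condition \ref{cond:init}; the paper's route is heavier but the machinery it builds (the super-ellipticity lemma, mild solutions in $W^{2n,2}$) is reused for the regularity of the nonlinear system in Proposition \ref{Proposition regularity nonlinear}, which your argument does not provide. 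One small correction: your remark that uniqueness of the linear SPDE can be proved "exactly like Corollary \ref{Coroll uniqueness}" is not quite right as stated, because that energy argument presupposes both solutions are bounded densities with $H^1$ regularity (via Lemma \ref{lemma regularization}), whereas a priori a subsequential limit is only a measure --- this is precisely why the paper resorts to negative-order Sobolev spaces. In your scheme this is harmless, since the conditional LLN already pins down the limit without any appeal to SPDE uniqueness, but you should either drop that step or restrict the uniqueness claim to the class of bounded-density solutions.
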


We know by Section \ref{tightness} that the sequence of laws $\{\mathscr L^N\}_N$ of $\big\{\{\mu ^{N,m}\}_{m\le M},\mu ^{N,f}\big\}$ taking values in $\cD_T(\cM_{+,1})^{M+1}$ form a tight sequence hence is weakly relatively compact. Fix any weak subsequential limit $\ovl{\mathscr L}$ of $\mathscr L^{N_j}$. By Skorohod's representation theorem, on an auxiliary probability space $(\wh\Omega,\wh\cF,\bP)$, there exist random variables $\big\{\{\wh\mu^{N_j,m} \}_{m\le M},  \wh\mu^{N_j, f}\big\}$, $j\ge 1$ and $\big\{\{\ovl \mu ^m\}_{m\le M}, \ovl {\mu} ^f\big\}$, having the laws $\mathscr L^{N_j}$, $j\ge1$, $\ovl{\mathscr L}$, respectively, such that $\bP$-a.s.
\[
\big\{\{\mu^{N_j,m} \}_{m\le M}, \mu^{N_j, f} \big\}\to \big\{\{\ovl \mu ^m\}_{m\le M}, \ovl {\mu} ^f\big\}, \quad j\to\infty. 
\]
In particular, $\ovl {\mu} ^f$ satisfies the properties in Proposition \ref{claim:free-sys}.

By \eqref{domin} and the representation, on $(\wh\Omega,\wh\cF,\bP)$ we have  $\bP$-a.s. for every $t, m$ and $\phi\in C_c^\infty(\R^d)$ with $\phi\ge 0$,
\[
\left\langle \wh\mu_t^{N_j,f}(dx)-\wh\mu_t^{N_j,m}(dx), \phi \right\rangle \ge 0.
\]
As $j\to\infty$, the above nonnegative sequence converges $\bP$-a.s. to 
\[
\left\langle \ovl\mu_t^{f}(dx)-\ovl \mu_t^{m}(dx), \phi \right\rangle \ge 0.
\]
This implies that $\bP$-a.s. for every open set $A\subset \R^d$, $t\in[0,T]$ and $m\le M$,
\[
\ovl \mu_t^m(A)\le \ovl {\mu}_t^f(A)=\int_A \ovl u^f(t,x)dx.
\]
Recall that any Borel probability measure on a Polish space is regular; namely the measure of a Borel set is the infimum of the measures of open sets larger than the given Borel set. Fix any Borel set $B\subset\R^d$ with null Lebesgue measure, and any open set $A\supset B$. By absolute continuity of $\ovl {\mu}_t^f$ with respect to Lebesgue, we have that $\ovl {\mu}_t^f(B)=0$, and further
\begin{align}\label{Borel}
0\le \ovl \mu_t^m(B)\le \ovl\mu_t^m(A)\le \ovl {\mu}_t^f(A), \quad \forall t\in[0,T].
\end{align}
Taking infimum over all open sets $A\supset B$ in \eqref{Borel}, we have that 
\[
0\le \ovl \mu_t^m(B)\le \inf_{\text{open } A\supset B} \ovl {\mu}_t^f(A)=\ovl {\mu}_t^f(B)=0, \quad \forall t\in[0,T].
\]
That is, $\{\ovl \mu_t^m(dx)\}_{m\le M}$ is also absolutely continuous with respect to Lebesgue measure, $\bP$-a.s. Let us denote its density by $\{u_m(t,x)\}_{m\le M}$, then we have that 
\[
\int_B \left[\ovl u^f(t,x)-u_m(t,x)\right] dx\ge 0
\]
for every Borel set $B\subset\R^d$, which implies $\ovl u^f(t,x)-u_m(t,x)\ge 0$, Lebesgue-a.e., whereby $\|u_m(t,\cdot)\|_\infty\le \Gamma$, $\bP$-a.s.

Thus we are left to show Proposition \ref{claim:free-sys}. As a preliminary, note that we can repeat our derivation in Section \ref{sec:emp-mea} for the empirical measure $\mu^{N,f}$ of the free system, and due to its linearity (there are no coagulation terms), it is easy to show that under the Skorohod representation, any of its subsequential limit $\ovl \mu^f$ must be a measure-valued solution of the following SPDE (formally written)
\begin{align*}
\begin{cases}
\partial\mu_t(x)&=\frac{\lambda^2}{2}\Delta \mu_t(x)dt+\sum_{k\in K}\sigma_k(x)\cdot \nabla \mu_t(x)\,\circ\, dW_t^k,\quad (t,x)\in[0,T]\times\R^d, \\\\
\mu_0(x)&=\sum_{m=1}^Mr_mp_m(x)dx.
\end{cases}
\end{align*}
(see in particular  \eqref{key-on-new}, \eqref{linear-terms}, \eqref{env-mg-conv}). The Proposition \ref{claim:free-sys} is proved in the next Section \ref{sec:free}.

\section{The SPDE of the free system}\label{sec:free}

\subsection{Existence, uniqueness and regularity results}

Arguing in the "free" case as in the more difficult case with interaction, we
can prove that the family of laws $Q_{N}$ of the empirical measures are tight
and that every limit point may be seen, on a suitable probability space, as a
random time-dependent probability measure $\mu_{t}$ (in the case with
interaction there were only finite measures, but here the empirical measures
have mass equal to one), adapted to the noise $W$, satisfying%
\begin{equation}\label{equation for measures}
\begin{aligned}
\int\phi\left(  t,x\right)  \mu_{t}\left(  dx\right)   &  =\int\phi\left(
0,x\right)  u_{0}\left(  x\right)  dx+\sum_{k\in K}\int_{0}^{t}\left(
\mathbb{\int}\nabla\phi\left(  s,x\right)  \cdot\sigma_{k}\left(  x\right)
\mu_{s}\left(  dx\right)  \right)  dW_{s}^{k}\\
&  +\int_{0}^{t}\frac{1}{2}\sum_{ij}\left(  \mathbb{\int}\partial_{i}%
\partial_{j}\phi\left(  s,x\right)  \left(  \lambda^2\delta_{ij}+Q_{ij}\left(
x,x\right)  \right)  \mu_{s}\left(  dx\right)  \right)  ds \\
&  +\int_{0}^{t}\int\partial_{s}\phi\left(  s,x\right)  \mu_{s}\left(
dx\right)  ds 
\end{aligned}
\end{equation}
for every $\phi\in C_{c}^{1,2}\left(  \left[  0,T\right]  \times\mathbb{R}%
^{d}\right)  $.

The first result we want to prove is the uniqueness of measure-valued
solutions to this equation. Although potentially several techniques may be
used, we present here one based on SPDE theory in negative order Sobolev
spaces, which may be of interest on its own. The approach presented here is
inspired by \cite{Fla 95 book}. For the sake of simplicity we develop the theory
in the Hilbert scale $W^{\alpha,2}\left(  \mathbb{R}^{d}\right)  $ but,
following \cite{Krylov} and \cite{Agresti}, one can work in a Banach scale
$W^{\alpha,p}\left(  \mathbb{R}^{d}\right)  $ with $p>2$ which has the
advantage to reduce the necessary degree of differentiability to have the
Sobolev embedding $W^{\alpha,p}\left(  \mathbb{R}^{d}\right)  \subset
C_{b}\left(  \mathbb{R}^{d}\right)  $ (see below) and thus allows one to ask
less differentiability of the coefficients $\sigma_{k}$; since an optimal result
is still not clear, we do not stress this level of generality in this work.

In dimension $d$ one has the Sobolev embedding $W^{\frac{d}{2}+\delta
,2}\left(  \mathbb{R}^{d}\right)  \subset C_{b}\left(  \mathbb{R}^{d}\right)
$ for every $\delta>0$;\ for simplicity of exposition we take $\delta=\frac
{1}{2}$. Therefore the set of probability measures $\Pr\left(  \mathbb{R}%
^{d}\right)  $ is contained in the dual of $W^{\frac{d+1}{2},2}\left(
\mathbb{R}^{d}\right)  $, the negative order Sobolev space $W^{-\frac{d+1}%
{2},2}\left(  \mathbb{R}^{d}\right)  $:%
\[
\Pr\left(  \mathbb{R}^{d}\right)  \subset W^{-\frac{d+1}{2},2}\left(
\mathbb{R}^{d}\right)  \text{.}%
\]
The set-theoretical inclusion is a continuous embedding of metric spaces when
$\Pr\left(  \mathbb{R}^{d}\right)  $ is endowed of the distance%
\[
d\left(  \mu,\nu\right)  =\sup_{\left\Vert \phi\right\Vert _{\infty}\leq
1}\left\vert \left\langle \mu,\phi\right\rangle -\left\langle \nu
,\phi\right\rangle \right\vert
\]
where $\left\langle \mu,\phi\right\rangle $ denotes, as usual, $\int%
_{\mathbb{R}^{d}}\phi\left(  x\right)  \mu\left(  dx\right)  $. Moreover, up
to a constant $C>0$, one has%
\[
\left\Vert \mu\right\Vert _{W^{-\frac{d+1}{2},2}}\leq C
\]
for all $\mu\in\Pr\left(  \mathbb{R}^{d}\right)  $.

A measure-valued solution in the sense above is also a weak solution of class
$W^{-\frac{d+1}{2},2}$. Choosing $\phi\left(  t,x\right)  $ related to
$e^{t\mathcal{L}^{\ast}}\psi$, with suitable choice of times, one can rewrite
the previous identity as
\[
\left\langle \psi,\mu_{t}\right\rangle =\left\langle e^{t\mathcal{L}^{\ast}%
}\psi,\mu_{0}\right\rangle -\sum_{k\in K}\int_{0}^{t}\left\langle \sigma
_{k}\cdot\nabla e^{\left(  t-s\right)  \mathcal{L}^{\ast}}\psi,\mu
_{s}\right\rangle dW_{s}^{k}%
\]
for every $\psi\in C_{c}^{\infty}\left(  \mathbb{R}^{d}\right)  $, where we
write $e^{t\mathcal{L}^{\ast}}$ so that the duality structure is more clear,
but it is equal to $e^{t\mathcal{L}}$, $\cL$ as in \eqref{elliptic}. We only remark that, thanks to the
$C_{b}^{\infty}$ regularity of $\sigma_{k}$, the semigroup $e^{t\mathcal{L}}$
maps $W^{k,2}\left(  \mathbb{R}^{d}\right)  $ in itself for every $k$, so that
$\sigma_{k}\cdot\nabla e^{\left(  t-s\right)  \mathcal{L}^{\ast}}\psi$ is of
class $W^{\frac{d+1}{2},2}$ and thus the duality is well defined under the
stochastic integral (similarly for the initial condition term).

We want to move a step further, namely interpret the previous identity as an
equation of the form%
\begin{equation}
\mu_{t}=e^{t\mathcal{L}}\mu_{0}+\sum_{k\in K}\int_{0}^{t}e^{\left(
t-s\right)  \mathcal{L}}\sigma_{k}\cdot\nabla\mu_{s}dW_{s}^{k}\label{H -32}%
\end{equation}
in a suitable Hilbert space. For technical reasons which will be clear below,
given $d$, we choose the minimal positive integer $N$ such that $2N+1\geq
\frac{d+1}{2}$. Then we consider progressively measurable processes $\mu_{t}$
in $W^{-2N,2}\left(  \mathbb{R}^{d}\right)  $ with the following regularity:%
\begin{equation}
\sup_{t\in\left[  0,T\right]  }\mathbb{E}\left[  \left\Vert \mu_{t}\right\Vert
_{W^{-2N,2}}^{2}\right]  +\mathbb{E}\int_{0}^{T}\left\Vert \mu_{t}\right\Vert
_{W^{-2N+1,2}}^{2}dt<\infty.\label{regularity negative}%
\end{equation}
Probability measure solutions of the equation above belong to this space.
Under these condition, $\mathbb{E}\int_{0}^{T}\left\Vert \nabla\mu
_{t}\right\Vert _{W^{-2N,2}}^{2}dt<\infty$ ($\nabla$ interpreted in the sense
of distributions), the multiplication with $\sigma_{k}\in C_{b}^{\infty
}\left(  \mathbb{R}^{d}\right)  $ remains of the same class; and the operators
$e^{\left(  t-s\right)  \mathcal{L}}$ are equibounded (on finite time
intervals) in $W^{-2N,2}\left(  \mathbb{R}^{d}\right)  $ by duality, hence
$\int_{0}^{t}e^{\left(  t-s\right)  \mathcal{L}}\sigma_{k}\cdot\nabla\mu
_{s}dW_{s}^{k}$ is a stochastic process in $W^{-2N,2}\left(  \mathbb{R}%
^{d}\right)  $. We interpret (\ref{H -32}) thus as an identity in
$W^{-2N,2}\left(  \mathbb{R}^{d}\right)  $, for solution of class
(\ref{regularity negative}).

Call $a_{ij}\left(  x\right)  :=\frac{1}{2}\left(  \lambda^2 +Q_{ij}\left(  x,x\right)
\right)  $. It is easy to check (\cite{Flandoli LNM}) that there exists $\eta_{0}\in\left(
0,1\right)  $ such that%
\[
\frac{1}{2}\sum_{k\in K}\left(  \sigma_{k}\left(  x\right)  \cdot\xi\right)
^{2}\leq\eta_{0}\sum_{i,j}a_{ij}\left(  x\right)  \xi_{i}\xi_{j}%
\]
for all $\xi\in\mathbb{R}^{d}$. This implies%
\[
\sum_{k\in K}\left\Vert \sigma_{k}\cdot\nabla g\right\Vert _{L^{2}}^{2}%
\leq-2\eta_{0}\left\langle \mathcal{L}g,g\right\rangle
\]
and thus%
\begin{equation}
\sum_{k\in K}\int_{s}^{T}\left\Vert \sigma_{k}\cdot\nabla e^{\left(
t-s\right)  \mathcal{L}}f\right\Vert _{L^{2}}^{2}dt\leq\eta_{0}\left\Vert
f\right\Vert _{L^{2}}^{2}\label{basic in L2}%
\end{equation}
for all $f\in L^{2}\left(  \mathbb{R}^{d}\right)  $ because
\begin{align*}
\sum_{k\in K}\int_{s}^{T}\left\Vert \sigma_{k}\cdot\nabla e^{\left(
t-s\right)  \mathcal{L}}f\right\Vert _{L^{2}}^{2}dt  & \leq-2\eta_{0}\int%
_{s}^{T}\left\langle \mathcal{L}e^{\left(  t-s\right)  \mathcal{L}%
}f,e^{\left(  t-s\right)  \mathcal{L}}f\right\rangle dt\\
& =-\eta_{0}\int_{s}^{T}\frac{d}{dt}\left\Vert e^{\left(  t-s\right)
\mathcal{L}}f\right\Vert _{L^{2}}^{2}dt\leq\eta_{0}\left\Vert f\right\Vert
_{L^{2}}^{2}.
\end{align*}
For every $n\in\mathbb{Z}$, let us endow $W^{2n,2}\left(  \mathbb{R}%
^{d}\right)  $ by the norm
\[
\left\Vert f\right\Vert _{W^{2n,2}}:=\left\Vert \left(  1-\mathcal{L}\right)
^{n}f\right\Vert _{L^{2}}.
\]

\begin{lemma}
\label{lemma super ellipticity}Let $\eta\in\left(  \eta_{0},1\right)  $ be
given. For every integer number $n\in\mathbb{Z}$ there exists a constant
$C_{n,K}>0$ such that
\[
\sum_{k\in K}\int_{s}^{T}\left\Vert \sigma_{k}\cdot\nabla e^{\left(
t-s\right)  \mathcal{L}}f\right\Vert _{W^{2n,2}}^{2}dt\leq\left(  \eta
+C_{n,K}\left(  T-s\right)  \right)  \left\Vert f\right\Vert _{W^{2n,2}}^{2}%
\]
for all $f\in W^{2n,2}\left(  \mathbb{R}^{d}\right)  $.
\end{lemma}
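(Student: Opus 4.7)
\textbf{Proof plan for Lemma \ref{lemma super ellipticity}.} The strategy is the standard commutator trick: since $(1-\mathcal{L})^n$ commutes with $e^{(t-s)\mathcal{L}}$ by functional calculus, I would write
\[
(1-\mathcal{L})^n\bigl(\sigma_k\cdot\nabla\, e^{(t-s)\mathcal{L}}f\bigr)=\sigma_k\cdot\nabla\, e^{(t-s)\mathcal{L}}g+\bigl[(1-\mathcal{L})^n,\sigma_k\cdot\nabla\bigr]e^{(t-s)\mathcal{L}}f,
\]
where $g:=(1-\mathcal{L})^n f$, so that $\|g\|_{L^2}=\|f\|_{W^{2n,2}}$. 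Applying the elementary inequality $\|a+b\|^2\le(1+\delta)\|a\|^2+(1+\delta^{-1})\|b\|^2$ and summing over $k$ then reduces the problem to controlling the two terms separately.

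For the principal part I would directly invoke the $L^2$-level estimate \eqref{basic in L2}: summing and integrating in $t\in[s,T]$ gives
\[
\sum_{k\in K}\int_s^T\|\sigma_k\cdot\nabla e^{(t-s)\mathcal{L}}g\|_{L^2}^2\,dt\le\eta_0\|g\|_{L^2}^2=\eta_0\|f\|_{W^{2n,2}}^2.
\]
Choosing $\delta>0$ small enough that $(1+\delta)\eta_0\le\eta$ — which is possible precisely because $\eta>\eta_0$ — transfers this into the desired contraction factor $\eta$ in front of $\|f\|_{W^{2n,2}}^2$.

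The commutator term is where the time factor $(T-s)$ comes from, and it is also the main technical obstacle. The key claim is that $[(1-\mathcal{L})^n,\sigma_k\cdot\nabla]$ is a (pseudo-)differential operator of order $2n$ (one less than the sum of orders), with coefficients that are polynomials in $\sigma_k$, $\nabla\sigma_k\cdot\sigma_k$, $Q(\cdot,\cdot)$ and their derivatives — all of which are $C_b^\infty(\mathbb{R}^d)$ by hypothesis. For $n\ge 0$ one can verify this by expanding $(1-\mathcal{L})^n$ via the binomial formula and commuting $\sigma_k\cdot\nabla$ past each factor of $\mathcal{L}$, each commutation lowering the differential order by one and producing bounded smooth coefficients. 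For $n<0$, one uses the identity $[A^{-m},B]=-\sum_{j=0}^{m-1}A^{-j-1}[A,B]A^{-(m-j)}$ iteratively with $A=1-\mathcal{L}$, $B=\sigma_k\cdot\nabla$, to reduce to the positive case. Consequently the commutator maps $W^{2n,2}\to L^2$ continuously, i.e.\ $\|[(1-\mathcal{L})^n,\sigma_k\cdot\nabla]h\|_{L^2}\le C_{n,K}'\|h\|_{W^{2n,2}}$.

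Finally, since $e^{(t-s)\mathcal{L}}$ is equibounded on $W^{2n,2}(\mathbb{R}^d)$ for $t-s\in[0,T]$ (by commutativity with $(1-\mathcal{L})^n$ and the $L^2$-contraction property of the semigroup), $\|e^{(t-s)\mathcal{L}}f\|_{W^{2n,2}}\le C\|f\|_{W^{2n,2}}$. Plugging this into the commutator bound, summing over the finite set $K$, and integrating in $t$ yields
\[
(1+\delta^{-1})\sum_{k\in K}\int_s^T\bigl\|\bigl[(1-\mathcal{L})^n,\sigma_k\cdot\nabla\bigr]e^{(t-s)\mathcal{L}}f\bigr\|_{L^2}^2\,dt\le C_{n,K}(T-s)\|f\|_{W^{2n,2}}^2,
\]
and combining with the principal-part estimate produces exactly the advertised inequality. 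The subtle point to double-check is the uniform-in-time boundedness of $e^{t\mathcal{L}}$ at negative Sobolev levels, but this follows by a duality argument from the positive-level bound together with the self-adjointness structure of $\mathcal{L}$ modulo the lower-order divergence-form correction.
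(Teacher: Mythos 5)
Your proof is correct and takes essentially the same route as the paper's: the paper's identity \eqref{identity operators} is precisely your commutator $[(1-\mathcal{L})^{n},\sigma_{k}\cdot\nabla]$ in the case $n=1$ (an order-$2n$ operator with $C_{b}^{\infty}$ coefficients), the principal term is handled by \eqref{basic in L2} with the same choice $(1+\epsilon)\eta_{0}=\eta$, and the commutator term is absorbed into $C_{n,K}(T-s)$ using the equiboundedness of $e^{t\mathcal{L}}$ on $W^{2n,2}$. The paper only writes out $n=\pm 1$ and declares the general case similar, so your uniform treatment of general $n\in\mathbb{Z}$ (including the explicit identity for negative powers of $1-\mathcal{L}$) simply fills in what the authors leave implicit.
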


\begin{proof}
The case $n=0$ is already proved above. We give the proof for $n=1$ and $n=-1
$, the general case being similar.

For every $k\in K$ there is a second order differential operator
$D_{k}^{\left(  2\right)  }$, with coefficients of class $C_{b}^{\infty
}\left(  \mathbb{R}^{d}\right)  $, such that
\begin{equation}
\sigma_{k}\cdot\nabla\left(  1-\mathcal{L}\right)  f=\left(  1-\mathcal{L}%
\right)  \sigma_{k}\cdot\nabla f+D_{k}^{\left(  2\right)  }%
f\label{identity operators}%
\end{equation}
for all $f\in C_{c}^{\infty}\left(  \mathbb{R}^{d}\right)  $. Indeed, the
terms with third order derivatives in $\sigma_{k}\cdot\nabla\left(
1-\mathcal{L}\right)  f$ and $\left(  1-\mathcal{L}\right)  \sigma_{k}%
\cdot\nabla f$ coincide. Therefore (case $n=1$)
\begin{align*}
\sum_{k\in K}\int_{s}^{T}\left\Vert \sigma_{k}\cdot\nabla e^{\left(
t-s\right)  \mathcal{L}}f\right\Vert _{W^{2,2}}^{2}dt  & =\sum_{k\in K}%
\int_{s}^{T}\left\Vert \left(  1-\mathcal{L}\right)  \sigma_{k}\cdot\nabla
e^{\left(  t-s\right)  \mathcal{L}}f\right\Vert _{L^{2}}^{2}dt\\
& =\sum_{k\in K}\int_{s}^{T}\left\Vert \left[  \sigma_{k}\cdot\nabla\left(
1-\mathcal{L}\right)  -D_{k}^{\left(  2\right)  }\right]  e^{\left(
t-s\right)  \mathcal{L}}f\right\Vert _{L^{2}}^{2}dt.
\end{align*}
For $\epsilon>0$ we use the inequality $\left(  a+b\right)  ^{2}\leq\left(
1+\epsilon\right)  a^{2}+\left(  1+\frac{1}{\epsilon}\right)  b^{2}$ to get%
\[
\leq\left(  1+\epsilon\right)  \sum_{k\in K}\int_{s}^{T}\left\Vert \sigma
_{k}\cdot\nabla e^{\left(  t-s\right)  \mathcal{L}}\left(  1-\mathcal{L}%
\right)  f\right\Vert _{L^{2}}^{2}dt+\left(  1+\frac{1}{\epsilon}\right)
\sum_{k\in K}\int_{s}^{T}\left\Vert D_{k}^{\left(  2\right)  }e^{\left(
t-s\right)  \mathcal{L}}f\right\Vert _{L^{2}}^{2}dt.
\]
The first term is handled by (\ref{basic in L2}), the second one by a trivial
bound, to get%
\[
\leq\left(  1+\epsilon\right)  \eta_{0}\left\Vert \left(  1-\mathcal{L}%
\right)  f\right\Vert _{L^{2}}^{2}+C_{\epsilon,K}\left(  T-s\right)
\left\Vert f\right\Vert _{W^{2,2}}^{2}.
\]
If $\epsilon$ satisfies $\left(  1+\epsilon\right)  \eta_{0}=\eta$, this is
the required bound.

For $n=-1$, we have to prove
\[
\sum_{k\in K}\int_{s}^{T}\left\Vert \left(  1-\mathcal{L}\right)  ^{-1}%
\sigma_{k}\cdot\nabla e^{\left(  t-s\right)  \mathcal{L}}f\right\Vert _{L^{2}%
}^{2}dt\leq\left(  \eta+C_{-1,K}\left(  T-s\right)  \right)  \left\Vert
\left(  1-\mathcal{L}\right)  ^{-1}f\right\Vert _{L^{2}}^{2}.
\]
We set $g=\left(  1-\mathcal{L}\right)  ^{-1}f\in L^{2}\left(  \mathbb{R}%
^{d}\right)  $, so that we have to prove%
\[
\sum_{k\in K}\int_{s}^{T}\left\Vert \left(  1-\mathcal{L}\right)  ^{-1}%
\sigma_{k}\cdot\nabla\left(  1-\mathcal{L}\right)  e^{\left(  t-s\right)
\mathcal{L}}g\right\Vert _{L^{2}}^{2}dt\leq\left(  \eta+C_{-1,K}\left(
T-s\right)  \right)  \left\Vert g\right\Vert _{L^{2}}^{2}.
\]
By (\ref{identity operators}), the left-hand-side is bounded by
\[
\leq\left(  1+\epsilon\right)  \sum_{k\in K}\int_{s}^{T}\left\Vert \sigma
_{k}\cdot\nabla e^{\left(  t-s\right)  \mathcal{L}}g\right\Vert _{L^{2}}%
^{2}dt+\left(  1+\frac{1}{\epsilon}\right)  \sum_{k\in K}\int_{s}%
^{T}\left\Vert \left(  1-\mathcal{L}\right)  ^{-1}D_{k}^{\left(  2\right)
}e^{\left(  t-s\right)  \mathcal{L}}g\right\Vert _{L^{2}}^{2}dt
\]
and the claimed result is proved as above.
\end{proof}

\begin{proposition}
\label{proposition negative order}Equation (\ref{H -32}) has a unique solution
in the class (\ref{regularity negative}). In particular, the equation for
probability measures (\ref{equation for measures}) has a unique solution.
\end{proposition}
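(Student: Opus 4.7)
The plan is to prove \emph{uniqueness}; existence in the class \eqref{regularity negative} is already granted by the tightness/limit-of-empirical-measures argument sketched just above the statement, since every probability measure belongs to $H := W^{-2N,2}(\R^d)$ with a uniform norm bound (via $\Pr(\R^d) \subset W^{-(d+1)/2,2}$ and the choice $2N+1 \geq (d+1)/2$), and the weak form \eqref{equation for measures} converts into the mild form \eqref{H -32} through the usual device $\phi(s,x) = e^{(t-s)\mathcal{L}^{\ast}}\psi(x)$. For uniqueness, take two solutions $\mu, \mu'$ of \eqref{H -32} with the same initial datum, set $\nu_t := \mu_t - \mu'_t$, and note $\nu$ solves the homogeneous linear SPDE
\[
d\nu_t \;=\; \mathcal{L}\nu_t\,dt + \sum_{k\in K}\sigma_k\cdot\nabla\nu_t\,dW_t^k, \qquad \nu_0=0,
\]
in the Gelfand triple $V := W^{-2N+1,2} \hookrightarrow H \hookrightarrow V^{\ast} := W^{-2N-1,2}$. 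The maps $\mathcal{L}:V\to V^{\ast}$ and $\sigma_k\cdot\nabla : V\to H$ are continuous, and \eqref{regularity negative} is precisely the hypothesis $\nu \in L^2(\Omega\times[0,T];V)$ required by the variational It\^o formula (Krylov--Rozovskii / Pardoux / Pr\'ev\^ot--R\"ockner), which then yields
\[
\mathbb{E}\|\nu_t\|_H^2 \;=\; \mathbb{E}\int_0^t\!\Bigl(2\langle \mathcal{L}\nu_s,\nu_s\rangle_{V^{\ast},V} + \sum_{k\in K}\|\sigma_k\cdot\nabla\nu_s\|_H^2\Bigr)\,ds.
\]

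The decisive step is the pointwise coercivity estimate
\[
2\langle \mathcal{L}\nu,\nu\rangle_{V^{\ast},V} + \sum_{k\in K}\|\sigma_k\cdot\nabla\nu\|_H^2 \;\leq\; C\|\nu\|_H^2, \qquad \nu\in V,
\]
after which Gr\"onwall forces $\mathbb{E}\|\nu_t\|_H^2 \equiv 0$ and hence $\nu\equiv 0$. To establish it, set $g := (1-\mathcal{L})^{-N}\nu \in W^{1,2}(\R^d)$; since $(1-\mathcal{L})^{-N}$ commutes with $\mathcal{L}$, one has $\|\nu\|_H = \|g\|_{L^2}$ and $\langle \mathcal{L}\nu,\nu\rangle_{V^{\ast},V} = \langle \mathcal{L}g,g\rangle_{L^2}$. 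The key rewriting is the commutator identity
\[
(1-\mathcal{L})^{-N}\,\sigma_k\cdot\nabla\,(1-\mathcal{L})^{N} \;=\; \sigma_k\cdot\nabla \;+\; R_k,
\]
where $R_k := (1-\mathcal{L})^{-N}\,[\sigma_k\cdot\nabla,(1-\mathcal{L})^{N}]$ is bounded on $L^2(\R^d)$: indeed $[\sigma_k\cdot\nabla,(1-\mathcal{L})^{N}]$ is a differential operator of order $2N$ with $C_b^\infty$ coefficients, so its composition with the order-$(-2N)$ smoothing $(1-\mathcal{L})^{-N}$ produces an $L^2$-bounded (order-zero) operator -- the same mechanism as identity \eqref{identity operators} and the commutator argument in Lemma \ref{lemma commutator}. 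Feeding this into Young's inequality $\|\sigma_k\cdot\nabla g + R_k g\|_{L^2}^2 \leq (1+\varepsilon)\|\sigma_k\cdot\nabla g\|_{L^2}^2 + (1+\varepsilon^{-1})\|R_k g\|_{L^2}^2$ together with the pointwise super-ellipticity $\sum_k\|\sigma_k\cdot\nabla g\|_{L^2}^2 \leq -2\eta_0\langle \mathcal{L}g,g\rangle$ (with $\eta_0<1$, established in the preparation to Lemma \ref{lemma super ellipticity}) produces
\[
\sum_k\|\sigma_k\cdot\nabla\nu\|_H^2 + 2\langle \mathcal{L}g,g\rangle \;\leq\; -2\bigl(1-(1+\varepsilon)\eta_0\bigr)\bigl(-\langle \mathcal{L}g,g\rangle\bigr) + C\|g\|_{L^2}^2,
\]
and fixing $\varepsilon$ so that $(1+\varepsilon)\eta_0 < 1$ makes the first term on the right non-positive, leaving the desired bound $C\|\nu\|_H^2$.

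The main obstacle I anticipate is justifying It\^o's formula applied to $\|\nu_t\|_H^2$: although $\nu_t \in H$, the noise coefficient $\sigma_k\cdot\nabla\nu_t$ lies only in $V^{\ast}$ and \emph{not} in $H$, so the Hilbert-space It\^o formula is not directly available and the Gelfand-triple variational framework is indispensable -- which is precisely why the $V$-integrability part of \eqref{regularity negative} has been built into the definition of admissible solutions. As a safety net, one can instead work on the convolved process $\rho_\varepsilon \ast \nu_t$ in the spirit of the proof of Lemma \ref{lemma regularization}, derive the same energy inequality uniformly in $\varepsilon$ by controlling the convolution commutators as in Lemma \ref{lemma commutator}, and pass to the limit $\varepsilon \to 0$.
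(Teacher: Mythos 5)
Your proof is correct, but it takes a genuinely different route from the paper's. The paper never applies an It\^o formula in the negative-order space: it introduces the auxiliary processes $v_k(t)=\sigma_k\cdot\nabla\mu_t$, observes that they satisfy the closed system $v_k(t)=\sum_{h\in K}\int_0^t\sigma_k\cdot\nabla e^{(t-s)\mathcal{L}}v_h(s)\,dW_s^h$, and then invokes the space--time estimate of Lemma \ref{lemma super ellipticity} (whose proof rests on the same commutator mechanism \eqref{identity operators} that underlies your operator $R_k$) to get
$\sum_{k}\mathbb{E}\int_0^T\|v_k(t)\|_{W^{-2N,2}}^2\,dt\le(\eta+C_KT)\sum_{h}\mathbb{E}\int_0^T\|v_h(s)\|_{W^{-2N,2}}^2\,ds$
with $\eta<1$; for $T$ small this forces $v_k\equiv0$, hence $\mu_t\equiv0$ directly from the mild formula, and the argument is iterated over intervals of fixed length. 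This stays entirely inside the mild formulation and so sidesteps exactly the point you flag as delicate, namely justifying the variational It\^o formula for $\|\nu_t\|_{W^{-2N,2}}^2$ (equivalently, passing from the mild to the variational formulation). Your Gelfand-triple setup $W^{-2N+1,2}\hookrightarrow W^{-2N,2}\hookrightarrow W^{-2N-1,2}$, the $L^2$-boundedness of $R_k=(1-\mathcal{L})^{-N}[\sigma_k\cdot\nabla,(1-\mathcal{L})^{N}]$, and the resulting coercivity do make the Krylov--Rozovskii/Pardoux framework applicable here (the coefficients are linear and bounded between the right spaces, and \eqref{regularity negative} supplies the required $V$-integrability), and your mollification fallback mirrors Lemma \ref{lemma regularization}; in effect you are running the negative-order analogue of the paper's proof of Corollary \ref{Coroll uniqueness}. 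What the paper's route buys is economy and robustness --- it reuses Lemma \ref{lemma super ellipticity}, which is needed anyway for the existence and regularity results, and requires nothing beyond the It\^o isometry; what yours buys is an energy inequality with an explicit Gr\"onwall constant, which would also yield quantitative stability of solutions with respect to the initial datum. Your handling of existence (inherited from the empirical-measure limit) matches the paper's implicit treatment.
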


\begin{proof}
Let $\mu_{t}^{\left(  i\right)  }$, $i=1,2$ be two solutions and $\mu_{t}%
=\mu_{t}^{\left(  1\right)  }-\mu_{t}^{\left(  2\right)  }$. Then%
\[
\mu_{t}=\sum_{k\in K}\int_{0}^{t}e^{\left(  t-s\right)  \mathcal{L}}\sigma
_{k}\cdot\nabla\mu_{s}dW_{s}^{k}.
\]
~We introduce the auxiliary processes in $W^{-2N,2}\left(  \mathbb{R}%
^{d}\right)  $
\[
v_{k}\left(  t\right)  =\sigma_{k}\cdot\nabla\mu_{t}%
\]
and their equations%
\[
v_{k}\left(  t\right)  =\sum_{h\in K}\int_{0}^{t}\sigma_{k}\cdot\nabla
e^{\left(  t-s\right)  \mathcal{L}}v_{h}\left(  s\right)  dW_{s}^{h}.
\]
We have%
\[
\sum_{k\in K}\mathbb{E}\int_{0}^{T}\left\Vert v_{k}\left(  t\right)
\right\Vert _{W^{-2N,2}}^{2}dt=\sum_{h\in K}\mathbb{E}\int_{0}^{T}\sum_{k\in
K}\int_{s}^{T}\left\Vert \sigma_{k}\cdot\nabla e^{\left(  t-s\right)
\mathcal{L}}v_{h}\left(  s\right)  \right\Vert _{W^{-2N,2}}^{2}dtds
\]
and thus, using Lemma \ref{lemma super ellipticity},
\[
\leq\left(  \eta+C_{K}T\right)  \sum_{h\in K}\int_{0}^{T}\mathbb{E}\left[
\left\Vert v_{h}\left(  s\right)  \right\Vert _{W^{-2N,2}}^{2}\right]  ds.
\]
This implies $v_{k}=0$ for all $k$ if $T$ is small enough, hence $\mu_{t}=0$
since%
\[
\mu_{t}=\sum_{k\in K}\int_{0}^{t}e^{\left(  t-s\right)  \mathcal{L}}%
v_{k}\left(  s\right)  dW_{s}^{k}.
\]
The argument can be repeated on intervals of constant length, proving uniqueness.
\end{proof}

We have proved pathwise uniqueness for equation (\ref{equation for measures}).
This implies convergence in probability of the empirical measures, by an
argument of \cite{Gyon Krylov} that we omit.

Now we prove the regularity of $\mu_{t}$. Consider the equation (for
functions, now)%
\begin{align*}
du\left(  t,x\right)    & =\frac{1}{2}\left(  \lambda^2\Delta+\operatorname{div}\left(
Q\left(x,x\right)  \nabla\right)  \right)  u\left(  t,x\right)  dt+\sum_{k\in
K}\sigma_{k}\left(  x\right)  \cdot\nabla u\left(  t,x\right)  dW_{t}^{k}\\
u|_{t=0}  & =u_{0}%
\end{align*}
interpreted in the mild form%
\begin{equation}
u\left(  t\right)  =e^{t\mathcal{L}}u_{0}+\sum_{k\in K}\int_{0}^{t}e^{\left(
t-s\right)  \mathcal{L}}\sigma_{k}\cdot\nabla u\left(  s\right)  dW_{s}%
^{k}.\label{mild regular}%
\end{equation}

\begin{definition}
Given $u_{0}\in W^{2n,2}\left(  \mathbb{R}^{d}\right)  $, we say that $u$ is a
mild solution in $W^{2n,2}\left(  \mathbb{R}^{d}\right)  $ of equation
(\ref{mild regular}) if it is progressively measurable in $W^{2n,2}\left(
\mathbb{R}^{d}\right)  $, satisfies%
\[
\sup_{t\in\left[  0,T\right]  }\mathbb{E}\left[  \left\Vert u\left(
t,\cdot\right)  \right\Vert _{W^{2n,2}}^{2}\right]  +\mathbb{E}\int_{0}%
^{T}\left\Vert u\left(  t,\cdot\right)  \right\Vert _{W^{2n+1,2}}^{2}dt<\infty
\]
and identity (\ref{mild regular}) holds true.
\end{definition}

\begin{proposition}
Given $u_{0}\in W^{2n,2}\left(  \mathbb{R}^{d}\right)  $, there exists a
unique mild solution in $W^{2n,2}\left(  \mathbb{R}^{d}\right)  $ of equation
(\ref{mild regular}).
\end{proposition}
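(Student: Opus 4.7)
The plan is to follow exactly the template of Proposition \ref{proposition negative order}, only transposed from the negative-order scale $W^{-2N,2}$ to the positive-order scale $W^{2n,2}$, with Lemma \ref{lemma super ellipticity} providing the decisive contractive factor in both existence and uniqueness.

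For uniqueness, I would take two mild solutions $u^{(1)}, u^{(2)}$ in the stated class and let $w = u^{(1)} - u^{(2)}$, which satisfies
\[
w(t)=\sum_{k\in K}\int_{0}^{t}e^{(t-s)\mathcal{L}}\,\sigma_{k}\cdot\nabla w(s)\,dW_{s}^{k}.
\]
Introduce the auxiliary processes $v_{k}(t):=\sigma_{k}\cdot\nabla w(t)$, which are well defined in $W^{2n,2}$ since $w(t)\in W^{2n+1,2}$ for a.e.\ $t$, and which satisfy the closed system $v_{k}(t)=\sum_{h\in K}\int_{0}^{t}\sigma_{k}\cdot\nabla e^{(t-s)\mathcal{L}}v_{h}(s)\,dW_{s}^{h}$. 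By It\^o's isometry, Fubini, and Lemma \ref{lemma super ellipticity},
\[
\sum_{k\in K}\mathbb{E}\int_{0}^{T}\|v_{k}(t)\|_{W^{2n,2}}^{2}\,dt \;\le\;(\eta+C_{n,K}T)\sum_{h\in K}\mathbb{E}\int_{0}^{T}\|v_{h}(s)\|_{W^{2n,2}}^{2}\,ds.
\]
Choosing $T_{0}$ small enough that $\eta+C_{n,K}T_{0}<1$ forces $v_{k}\equiv 0$ on $[0,T_{0}]$, hence $w\equiv 0$ there; iterating on consecutive intervals of length $T_{0}$ extends uniqueness to any finite horizon.

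For existence, I would run Picard iteration. Start with $u^{(0)}(t):=e^{t\mathcal{L}}u_{0}$, which belongs to the stated regularity class by the deterministic parabolic theory with $C_{b}^{\infty}$ coefficients, and set
\[
u^{(m+1)}(t):=e^{t\mathcal{L}}u_{0}+\sum_{k\in K}\int_{0}^{t}e^{(t-s)\mathcal{L}}\,\sigma_{k}\cdot\nabla u^{(m)}(s)\,dW_{s}^{k}.
\]
Write $z_{k}^{(m)}(t):=\sigma_{k}\cdot\nabla(u^{(m+1)}-u^{(m)})(t)$; exactly the computation used above for uniqueness gives
\[
\sum_{k\in K}\mathbb{E}\int_{0}^{T}\|z_{k}^{(m+1)}(t)\|_{W^{2n,2}}^{2}\,dt \;\le\;(\eta+C_{n,K}T)\sum_{h\in K}\mathbb{E}\int_{0}^{T}\|z_{h}^{(m)}(s)\|_{W^{2n,2}}^{2}\,ds,
\]
which is a strict contraction for $T\le T_{0}$. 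Thus $\{\sigma_{k}\cdot\nabla u^{(m)}\}_{m}$ is Cauchy in $L^{2}(\Omega\times[0,T_{0}];W^{2n,2})$, and passing to the limit inside the Picard recursion yields a process $u$ satisfying the mild equation on $[0,T_{0}]$. Concatenation over consecutive slices of length $T_{0}$ extends the solution to arbitrary $T$.

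To upgrade the limit to the stated regularity class, I would use It\^o's isometry together with the boundedness of $e^{t\mathcal{L}}$ on $W^{2n,2}$ (coming from the $C_{b}^{\infty}$ regularity of the $\sigma_{k}$) to bound $\sup_{t\in[0,T]}\mathbb{E}\|u(t)\|_{W^{2n,2}}^{2}$ by $\|u_{0}\|_{W^{2n,2}}^{2}$ plus the already-controlled quantity $\sum_{k}\mathbb{E}\int_{0}^{T}\|\sigma_{k}\cdot\nabla u\|_{W^{2n,2}}^{2}\,dt$; and the $L^{2}$-in-time $W^{2n+1,2}$ bound is obtained by the same mollification-plus-energy-inequality procedure as in Lemma \ref{lemma regularization}, now applied to $\|(1-\mathcal{L})^{n}u^{\epsilon}\|_{L^{2}}^{2}$ and using the super-ellipticity inequality $\sum_{k}\|\sigma_{k}\cdot\nabla\varphi\|_{L^{2}}^{2}\le -2\eta\langle\mathcal{L}\varphi,\varphi\rangle$ to absorb the quadratic-variation term.

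The main obstacle is that a naive fixed-point argument set directly in the strong norm of the solution class does not close, because the stochastic convolution offers no pointwise-in-time gain of a derivative; the whole design of the argument is to carry out the contraction at the level of the auxiliary variables $v_{k}=\sigma_{k}\cdot\nabla u$, where Lemma \ref{lemma super ellipticity} supplies the contractive constant $\eta<1$, and to promote the resulting fixed point to the claimed regularity only afterwards. Everything else is essentially bookkeeping, once the analogues of the commutator and super-ellipticity identities already established in the paper are in hand.
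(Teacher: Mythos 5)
Your proposal is correct and follows essentially the same route as the paper: the contraction is carried out on the auxiliary variables $v_{k}=\sigma_{k}\cdot\nabla u$ in the norm $\sum_{k}\mathbb{E}\int_{0}^{T}\|v_{k}\|_{W^{2n,2}}^{2}\,dt$, with Lemma \ref{lemma super ellipticity} supplying the factor $\eta+C_{n,K}T<1$ for small $T$, and the argument is then iterated over intervals of fixed length. The only cosmetic difference is that you phrase existence as Picard iteration and prove uniqueness separately via the difference equation, whereas the paper invokes the contraction mapping principle directly; these are the same argument.
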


\begin{proof}
As above for the uniqueness proof, we consider the auxiliary equations%
\[
v_{k}\left(  t\right)  =\sigma_{k}\cdot\nabla e^{t\mathcal{L}}u_{0}+\sum_{h\in
K}\int_{0}^{t}\sigma_{k}\cdot\nabla e^{\left(  t-s\right)  \mathcal{L}}%
v_{h}\left(  s\right)  dW_{s}^{h}%
\]
and progressively measurable solutions $v=\left(  v_{k}\right)  _{k\in K}$
such that
\[
\left\Vert v\right\Vert _{2}^{2}:=\sum_{k\in K}\mathbb{E}\int_{0}%
^{T}\left\Vert v_{k}\left(  t\right)  \right\Vert _{W^{2n,2}}^{2}dt
\]
is finite. Given $u_{0}\in W^{2n,2}\left(  \mathbb{R}^{d}\right)  $, the terms
$\sigma_{k}\cdot\nabla e^{t\mathcal{L}}u_{0}$ form a vector with this
property, by Lemma \ref{lemma super ellipticity}. Then we apply the
contraction mapping principle to the equation for $v$ with the norm
$\left\Vert v\right\Vert _{2}$ above. Being linear, the key estimate is%
\begin{align*}
& \sum_{k\in K}\mathbb{E}\int_{0}^{T}\left\Vert \sum_{h\in K}\int_{0}%
^{t}\sigma_{k}\cdot\nabla e^{\left(  t-s\right)  \mathcal{L}}v_{h}\left(
s\right)  dW_{s}^{h}\right\Vert _{W^{2n,2}}^{2}ds\\
& =\sum_{h\in K}\mathbb{E}\int_{0}^{T}\sum_{k\in K}\int_{s}^{T}\left\Vert
\sigma_{k}\cdot\nabla e^{\left(  t-s\right)  \mathcal{L}}v_{h}\left(
s\right)  \right\Vert _{W^{2n,2}}^{2}dtds\\
& \leq\left(  \eta+C_{K}T\right)  \sum_{h\in K}\int_{0}^{T}\mathbb{E}\left[
\left\Vert v_{h}\left(  s\right)  \right\Vert _{W^{2n,2}}^{2}\right]  ds
\end{align*}
which, for $T$ small enough, allows one to apply the contraction principle.
The argument can be repeated on intervals of constant length, proving
existence and uniqueness. 
\end{proof}

Collecting the previous results we have:

\begin{theorem}\label{thm:reg-free}
Given $u_{0}\in W^{2n,2}\left(  \mathbb{R}^{d}\right)  $ for some $n\geq1$,
$u_{0}$ non negative with $\int u_{0}\left(  x\right)  dx=1$, define $\mu
_{0}\left(  dx\right)  =u_{0}\left(  x\right)  dx$. Then equation
(\ref{equation for measures}) has a unique solution $\mu_{t}\left(  dx\right)
$ according to Proposition \ref{proposition negative order}. It has a density
$u\left(  t,x\right)  $,
\[
\mu_{t}\left(  dx\right)  =u\left(  t,x\right)  dx
\]
which is a mild solution in $W^{2n,2}\left(  \mathbb{R}^{d}\right)  $ of
equation (\ref{mild regular}). In particular, if $n>d/4$, then $\left\Vert
u\left(  t,\cdot\right)  \right\Vert _{\infty}$ is finite a.s.
\end{theorem}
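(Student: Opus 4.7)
The plan is to combine the preceding two results: existence of a mild solution in $W^{2n,2}$ from the last proposition, and uniqueness of solutions to \eqref{equation for measures} in the class \eqref{regularity negative} from Proposition \ref{proposition negative order}. Concretely, I would produce the candidate density $u(t,x)$ as a mild solution, define $\nu_{t}(dx):=u(t,x)\,dx$, show that $\nu_{t}$ also solves the measure-valued equation, and invoke uniqueness to conclude $\mu_{t}=\nu_{t}$.

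First, since $u_{0}\in W^{2n,2}(\R^{d})$, the preceding proposition supplies the unique mild solution $u$ of \eqref{mild regular} with
\[
\sup_{t\in[0,T]}\E\bigl[\|u(t,\cdot)\|_{W^{2n,2}}^{2}\bigr]+\E\int_{0}^{T}\|u(t,\cdot)\|_{W^{2n+1,2}}^{2}\,dt<\infty.
\]
The next step is a standard mild-to-weak conversion. Given $\phi\in C_{c}^{1,2}([0,T]\times\R^{d})$, pair $\phi(t,\cdot)$ with the mild identity, apply a stochastic Fubini theorem to interchange the time integrals in the stochastic convolution term, use $\partial_{s}e^{(t-s)\mathcal{L}^{*}}\phi=-\mathcal{L}^{*}e^{(t-s)\mathcal{L}^{*}}\phi$ and integration by parts in time against $\phi(\cdot,x)$, and exploit $\operatorname{div}\sigma_{k}=0$ to move $\sigma_{k}\cdot\nabla$ onto $\phi$. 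This yields exactly the weak identity \eqref{equation for measures} with $\nu_{t}$ in place of $\mu_{t}$. Since $\nu_{t}$ (being $W^{0,2}$-valued, and $W^{0,2}\subset W^{-2N,2}$) satisfies the regularity \eqref{regularity negative}, the uniqueness in Proposition \ref{proposition negative order} forces $\mu_{t}=\nu_{t}=u(t,\cdot)\,dx$. In particular $\mu_{t}$ is absolutely continuous, and the non-negativity and unit mass of $u$ are automatic from those of $\mu_{t}$.

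For the final assertion, $n>d/4$ implies $2n>d/2$, so the Sobolev embedding $W^{2n,2}(\R^{d})\hookrightarrow C_{b}(\R^{d})$ yields $\|u(t,\cdot)\|_{\infty}\leq C\|u(t,\cdot)\|_{W^{2n,2}}$, and the right side is finite a.s.\ for each $t$ by the moment bound above.

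The main technical obstacle is the mild-to-weak step: one must rigorously justify the stochastic Fubini theorem and the duality pairings between $W^{2n,2}$, $W^{2n+1,2}$ and their test-function duals, verifying in particular that all terms arising after transposing $e^{(t-s)\mathcal{L}}$ against $\phi$ are integrable. This is of the same nature as the manipulations in the proof of Proposition \ref{proposition negative order}, and rests on the smoothing properties of $e^{t\mathcal{L}}$ together with Lemma \ref{lemma super ellipticity}; no new estimates beyond those already in the excerpt should be required.
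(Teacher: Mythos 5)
Your proposal is correct and follows essentially the same route as the paper: take the regular mild solution $u$ from the preceding proposition, observe that it is also a solution in the uniqueness class of Proposition \ref{proposition negative order}, and conclude $\mu_{t}(dx)=u(t,x)\,dx$ by that uniqueness, with the Sobolev embedding $W^{2n,2}\hookrightarrow C_{b}$ for $2n>d/2$ giving the final claim. The only cosmetic difference is that you identify the two solutions at the level of the weak measure formulation \eqref{equation for measures} (via a mild-to-weak conversion), whereas the paper identifies them at the level of the negative-order mild equation \eqref{H -32}; both pass through the same uniqueness statement.
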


\begin{proof}
The function $u_{0}\in W^{2n,2}\left(  \mathbb{R}^{d}\right)  $ is also of
class $u_{0}\in W^{-2N,2}\left(  \mathbb{R}^{d}\right)  $, and the
corresponding mild solution $u$ of class $W^{2n,2}\left(  \mathbb{R}%
^{d}\right)  $ is also a distributional solution in the sense of Proposition
\ref{proposition negative order}, hence it is the unique solution of that
negative order Sobolev class. A probability-measure solution $\mu_{t}\left(
dx\right)  $ exists as a subsequential limit of the empirical measures, with
initial condition $\mu_{0}\left(  dx\right)  =u_{0}\left(  x\right)  dx$,
hence by the uniqueness statement of Proposition
\ref{proposition negative order}, it coincides with $u$, in the sense of
distributions, which implies $\mu_{t}\left(  dx\right)  =u\left(  t,x\right)
dx$.
\end{proof}

The previous argument proves only that $\left\Vert u\left(  t,\cdot\right)
\right\Vert _{\infty}$ is finite. In the next section we prove that it is
uniformly bounded by a deterministic constant.

\subsection{Uniform upper bound}

\begin{lemma}\label{lemma uniform upper bound}
Let $u_{0}\in W^{2n,2}\left(  \mathbb{R}^{d}\right)  $ with $n>d/4$ and let
$C_{0}>0$ be a constant such that
\[
\left\Vert u_{0}\right\Vert _{\infty}\leq C_{0}.
\]
Then
\[
\left\Vert u\left(  t,\cdot\right)  \right\Vert _{\infty}\leq C_{0}%
\]
a.s. in all parameters.
\end{lemma}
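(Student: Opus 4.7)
The plan is to prove a stochastic maximum principle by applying Itô's formula to $\int \Phi(u)\,dx$ for a convex function $\Phi$ vanishing on $(-\infty, C_0]$, and exploiting the cancellation between the Itô correction of the stochastic transport term and the divergence-form operator $\tfrac{1}{2}\operatorname{div}(Q(x,x)\nabla\cdot)$ inside $\mathcal{L}$. Analytically, this cancellation is exactly the statement that Stratonovich transport by divergence-free vector fields is $L^{\infty}$-preserving.

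First I fix $\Phi\in C^{2}(\R)$ nonnegative and convex with $\Phi(y)=0$ for $y\le C_{0}$, $\Phi(y)>0$ for $y>C_{0}$, and with $\Phi'$ and $\Phi''$ bounded (e.g.\ a smoothing of $((y-C_{0})_{+})^{2}$ continued linearly for large $y$). I then mollify: $u^{\epsilon}(t,\cdot):=\rho_{\epsilon}\ast u(t,\cdot)$. Exactly as in the preparation for Lemma \ref{lemma regularization}, one derives that $u^{\epsilon}$ satisfies pointwise
\begin{align*}
du^{\epsilon} = \bigl(\mathcal{L}u^{\epsilon}+S^{\epsilon}\bigr)\,dt + \sum_{k\in K}\bigl(\sigma_{k}\cdot\nabla u^{\epsilon}+R_{k}^{\epsilon}\bigr)\,dW_{t}^{k},
\end{align*}
with $R_{k}^{\epsilon}$ the commutator of Lemma \ref{lemma commutator} and $S^{\epsilon}:=\rho_{\epsilon}\ast \mathcal{L}u-\mathcal{L}u^{\epsilon}$ the corresponding commutator for $\mathcal{L}$.

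Now I apply the scalar Itô formula to $\Phi(u^{\epsilon}(t,x))$ at each $x$, integrate over $\R^{d}$ via stochastic Fubini, and integrate by parts. Three elementary computations combine: (a) $\int \Phi'(u^{\epsilon})\sigma_{k}\cdot\nabla u^{\epsilon}\,dx = \int \sigma_{k}\cdot\nabla\Phi(u^{\epsilon})\,dx = 0$ by $\operatorname{div}\sigma_{k}=0$; (b) $\int \Phi'(u^{\epsilon})\mathcal{L}u^{\epsilon}\,dx = -\tfrac{\lambda^{2}}{2}\!\int \Phi''(u^{\epsilon})|\nabla u^{\epsilon}|^{2}\,dx - \tfrac{1}{2}\!\int \Phi''(u^{\epsilon})\langle Q(x,x)\nabla u^{\epsilon},\nabla u^{\epsilon}\rangle\,dx$; (c) the principal part of the Itô correction, $\tfrac{1}{2}\sum_{k}\!\int \Phi''(u^{\epsilon})(\sigma_{k}\cdot\nabla u^{\epsilon})^{2}\,dx$, equals $\tfrac{1}{2}\!\int \Phi''(u^{\epsilon})\langle Q(x,x)\nabla u^{\epsilon},\nabla u^{\epsilon}\rangle\,dx$ and cancels the second half of (b). Consequently
\begin{align*}
d\!\int\!\Phi(u^{\epsilon})\,dx = -\tfrac{\lambda^{2}}{2}\!\int\!\Phi''(u^{\epsilon})|\nabla u^{\epsilon}|^{2}\,dx\,dt + E_{t}^{\epsilon}\,dt + dM_{t}^{\epsilon},
\end{align*}
where $M^{\epsilon}$ is a real martingale and $E_{t}^{\epsilon}$ collects the residual commutator contributions $\int \Phi'(u^{\epsilon})S^{\epsilon}\,dx$, $\sum_{k}\!\int \Phi''(u^{\epsilon})(\sigma_{k}\cdot\nabla u^{\epsilon})R_{k}^{\epsilon}\,dx$ and $\tfrac{1}{2}\sum_{k}\!\int \Phi''(u^{\epsilon})(R_{k}^{\epsilon})^{2}\,dx$.

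Taking expectations, the martingale drops. The leading drift is nonpositive, the initial datum satisfies $\int \Phi(u^{\epsilon}(0))\,dx \to \int \Phi(u_{0})\,dx = 0$ since $u_{0}\le C_{0}$, and the goal is $\E[E_{s}^{\epsilon}]\to 0$ as $\epsilon\to 0$. Granting this, one obtains $\E\!\int \Phi(u(t,\cdot))\,dx\le 0$, and since $\Phi\ge 0$ with $\Phi>0$ on $\{y>C_{0}\}$ we conclude $u(t,x)\le C_{0}$ a.e.\ in $x$ and $\P$-a.s. The main obstacle is precisely the mixed term $\sum_{k}\!\int \Phi''(u^{\epsilon})(\sigma_{k}\cdot\nabla u^{\epsilon})R_{k}^{\epsilon}\,dx$: it involves $\nabla u^{\epsilon}$, whose $L^{2}$-norm is only \emph{uniformly bounded} in $\epsilon$ (via the space-time $W^{2n+1,2}$ bound supplied by Theorem \ref{thm:reg-free}). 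The resolution is to combine this uniform bound with the strong $L^{2}$ convergence $R_{k}^{\epsilon}\to 0$ (a standard DiPerna--Lions commutator statement, valid since $\sigma_{k}\in C_{b}^{\infty}$) and the boundedness of $\Phi''$: by Cauchy--Schwarz the product converges to $0$ in $L^{1}(\wh\Omega\times[0,T])$. The quadratic commutator $\int \Phi''(u^{\epsilon})(R_{k}^{\epsilon})^{2}\,dx$ and the $S^{\epsilon}$ term are handled by the same ingredients.
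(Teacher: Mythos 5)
Your proof is correct and follows essentially the same route as the paper's: a stochastic maximum principle obtained by applying It\^o's formula to a convex truncation of the solution, killing the transport martingale and the divergence-form term upon spatial integration via $\operatorname{div}\sigma_k=0$, and exploiting the exact cancellation between the It\^o correction $\tfrac12\sum_k\Phi''(u)(\sigma_k\cdot\nabla u)^2$ and the eddy-diffusion part of $\mathcal{L}$, leaving only the nonpositive $-\tfrac{\lambda^2}{2}\int\Phi''|\nabla u|^2$. The only difference is presentational: the paper applies It\^o directly to $\beta_\delta(u-C_0)$ pointwise, invoking the regularity of $u$ and explicitly omitting the intermediate computations, whereas you mollify and control the commutators $R_k^\epsilon$ and $S^\epsilon$ — which in effect makes rigorous precisely the steps the paper omits.
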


\begin{proof}
Let us prove only that
\[
u\left(  t,x\right)  \leq C_{0}.
\]
The proof that $u\left(  t,x\right)  \geq-C_{0}$ is similar. The first remark
is that the process identically equal to $C_{0}$ is a solution, in the sense
that it satisfies the weak formulation, with initial condition $C_{0}$. By
linearity, the process
\[
v\left(  t,x\right)  :=u\left(  t,x\right)  -C_{0}%
\]
also satisfies the weak formulation, with initial condition
\[
v_{0}\left(  x\right)  :=u_{0}\left(  x\right)  -C_{0}\leq0.
\]
Our thesis is that also%
\[
v\left(  t,x\right)  \leq0.
\]

Given $\delta>0$, let $\beta_{\delta}:\mathbb{R}\rightarrow\lbrack0,\infty)$
be a $C^{2}$-convex function such that $\beta_{\delta}\left(  r\right)  =0$
for $r<0$, $\beta_{\delta}\left(  r\right)  =r-\delta$ for $r>2\delta$. The
family $\beta_{\delta}$ converges uniformly to the function $\beta\left(
r\right)  =r1_{[0,\infty)}\left(  r\right)  $. From It\^{o} formula%
\[
d\beta_{\delta}\left(  v\left(  t,x\right)  \right)  =\beta_{\delta}^{\prime
}\left(  v\left(  t,x\right)  \right)  dv\left(  t,x\right)  +\frac{1}{2}%
\beta_{\delta}^{\prime\prime}\left(  v\left(  t,x\right)  \right)  d\left[
v\left(  \cdot,x\right)  \right]  _{t}.
\]
From a number of intermediate computations that we omit (better understood
formally at the level of Stratonovich calculus), we get%
\begin{align*}
d\beta_{\delta}\left(  v\left(  t,x\right)  \right)   & =\frac{\lambda^2}{2}%
\beta_{\delta}^{\prime}\left(  v\left(  t,x\right)  \right)  \Delta v\left(
t,x\right)  dt+\sum_{k\in K}\sigma_{k}\cdot\nabla\beta_{\delta}\left(
v\left(  t,x\right)  \right)  dW_{t}^{k}\\
& +\frac{1}{2}\operatorname{div}\left(  Q\left(  x,x\right)  \nabla\beta
_{\delta}\left(  v\left(  t,x\right)  \right)  \right)  dt
\end{align*}
interpreted of course in integral%
\begin{align*}
\beta_{\delta}\left(  v\left(  t,x\right)  \right)   & =\int_{0}^{t}\frac
{\lambda^2}{2}\beta_{\delta}^{\prime}\left(  v\left(  s,x\right)  \right)  \Delta
v\left(  s,x\right)  ds\\
& +\sum_{k\in K}\int_{0}^{t}\sigma_{k}\left(  x\right)  \cdot\nabla
\beta_{\delta}\left(  v\left(  s,x\right)  \right)  dW_{s}^{k}\\
& +\frac{1}{2}\int_{0}^{t}\operatorname{div}\left(  Q\left( x,x\right)
\nabla\beta_{\delta}\left(  v\left(  s,x\right)  \right)  \right)  ds
\end{align*}
(but pointwise in $x$, thanks to the regularity of $v$). We have neglected the
term $\beta_{\delta}\left(  v_{0}\left(  x\right)  \right)  $ because it is
zero, by definition of the objects. Now we want to integrate in $x$ this
identity. The first and second derivatives of $v$, being equal to those of $u
$, are integrable; hence all terms on the right-hand-side are integrable. The
function $v$ itself could not be integrable, becuase the constant $C_{0}$ is
not, hence $\beta_{\delta}\left(  v\left(  t,x\right)  \right)  $ a priori is
not integrable. However, it is integrable as a consequence of the identity. We
have also used stochastic Fubini theorem to deal with the stochastic term.

Taking into account that
\[
\int_{\mathbb{R}^{d}}\sigma_{k}\left(  x\right)  \cdot\nabla\beta_{\delta
}\left(  v\left(  s,x\right)  \right)  dx=0
\]
because $\operatorname{div}\sigma_{k}=0$ and
\[
\int_{\mathbb{R}^{d}}\operatorname{div}\left(  Q\left( x,x\right)  \nabla
\beta_{\delta}\left(  v\left(  s,x\right)  \right)  \right)  dx=0
\]
(in both cases we use Gauss-Green formula), we get
\[
\int_{\mathbb{R}^{d}}\beta_{\delta}\left(  v\left(  t,x\right)  \right)
dx=\frac{\lambda^2}{2}\int_{0}^{t}\int_{\mathbb{R}^{d}}\beta_{\delta}^{\prime}\left(
v\left(  s,x\right)  \right)  \Delta v\left(  s,x\right)  dxds.
\]
But%
\[
\int_{\mathbb{R}^{d}}\beta_{\delta}^{\prime}\left(  v\left(  s,x\right)
\right)  \Delta v\left(  s,x\right)  dx=-\int_{\mathbb{R}^{d}}\beta_{\delta
}^{\prime\prime}\left(  v\left(  s,x\right)  \right)  \left\vert \nabla
v\left(  s,x\right)  \right\vert ^{2}dx\leq0
\]
because, by convexity, $\beta_{\delta}^{\prime\prime}\left(  r\right)  \geq0$.
Therefore%
\[
\int_{\mathbb{R}^{d}}\beta_{\delta}\left(  v\left(  t,x\right)  \right)
dx\leq0.
\]
Since the function $\beta_{\delta}$ is non-negative, we deduce
\[
\beta_{\delta}\left(  v\left(  t,x\right)  \right)  =0
\]
a.s. and thus, taking the limit as $\delta\rightarrow0$, $v\left(  t,x\right)
\leq0$ a.s., completing the proof.
\end{proof}

\subsection{Proof of Proposition \ref{Proposition regularity nonlinear}}

The proof consists in two main steps\ plus a few remarks. First we show that a
modified version of the system for $\left(  u_{m}\right)  _{m=1,...,M}$ has a
global solution in the regularity class specified by Proposition
\ref{Proposition regularity nonlinear}; this is Step 1 below. Then, in Step 2,
we connect this regular solution with the one provided by Corollary
\ref{Coroll uniqueness}: we show they are the same, hence the solution of
Corollary \ref{Coroll uniqueness} has the regularity stated by Proposition
\ref{Proposition regularity nonlinear}.

\textbf{Step 1}. Consider the auxiliary system%
\begin{align*}
u_{m}\left(  t\right)   &  =e^{t\mathcal{L}}u_{m}\left(  0\right)  +\int%
_{0}^{t}e^{\left(  t-s\right)  \mathcal{L}}\widetilde{F}_{m}\left(  u\left(
s\right)  \right)  ds+\sum_{k\in K}\int_{0}^{t}e^{\left(  t-s\right)
\mathcal{L}}v_{m,k}\left(  s\right)  dW_{s}^{k}\\
v_{m,k}\left(  t\right)   &  =\sigma_{k}\cdot\nabla e^{t\mathcal{L}}%
u_{m}\left(  0\right)  +\int_{0}^{t}\sigma_{k}\cdot\nabla e^{\left(
t-s\right)  \mathcal{L}}\widetilde{F}_{m}\left(  u\left(  s\right)  \right)
ds+\sum_{h\in K}\int_{0}^{t}\sigma_{k}\cdot\nabla e^{\left(  t-s\right)
\mathcal{L}}v_{m,h}\left(  s\right)  dW_{s}^{h}\\
u  &  =\left(  u_{1},...,u_{M}\right)
\end{align*}
with $m=1,...,M$, $k\in K$ and $\widetilde{F}_{m}$ defined as follows. Recall
that $0\leq u_{m}\left(  0\right)  \leq U$. Taken a smooth compact support
function $\chi:\mathbb{R}\rightarrow\mathbb{R}$ such that
\[
\chi\left(  a\right)  =a\text{ for }\left\vert a\right\vert \leq U+1,
\]
we define $\widetilde{F}_{m}$ as%
\[
\widetilde{F}_{m}\left(  u\left(  t,x\right)  \right)  =\sum_{n=1}^{m-1}%
\chi\left(  u_{n}\left(  t,x\right)  \right)  \chi\left(  u_{m-n}\left(
t,x\right)  \right)  -2\chi\left(  u_{m}\left(  t,x\right)  \right)
\sum_{n=1}^{M}\chi\left(  u_{n}\left(  t,x\right)  \right)  .
\]

Given $T>0$, consider the space $\mathcal{X}_{T}$ of progressively measurable
processes $u_{m},v_{m,k}$ in $W^{2n,2}\left(  \mathbb{R}^{d}\right)  $ such
that
\begin{align*}
\left\Vert \left(  u_{m},v_{m,k}\right)  _{m=1,...,M,k\in K}\right\Vert
_{\mathcal{X}_{T}}^{2}  &  :=\sup_{t\in\left[  0,T\right]  }\sum_{m=1}%
^{M}\mathbb{E}\left[  \left\Vert u_{m}\left(  t,\cdot\right)  \right\Vert
_{W^{2n,2}}^{2}\right] \\
&  +\sum_{m=1}^{M}\sum_{k\in K}\mathbb{E}\int_{0}^{T}\left\Vert v_{m,k}\left(
t,\cdot\right)  \right\Vert _{W^{2n+1,2}}^{2}dt
\end{align*}
is finite; the space $\mathcal{X}_{T}$ with the norm $\left\Vert
\cdot\right\Vert _{\mathcal{X}_{T}}$ is a Banach space. In it, let us define
the map $\Gamma_{T}$ as follows. We write an element $\left(  u_{m}%
,v_{m,k}\right)  _{m=1,...,M,k\in K}$ of $\mathcal{X}_{T}$ in the form
$\left(  u,v\right)  $, $u=\left(  u_{m}\right)  _{m=1,...,M}$, $v=\left(
v_{m,k}\right)  _{m=1,...,M,k\in K}$, and similarly we write $\Gamma
_{T}\left(  u,v\right)  $ in the form $\left(  \Gamma_{T}^{\left(  1\right)
}\left(  u,v\right)  ,\Gamma_{T}^{\left(  2\right)  }\left(  u,v\right)
\right)  $, with components $\Gamma_{T}^{\left(  1\right)  }\left(
u,v\right)  _{m}$, $\Gamma_{T}^{\left(  2\right)  }\left(  u,v\right)  _{m,k}%
$, $m=1,...,M,k\in K$, given by%
\[
\Gamma_{T}^{\left(  1\right)  }\left(  u,v\right)  _{m}\left(  t\right)
:=e^{t\mathcal{L}}u_{m}\left(  0\right)  +\int_{0}^{t}e^{\left(  t-s\right)
\mathcal{L}}\widetilde{F}_{m}\left(  u\left(  s\right)  \right)  ds+\sum_{k\in
K}\int_{0}^{t}e^{\left(  t-s\right)  \mathcal{L}}v_{m,k}\left(  s\right)
dW_{s}^{k}%
\]%
\begin{align*}
\Gamma_{T}^{\left(  2\right)  }\left(  u,v\right)  _{m,k}\left(  t\right)   &
:=\sigma_{k}\cdot\nabla e^{t\mathcal{L}}u_{m}\left(  0\right)  +\int_{0}%
^{t}\sigma_{k}\cdot\nabla e^{\left(  t-s\right)  \mathcal{L}}\widetilde{F}%
_{m}\left(  u\left(  s\right)  \right)  ds\\
&  +\sum_{h\in K}\int_{0}^{t}\sigma_{k}\cdot\nabla e^{\left(  t-s\right)
\mathcal{L}}v_{m,h}\left(  s\right)  dW_{s}^{h}.
\end{align*}
By Lemma \ref{lemma super ellipticity}, $\Gamma_{T}$ maps $\mathcal{X}_{T}$
into itself; the proof is similar to the computation done below to prove the
contraction property and thus it is not duplicated. We only remark a property
of $\widetilde{F}_{m}$ for the purpose of proving that $\Gamma_{T}$ maps
$\mathcal{X}_{T}$ into itself: since $\chi$ is smooth and compact support,
hence having all derivatives of every order bounded, $\chi\left(  u_{m}\left(
t,x\right)  \right)  \chi\left(  u_{n}\left(  t,x\right)  \right)  $ is of
class
\begin{equation}
C\left(  \left[  0,T\right]  ;L^{2}\left(  \Omega;W^{2n,2}\left(
\mathbb{R}^{d}\right)  \right)  \right)  \cap L^{2}\left(  \left[  0,T\right]
\times\Omega;W^{2n+1,2}\left(  \mathbb{R}^{d}\right)  \right) \label{class}%
\end{equation}
for every $u_{n},u_{m}$ of the same class.

Given two input functions $\left(  u,v\right)  $, $\left(  u^{\prime
},v^{\prime}\right)  $, with the same initial values $u_{m}\left(  0\right)
$, we have%
\begin{align*}
\Gamma_{T}^{\left(  1\right)  }\left(  u,v\right)  _{m}\left(  t\right)
-\Gamma_{T}^{\left(  1\right)  }\left(  u^{\prime},v^{\prime}\right)
_{m}\left(  t\right)   &  =\int_{0}^{t}e^{\left(  t-s\right)  \mathcal{L}%
}\left(  \widetilde{F}_{m}\left(  u\left(  s\right)  \right)  -\widetilde{F}%
_{m}\left(  u^{\prime}\left(  s\right)  \right)  \right)  ds\\
&  +\sum_{k\in K}\int_{0}^{t}e^{\left(  t-s\right)  \mathcal{L}}\left(
v_{m,k}\left(  s\right)  -v_{m,k}^{\prime}\left(  s\right)  \right)
dW_{s}^{k}%
\end{align*}%
\begin{align*}
\Gamma_{T}^{\left(  2\right)  }\left(  u,v\right)  _{m,k}\left(  t\right)
-\Gamma_{T}^{\left(  2\right)  }\left(  u^{\prime},v^{\prime}\right)
_{m,k}\left(  t\right)   &  =\int_{0}^{t}\sigma_{k}\cdot\nabla e^{\left(
t-s\right)  \mathcal{L}}\left(  \widetilde{F}_{m}\left(  u\left(  s\right)
\right)  -\widetilde{F}_{m}\left(  u^{\prime}\left(  s\right)  \right)
\right)  ds\\
&  +\sum_{h\in K}\int_{0}^{t}\sigma_{k}\cdot\nabla e^{\left(  t-s\right)
\mathcal{L}}\left(  v_{m,h}\left(  s\right)  -v_{m,h}^{\prime}\left(
s\right)  \right)  dW_{s}^{h}.
\end{align*}
Let us develop the estimates for the second line, the first one being similar
and a little bit easier. We have, from $\left(  a+b\right)  ^{2}\leq\left(
1+\frac{1}{\epsilon}\right)  a^{2}+\left(  1+\epsilon\right)  b^{2}$,
\begin{align*}
&  \sum_{m=1}^{M}\sum_{k\in K}\mathbb{E}\int_{0}^{T}\left\Vert \Gamma
_{T}^{\left(  2\right)  }\left(  u,v\right)  _{m,k}\left(  t\right)
-\Gamma_{T}^{\left(  2\right)  }\left(  u^{\prime},v^{\prime}\right)
_{m,k}\left(  t\right)  \right\Vert _{W^{2n+1,2}}^{2}dt\\
&  \leq\left(  1+\frac{1}{\epsilon}\right)  T\sum_{m=1}^{M}\sum_{k\in
K}\mathbb{E}\int_{0}^{T}\int_{s}^{T}\left\Vert \sigma_{k}\cdot\nabla
e^{\left(  t-s\right)  \mathcal{L}}\left(  \widetilde{F}_{m}\left(  u\left(
s\right)  \right)  -\widetilde{F}_{m}\left(  u^{\prime}\left(  s\right)
\right)  \right)  \right\Vert _{W^{2n+1,2}}^{2}dtds\\
&  +\left(  1+\epsilon\right)  \sum_{m=1}^{M}\sum_{k\in K}\mathbb{E}\int%
_{0}^{T}\sum_{h\in K}\int_{s}^{T}\left\Vert \sigma_{k}\cdot\nabla e^{\left(
t-s\right)  \mathcal{L}}\left(  v_{m,h}\left(  s\right)  -v_{m,h}^{\prime
}\left(  s\right)  \right)  \right\Vert _{W^{2n+1,2}}^{2}dtds
\end{align*}
where we have used also Fubini-Tonelli theorem. We apply Lemma
\ref{lemma super ellipticity} to both terms and get%
\begin{align*}
&  \leq\left(  1+\frac{1}{\epsilon}\right)  T\left(  \eta+C_{K}T\right)
\sum_{m=1}^{M}\mathbb{E}\int_{0}^{T}\left\Vert \widetilde{F}_{m}\left(
u\left(  s\right)  \right)  -\widetilde{F}_{m}\left(  u^{\prime}\left(
s\right)  \right)  \right\Vert _{W^{2n,2}}^{2}ds\\
&  +\left(  1+\epsilon\right)  \left(  \eta+C_{K}T\right)  \sum_{m=1}%
^{M}\mathbb{E}\int_{0}^{T}\sum_{h\in K}\left\Vert v_{m,h}\left(  s\right)
-v_{m,h}^{\prime}\left(  s\right)  \right\Vert _{W^{2n,2}}^{2}ds.
\end{align*}
Using as above the fact that all derivatives of every order of $\chi$ are
bounded, we get%
\begin{align*}
&  \leq C_{\chi}\left(  1+\frac{1}{\epsilon}\right)  T\left(  \eta
+C_{K}T\right)  \sum_{m=1}^{M}\mathbb{E}\int_{0}^{T}\left\Vert u_{m}\left(
s\right)  -u_{m}^{\prime}\left(  s\right)  \right\Vert _{W^{2n,2}}^{2}ds\\
&  +\left(  1+\epsilon\right)  \left(  \eta+C_{K}T\right)  \sum_{m=1}%
^{M}\mathbb{E}\int_{0}^{T}\sum_{h\in K}\left\Vert v_{m,h}\left(  s\right)
-v_{m,h}^{\prime}\left(  s\right)  \right\Vert _{W^{2n,2}}^{2}ds.
\end{align*}
Since $\eta<1$, if $T$ is small enough we deduce that $\Gamma_{T}$ is a
contraction in $\mathcal{X}_{T}$. Therefore it has a unique fixed point. Since
the size of $T$ to get this result is not related to the size of the initial
condition, the procedure can be repeated on intervals of constant length. We
deduce that there is a unique solution in $\mathcal{X}_{T}$ with $T$
arbitrarily large and a priori chosen.

Let $\left(  u_{m},v_{m,k}\right)  _{m=1,...,M,k\in K}$ be the unique solution
of the system above. From the first $M$ identities of the system (those for
$u_{m}$, $m=1,...,M$), we see that $\sigma_{k}\cdot\nabla u_{m}$ is well
defined in
\[
L^{2}\left(  \left[  0,T\right]  \times\Omega;W^{2n,2}\left(  \mathbb{R}%
^{d}\right)  \right)
\]
and we have%
\begin{align*}
\sigma_{k}\cdot\nabla u_{m}\left(  t\right)   &  =\sigma_{k}\cdot\nabla
e^{t\mathcal{L}}u_{m}\left(  0\right)  +\int_{0}^{t}\sigma_{k}\cdot\nabla
e^{\left(  t-s\right)  \mathcal{L}}\widetilde{F}_{m}\left(  u\left(  s\right)
\right)  ds\\
&  +\sum_{h\in K}\int_{0}^{t}\sigma_{k}\cdot\nabla e^{\left(  t-s\right)
\mathcal{L}}v_{m,h}\left(  s\right)  dW_{s}^{h}.
\end{align*}
But this is equal to $v_{m,k}\left(  t\right)  $, by the second group of
equations of the system. Hence we may replace $v_{m,h}\left(  s\right)  $ by
$\sigma_{h}\cdot\nabla u_{m}\left(  s\right)  $ in the first group of
equations and get the identity%
\begin{equation}
u_{m}\left(  t\right)  =e^{t\mathcal{L}}u_{m}\left(  0\right)  +\int_{0}%
^{t}e^{\left(  t-s\right)  \mathcal{L}}\widetilde{F}_{m}\left(  u\left(
s\right)  \right)  ds+\sum_{k\in K}\int_{0}^{t}e^{\left(  t-s\right)
\mathcal{L}}\sigma_{k}\cdot\nabla u_{m}\left(  s\right)  dW_{s}^{k}%
.\label{modified mild}%
\end{equation}
Therefore $\left(  u_{m}\right)  _{m=1,...,M}$ is a solution of this mild
system, of class (\ref{class}).

Now let us use Theorem 6.10 of \cite{DPZ} (the semigroup $e^{t\mathcal{L}}
$ is of contraction type, being also a Markov semigroup) to deduce that
$u_{m}$ has continuous paths in $W^{2n,2}\left(  \mathbb{R}^{d}\right)  $,
precisely
\[
u_{m}\in L^{2}\left(  \Omega;C\left(  \left[  0,T\right]  ;W^{2n,2}\left(
\mathbb{R}^{d}\right)  \right)  \right)  .
\]
This completes the proof that there exists a solution, unique, with the
regularity specified in Proposition \ref{Proposition regularity nonlinear};
however it is the solution of a modified system, with $\widetilde{F}_{m}$ in
place of $F_{m}$.

\textbf{Step 2}. Let $\left(  u_{m}^{0}\right)  _{m=1,...,M}$ (we use a new
notation to avoid confution) the solution of the original system introduced in
Definition \ref{def:spde-w} and proved to be unique by Corollary
\ref{Coroll uniqueness}. Since each $u_{m}^{0}\left(  t\right)  $ take values
in $\left[  0,U\right]  $, we have $\chi\left(  u_{m}^{0}\left(  t,x\right)
\right)  =u_{m}^{0}\left(  t,x\right)  $ and then $\left(  u_{m}^{0}\right)
_{m=1,...,M}$ is also a solution, in the space%
\begin{equation}
C\left(  \left[  0,T\right]  ;L^{2}\left(  \Omega;L^{2}\left(  \mathbb{R}%
^{d}\right)  \right)  \right)  \cap L^{2}\left(  \left[  0,T\right]
\times\Omega;W^{1,2}\left(  \mathbb{R}^{d}\right)  \right)
\label{basic regularity}%
\end{equation}
of equation (\ref{modified mild}) (originally it is a solution in the weak
sense, but the passage from the weak formulation to the mild formulation, in
the regularity class (\ref{basic regularity}), is standard, see for instance
Proposition 6.3 of \cite{DPZ}). Let $\left(  u_{m}\right)  _{m=1,...,M}$ be
the smoother solution given by Step 1 above. It satisfies the same equation
(\ref{modified mild}), and it has the regularity (\ref{basic regularity}).
Hence $\left(  u_{m}^{0}\right)  _{m=1,...,M}=\left(  u_{m}\right)
_{m=1,...,M}$ (fact that completes the proof of Proposition
\ref{Proposition regularity nonlinear}) if we prove that equation
(\ref{modified mild}) has a unique solution in the class
(\ref{basic regularity}). But the proof of this fact is identical to the one
of Step 1 above, based on the inequality of Lemma
\ref{lemma super ellipticity} which holds (first of all) for $n=0$.

Authors address: Scuola Normale Superiore di Pisa. Piazza Dei Cavalieri 7. Pisa PI 56126. Italia. 

E-mails: \{franco.flandoli, ruojun.huang\}@ sns.it.
\end{document}